\DeclareSymbolFont{cyrletters}{OT2}{wncyr}{m}{n}
\DeclareMathSymbol{\Sha}{\mathalpha}{cyrletters}{"58}
 \numberwithin{equation}{section}
\newcommand{\nc}{\newcommand}
\nc{\nt}{\newtheorem}
\nc{\dmo}{\DeclareMathOperator}
\nc{\enm}{\ensuremath}
\newtheorem{thm}{Theorem}
\newtheorem{prop}{Proposition}
\newtheorem{lemma}{Lemma}
\newtheorem{cor}{Corollary}
\dmo{\Ind}{Ind}
\dmo{\cInd}{c-Ind}
\dmo{\Adj}{Ad}
\dmo{\PGL}{PGL}
\dmo{\SO}{SO}
\dmo{\Lie}{Lie}
\dmo{\Alt}{Alt}
\dmo{\reg}{reg}
\dmo{\sing}{sing}
\dmo{\supp}{supp}
\dmo{\tr}{tr}
\dmo{\Sym}{Sym}
\dmo{\Hom}{Hom}
\dmo{\Tor}{Tor}
\dmo{\Out}{Out}
\dmo{\Ht}{ht}
\dmo{\End}{End}
\dmo{\Mat}{Mat}
\dmo{\Tr}{Tr}
\dmo{\Isom}{Isom}
\dmo{\Span}{Span}
\dmo{\SL}{SL} \dmo{\sgn}{sgn} \dmo{\GL}{GL}  \dmo{\Mod}{mod} \dmo{\geo}{geo} \dmo{\re}{Re} \dmo{\Spec}{Spec}
\dmo{\Fr}{Fr} \dmo{\vol}{vol} \dmo{\Sets}{Sets} \dmo{\im}{im} \dmo{\diag}{diag} \dmo{\Ker}{Ker} \dmo{\val}{val} \dmo{\ord}{ord}
\dmo{\Stab}{Stab} \dmo{\Ad}{Ad} \dmo{\rank}{rank} \dmo{\Symp}{Sp} \dmo{\Nm}{Nm} \dmo{\Norm}{Norm}
\nc{\bG}{\enm{{\mathbf G}}}
\nc{\Aff}{\mathbb{A}}
\nc{\eps}{\varepsilon}
\nc{\ups}{\upsilon}
\nc{\bks}{\enm{{\backslash}}}
 \nc{\isom}{\enm{{\overset{~}{\rightarrow}}}}
 \nc{\Z}{\enm{{\mathbb Z}}}
\nc{\Zp}{\enm{{\mathbb Z_p}}}
\nc{\Gm}{\enm{{\mathbb G_m}}}
\nc{\F}{\enm{{\mathbb F}}}
\nc{\Fp}{\enm{{\mathbb F}_p}}
\nc{\Fq}{\enm{{\mathbb F}_q}}
\nc{\Q}{\enm{{\mathbb Q}}}
\nc{\Qp}{\enm{{\mathbb Q_p}}}
\nc{\R}{\enm{{\mathbb R}}}
\nc{\N}{\enm{{\mathbb N}}}
\nc{\C}{\enm{{\mathbb C}}}
\nc{\CC}{\enm{{\mathcal C}}}
\nc{\half}{\enm{{\frac{1}{2}}}}
\nc{\BB}{\enm{{\mathcal B}}}
\nc{\flip}{\tilde{\eps}}
\nc{\ii}{\enm{{\mathcal I}}}
\nc{\jj}{\enm{{\mathcal J}}}
\nc{\OO}{\enm{{\mathcal O}}}
\nc{\f}{\enm{{\mathcal F}}}
\nc{\GGl}{\enm{{\mathfrak gl}}}
\nc{\GG}{\enm{{\mathfrak g}}}
\nc{\gd}{\enm{{\hat{\mathfrak g}}}}
\nc{\gm}{\enm{{\gamma}}}
\nc{\hh}{\enm{{\mathfrak h}}}
\nc{\II}{\enm{{\mathfrak a}}}
\nc{\LL}{\enm{{\mathfrak l}}}
\nc{\mm}{\enm{{\mathfrak m}}}
\nc{\pp}{\enm{{\mathfrak p}}}
\nc{\TT}{\enm{{\mathfrak t}}}
\nc{\Nc}{\enm{{\mathcal N}}}
\nc{\Cc}{\enm{{\mathcal C}}}
\nc{\HH}{\enm{{\mathfrak h}}}
\nc{\LS}{\enm{{\mathfrak s}}}
\nc{\iso}{\tilde{\rightarrow}}
\dmo{\res}{res}
 \nc{\Gd}{\enm{{\hat{G}}}}
  \nc{\Hd}{\enm{{\hat{H}}}}
\nc{\vt}{\enm{\vartheta}}
\nc{\lra}{\enm{\longrightarrow}}
\nc{\ra}{\enm{\rightarrow}}
\nc{\lip}{\enm{\langle}}
\nc{\rip}{\enm{\rangle}}
\nc{\nn}{\enm{\mathfrak n}}
\nc{\bsk}{\bigskip}
 \nc{\ol}{\overline}
\nc{\ul}{\underline}
\nc{\bH}{{\bf H}}
\nc{\bS}{{\bf S}}
\nc{\bT}{{\bf T}}
\nc{\bB}{{\bf B}}
\nc{\bA}{{\bf A}}
\nc{\bU}{{\bf U}}
\nc{\bN}{{\bf N}}
\nc{\bE}{{\bf E}}
\nc{\bX}{{\bf X}}
\nc{\bM}{{\bf M}}
\nc{\bW}{{\bf W}}
\author{Arnab Mitra}
\author{Steven Spallone}
\address{Tata Institute of Fundamental Research, Colaba, Mumbai-400005, India}
\email{00.arnab.mitra@gmail.com}
\address{Indian Institute of Science Education and Research, Pune-411021, India}
\email{sspallone@gmail.com}
\keywords{Integration formula, maximal parabolic, unipotent radical, Langlands-Shahidi method, intertwining operator}
\subjclass{Primary 22E35, Secondary 22E50}
\begin{document}

\title [Unipotent Radicals] {An Integration Formula for Unipotent Radicals}
\maketitle

\begin{abstract}
Let $P=MN$ be a maximal parabolic of a classical group over a field $F$.  Then the Levi subgroup $M$ is isomorphic to the product of a classical group and a general linear group, acting on vector spaces $X$ and $W$, respectively.  In this paper we decompose the unipotent radical $N$ of $P$ under the adjoint action of $M$, assuming $\dim W \leq \dim X$ and that $\dim W$ is even.  When $F$ is a local field, we obtain a Weyl-type integration formula for $N$.

\end{abstract}

\section{Introduction}   
Let $F$ be a field.  Let $V$ be a finite dimensional inner product space over $F$ which is either orthogonal or symplectic.   Let $G^1$ be the isometry group of $V$.

 Let $P=MN$ be a maximal parabolic subgroup of $G^1$.  Here $N$ is the unipotent radical of $P$, and $M$ is a Levi component of $P$.  Then $M$ is isomorphic to a product of groups $G \times H$, where $G=\GL(W)$ for an isotropic subspace $W$ of $V$, and $H$ is the group of isometries of a nondegenerate subspace $X$ of $V$.  Roughly speaking, our goal is to provide a decomposition of $N$, or rather of a dense open subset $N_{\reg}$, under the adjoint action of $M$.   
In this paper we assume that $\dim W \leq \dim X$ and that   $\dim W$ is even.  
 
Goldberg and Shahidi define in \cite{GS98} a map which we write as $\Norm: N' \to H$, with $N'$ a certain open subset of $N$.  This map sends $\Ad(M)$-orbits in $N'$ to conjugacy classes in $H$.  
(Originally Shahidi \cite{S95} introduced this map in the even orthogonal case.)  The image is contained in the subset 
\begin{equation*}
H_k= \{ h \in H \mid \rank(h-1; X) \leq k \}.
\end{equation*}
  
Fix a torus $S$ in $H_k$ of dimension $\frac{k}{2}$.   We refer to such tori as ``maximal $k$-tori''.  We write $S_r$ for suitably regular elements of $S$, and $H^S$ for the conjugates of $S_r$.  We demonstrate that $H^S$ is (Zariski) open in $H_k$.   We write $E=X^S$ for the fixed point space of $S$, and then $E^\circ$ for the orthogonal complement of $E$ in $X$.  Let $H^E$ be the subgroup of $H$ which fixes $E$ pointwise.  By choosing an isomorphism from $E^\circ$ to $W$, we can identify $H^E$ with the fixed points $G^\theta$ of an involution $\theta$. 
 
Given $\gm \in S_r$, there is a matching semisimple element $\gm_G \in G$ with the property that $\gm_G \cdot \theta(\gm_G)=-\gm^{-1}$, where the right hand side is viewed as an element of $G^\theta$.  (Compare \cite{KS}.)  A variant of $\gm_S$ gives an element $n_S(\gm)$ with $\Norm(n_S(\gm))=\gm$.  The reader will find typical $n_S(\gm)$ written out in matrix form in Section \ref{MatrixSection}.

We obtain from this a Zariski dense open subset
\begin{equation} \label{first}
N_{\reg}= \bigcup_S \{ \Ad(m) n_S(\gm) \mid m \in M, \gm \in S_r \}
\end{equation}
of $N$.  The union is over maximal $k$-tori $S$ of $H$.

Next, suppose $F$ is a local field.  For $m \in M$, put $\delta_N(m)=\det(\Ad(m); \Lie(N))$.
Then the decomposition (\ref{first}) of $N_{\reg}$ leads to the measure-space decomposition:

\begin{equation*}
\int_N f(n) dn= \sum_S |W_H(S)|^{-1} \int_S |\delta(\gm)| \int_{M/{\Delta_S}} f(\Ad(m)n_S(\gm)) |\delta_N(m)| \frac{dm}{dz} d \gm.
\end{equation*}
 The sum is over the $H$-conjugacy classes of maximal $k$-tori $S$ in $H$.  The subgroup $\Delta_S$ is the stabilizer in $M$ of $n_S(\gm)$ for $\gm \in S_r$.
 As usual, $W_H(S)$ denotes the Weyl group of $S$ in $H$.  All measures are conveniently normalized Haar measures.  The product $|\delta(\gm) \delta_N(m)|$ is the Jacobian of the map
\begin{equation*} 
\Sha_{S}: M/\Delta_S \times S_{r}\to N
\end{equation*}
given by $\Sha_S(m \times \gm)=\Ad(m)n_S(\gm)$; we compute this Jacobian explicitly in Section \ref{routes}.
 
One can eliminate the factor $|\delta_N(m)|$ from the right-hand side by replacing $dn$ with an $\Ad(M)$-invariant measure $d_mn$ (specified in Section \ref{Ad-invt}).
The result is:
 
\begin{thm} \label{intro_thm}
Suppose that $\dim W \leq \dim X$, and that $\dim W$ is even.  Let $f \in L^1(N,d_mn)$.  Then 
\begin{equation*}
\int_N f(n) d_mn=\sum_{S}|W_H(S)|^{-1}\int_{S}D_{\Sha}(\gm) \int_{M/{\Delta_S}}f(\Ad(m)n_S(\gamma))\frac{dm}{dz}d\gamma,
\end{equation*}

\begin{equation*}
 \text{where } D_{\Sha}(\gm)=|D_{H^E}(\gm)|^\half |D^\theta_G(\gm_G)|^\half |\det(\gm-1;E^\circ)|^{\half \dim E}.
 \end{equation*}
\end{thm}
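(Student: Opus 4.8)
The plan is to identify Theorem~\ref{intro_thm} with the change-of-variables formula for the family of parametrizations $\Sha_S \colon M/\Delta_S \times S_r \to N$ of~(\ref{first}), and then to absorb the Jacobian factor $|\delta_N(m)|$ into the measure on $N$. The first task is to promote the set-theoretic identity~(\ref{first}) to a decomposition valid up to a null set and with controlled multiplicities, which I would do in three steps. (a) The complement $N \setminus N_{\reg}$ is a proper Zariski-closed subset, so it is Haar-null and may be ignored. (b) For two distinct $H$-conjugacy classes $[S] \ne [S']$ of maximal $k$-tori, the images of $\Sha_S$ and $\Sha_{S'}$ meet only in a null set: applying $\Norm$ to a common point would produce an element of $H$ conjugate into both $S_r$ and $S'_r$, and a suitably regular element of a maximal $k$-torus determines that torus up to $H$-conjugacy, forcing $[S]=[S']$. (c) Over $N_{\reg}$ the map $\Sha_S$ is everywhere $|W_H(S)|$-to-one: if $\Ad(m)n_S(\gm)=\Ad(m')n_S(\gm')$, then applying $\Norm$ and using that it carries $\Ad(M)$-orbits to $H$-conjugacy classes (see \cite{GS98}) shows $\gm$ and $\gm'$ are $H$-conjugate; being suitably regular in $S$ they differ by a unique $w \in W_H(S)$, and after replacing $m'$ by its product with a lift of $w$ the equation reduces to $m^{-1}m'' \in \Delta_S$. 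Thus the fibers of $\Sha_S$ over $N_{\reg}$ are exactly torsors under $W_H(S)$.

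The second task is to compute the Jacobian of $\Sha_S$, whose source and target have equal dimension --- one checks $\dim M - \dim \Delta_S + \tfrac{k}{2} = \dim N$, which in any case follows from the density of $N_{\reg}$. Because $\Sha_S(gm,\gm) = \Ad(g)\,\Sha_S(m,\gm)$, its differential at $(m,\gm)$ equals $\Ad(m)$ composed with its differential at $(e,\gm)$ once the tangent spaces are translated compatibly with the left-invariance of $\tfrac{dm}{dz}$; the $\Ad(m)$-factor contributes $\det(\Ad(m);\Lie(N)) = \delta_N(m)$, and the residual map $\Lie(M)/\Lie(\Delta_S) \oplus T_\gm S_r \to \Lie(N)$ has Jacobian $D_{\Sha}(\gm)$ in absolute value. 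This residual computation is carried out in Section~\ref{routes}, where the three factors of $D_{\Sha}(\gm)$ arise: $|D_{H^E}(\gm)|^{\half}$ from the directions tangent to $H^E$, $|D^\theta_G(\gm_G)|^{\half}$ from the $G$-directions seen through the matching element $\gm_G$ and the involution $\theta$, and $|\det(\gm-1;E^\circ)|^{\half \dim E}$ from the remaining ``mixed'' directions attached to $E^\circ \cong W$. I expect this explicit linear-algebra computation --- together with the precise identification of $\Lie(\Delta_S)$ that it requires --- to be the \textbf{main obstacle}; the remainder of the argument is measure-theoretic bookkeeping.

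Finally I would assemble the pieces. With (a)--(c) and the Jacobian in hand, the change-of-variables formula gives, first for measurable $f \ge 0$,
\begin{equation*}
\int_N f(n)\,dn = \sum_S |W_H(S)|^{-1}\int_S D_{\Sha}(\gm)\int_{M/\Delta_S} f(\Ad(m)n_S(\gm))\,|\delta_N(m)|\,\frac{dm}{dz}\,d\gm,
\end{equation*}
where the sum runs over $H$-conjugacy classes of maximal $k$-tori and Tonelli's theorem licenses the iterated integral; the case of general $f \in L^1(N,d_m n)$ then follows by applying this to $|f|$ and splitting into positive and negative parts. By the construction of the $\Ad(M)$-invariant measure $d_m n$ in Section~\ref{Ad-invt} --- characterized by the property that its pullback along each orbit map $m \mapsto \Ad(m)n_S(\gm)$, $M/\Delta_S \to N$, is the invariant measure $\tfrac{dm}{dz}$, whereas that of $dn$ is $|\delta_N(m)|\,\tfrac{dm}{dz}$ --- replacing $dn$ by $d_m n$ on the left removes the factor $|\delta_N(m)|$ from the right and yields precisely the formula of Theorem~\ref{intro_thm}. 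That $d_m n$ is well defined relies on $\delta_N$ being trivial on each $\Delta_S$, a point settled in Section~\ref{Ad-invt}.
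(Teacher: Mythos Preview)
Your overall architecture matches the paper's, and steps (a)--(c) are essentially the paper's Theorem~\ref{Theorem 1}, the fibre computation in Section~\ref{alg_theory}, and Proposition~\ref{difffibre}. The factorization of the Jacobian into $|\delta_N(m)|$ times a function of $\gm$ alone is exactly Proposition~\ref{deltas}, and you correctly flag the explicit determinant computation (Sections~\ref{Lie}--\ref{routes}) as the hard part.

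There is, however, a genuine bookkeeping error in how you obtain $D_{\Sha}(\gm)$. The residual Jacobian of $\Sha_S$ at $(e,\gm)$ with respect to the \emph{Haar} measure $dn$ is \emph{not} $D_{\Sha}(\gm)$; the paper computes it to be
\[
|\delta(\gm)| \;=\; |\det \gm_G|^{\dim W \pm 1}\,|D_{H^E}(\gm)|,
\]
so your displayed formula for $\int_N f\,dn$ is off by the factor $|\delta_N(\gm_G)|^{-1/2}$. Correspondingly, the passage from $dn$ to $d_mn$ does more than cancel $|\delta_N(m)|$: since $d_mn=\phi(n)\,dn$ with $\phi(n(\xi,\eta))=|\delta_N(m(\eta\ups,1))|^{-1/2}$, one picks up $\phi(\Ad(m)n_S(\gm))=\phi(n_S(\gm))\,|\delta_N(m)|^{-1}$, and the surviving factor $\phi(n_S(\gm))=|\delta_N(\gm_G)|^{-1/2}$ is precisely what converts $|\delta(\gm)|$ into $D_{\Sha}(\gm)$ (via Proposition~14 of \cite{IFS}). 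Your proposed characterization of $d_mn$ --- that its pullback along each orbit map is $\tfrac{dm}{dz}$ --- only pins $d_mn$ down up to a function constant on $\Ad(M)$-orbits, i.e.\ up to a function of $\gm$, so it cannot by itself produce the specific factor $D_{\Sha}(\gm)$. Fixing this requires using the explicit density $\phi$ from Section~\ref{Ad-invt} and evaluating it at $n_S(\gm)$; once you do that, the rest of your argument goes through.
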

   
Here $D_{H^E}$ is the usual discriminant, and $D^\theta_G$ is a $\theta$-twisted discriminant. 
 
These formulas generalize formulas from \cite{GS98} and \cite{IFS} in the case of $\dim W=\dim X$, and from \cite{GSIII} in the case $\dim X=\dim W+1$.
For us, there were significant difficulties not found in the equal-size case.  Rather than realizing $H$ as a subgroup of $G$, we needed to realize a suitable {\it subgroup} of $H$ as a subgroup of $G$. Of course, the image of $\Norm$ lies in a subset of $H$ of measure $0$.  But the real difficulty was to properly compare the tangent spaces of
$M/\Delta_S \times S$ and $N$ for the Jacobian calculation.  Only with careful decomposition and eventual luck was this possible.

The theory developed in this paper will be used in an ongoing project to study intertwining operators, which are given by integrating over unipotent radicals.  Here $F$ is $p$-adic.  Briefly, the geometry of the interaction of $G$ and $H$ through the adjoint action on $M$ on $N$ encodes the Langlands functoriality between the representation theory of $G$ and $H$.  These intertwining operators are used to define Langlands-Shahidi $L$-functions, and the project thus connects the theory of $L$-functions to functoriality.  We refer the reader to the papers (\cite{S92},  \cite{S95}, \cite{GS98}, \cite{GS01}, \cite{GSIII}, \cite{Spallone},  \cite{Comp}, \cite{WWL})   of Goldberg, Shahidi, Wen-Wei Li, and the second author for details.

We emphasize that the best results thus far are only in the case of $\dim X=0$ (\cite{S92}) or $\dim W=\dim X$ (\cite{GS98}, \cite{Spallone}, \cite{Comp}, \cite{WWL}), but the results of this paper will open up many interesting cases with 
$\dim W < \dim X$.  The explicit Jacobian calculations at the end of this paper (as in \cite{IFS}) are crucial, because they allow for the application of endoscopic transfer to this project, as in \cite{WWL}.
This paper also fits into the program of Cogdell and Shahidi \cite{Kudla} of computing generalized Bessel functions.  We have decomposed a measure on the space of $\Ad(M)$-orbits of $N$, a homogeneous space, and this measure is an important instance of the generalized functionals considered in this program.

We now delineate the sections of this paper.  After the preliminaries in Section \ref{Preliminaries}, we review the Goldberg-Shahidi Norm map in Section \ref{Norm_Corr}.  Next, in Section \ref{sect-kmaxtori} we begin to build the theory of $k$-maximal tori, and obtain a density result for suitably regular elements.  In Section \ref{Section_Section} we construct our sections $n_S(\gm)$.
The map $\Sha$ is defined next in Section \ref{alg_theory} and we compute its fibre and image.  In Section \ref{Lie} we  carve up the tangent spaces for the domain and range of $\Sha$.  Thus far, everything is valid for an arbitrary field $F$.  
 
Next, we assume that $F$ is a local field.  We study the derivative of $\Sha$ in Section  \ref{local_now}, and calibrate differential forms on all the pieces.  Next in Section \ref{routes} we break up our spaces further according to root space decompositions and find invariant spaces for the differential.  Finally in Section \ref{LastSection} we consolidate our work and deduce the integration formula.
 
The authors would like to thank Sandeep Varma and Vivek Mallick for useful conversations. This paper is a part of the first author's thesis and he would like to thank his advisor Dipendra Prasad for constant encouragement. This work was initiated during the second author's pleasant visit to the Tata Institute of Fundamental Research in Mumbai, and it is a pleasure to thank the institute for its support.

\section{Preliminaries} \label{Preliminaries}
\subsection{Notation}
Throughout this paper $F$ will be a field of characteristic not two.   From Section \ref{local_now} onwards, we will assume that $F$ is a local field.
If $V$ is a vector space over $F$ with a nondegenerate bilinear form $\Phi$ we write
\begin{equation*}
\Isom(V,\Phi)=\{ g\in \GL(V)\ |\ \Phi(gv_{1},gv_{2})=\Phi(v_{1},v_{2})\ \forall v_{1},v_{2}\in V \}
\end{equation*}
for the group of linear isometries of $V$.

If $A$ is a subspace of $V$ we write $A^\circ$ for the set of vectors perpendicular to $A$.  We say that $A$ is nondegenerate if the restriction of $\Phi$ to $A$ is nondegenerate; in this case $A^\circ$ is also nondegenerate.

If $G$ is a group and $S$ is a subgroup, we write $Z_G(S)$ for the centralizer in $G$ of $S$, and $N_G(S)$ for the normalizer in $G$ of $S$.
Our varieties are usually defined over $F$.  We use normal script (i.e. ``$G$''), respectively bold script (i.e. ``$\bG$'') for the $F$-points, respectively for the $\ol F$-points of these varieties.
If $\bG$ is an algebraic group we write $\bG^\circ$ for the identity component of $\bG$ in the Zariski topology.

 \subsection{The Unipotent Radical}
Let $V$ be a finite dimensional $F$-vector space with a nondegenerate bilinear form $\Phi$.  
Assume that $\Phi$ is either symmetric or antisymmetric.  Let $G^1=\Isom(V,\Phi)$. 
 Let $W$ be an even-dimensional totally isotropic subspace of $V$, and $P$ the stabilizer of $W$ in $G^1$.  Then $P$ is a parabolic subgroup of $G^1$.

Pick a subspace $W'$ of $V$ so that $W+W'$ is direct and nondegenerate. Let $X=(W+W')^{\circ}$. 
Let $M$ be the subgroup of $G^1$ that preserves $W$, $X$ and $W'$; it is a Levi subgroup of $P$.   Let $G=\GL(W)$ and $H=\Isom(X,\Phi |_X)$.  Given $g \in G$ and $h \in H$, write $m(g,h)$ for the element in $G^1$ whose restriction to $W$ is $g$ and whose restriction to $X$ is $h$.  Then $m( \cdot ,\cdot )$ is an isomorphism from $G \times H$ to $M$.
 
Let $N$ be the unipotent radical of $P$.  An element $n\in N$ is determined by linear maps
$$\xi:X\to W, \ \xi':W'\to X, \ \eta:W'\to W$$
such that:
\begin{enumerate} 
\item If $w \in W$, then $n(w)=w$.
\item If $x\in X$, then $n(x)=x+\xi(x)$. 
\item If $w'\in W'$, then $n(w')=w'+\xi'(w')+\eta(w')$.
\end{enumerate}
Define $\xi^{*}:W'\to X$ by $$\Phi(\xi^{*}(w'),x)=\Phi(w',\xi(x)),$$ for $x\in X$ and $w'\in W'$. Similarly define $\eta^{*}:W'\to W$, the adjoint of $\eta$, by $$\Phi(\eta^{*}(w_1'),w_2')=\Phi(w_1',\eta(w_2')),$$ for $w_1',w_2'\in W'$.  Since $n$ is determined by $\xi$ and $\eta$ we write $n=n(\xi,\eta)$. 

The condition that $n(\xi,\eta) \in G^1$ is equivalent to the two conditions 
\begin{enumerate}
\item $\xi^{*}+\xi'=0$,
\item $\eta^{*}+\eta=\xi\xi'$. 
\end{enumerate}
Thus $\xi'$ is determined by $\xi$ and we see that $n(\xi,\eta) \in N$ if and only if  
\begin{align}\label{defn}
\eta+\eta^{*}+\xi\xi^{*}=0.
\end{align}
We have $n(\xi_{1},\eta_{1})n(\xi_{2},\eta_{2})=n(\xi_{1}+\xi_{2},\eta_{1}+\eta_{2}-\xi_{1}\xi_{2}^{*})$ and $n(\xi,\eta)^{-1}=n(-\xi,\eta^*)$.  
 
We have $\Ad(m(g,h))(n)=mnm^{-1}=n(g\xi h^{-1}, g\eta g^{*})$.

\section{Norm Correspondence} \label{Norm_Corr}

Write $N'$ for the matrices $n(\xi, \eta) \in N$ with $\eta$ invertible.  It is clearly an open subset of $N$. 

\begin{defn}
Suppose that $n=n(\xi,\eta) \in N'$.  Let $\Norm(n):X\to X$ be the linear transformation given by 
\begin{equation*}
\Norm(n)=1+\xi^{*}\eta^{-1}\xi.
\end{equation*}
\end{defn}

Here are some first properties of $\Norm$.

\begin {lemma}\label{basicnorm}
\begin{enumerate}
\item $\Norm(n)\in H$, and $\rank(\Norm(n)-1; X) = \dim W$. 
\item If $n \in N'$ and $m(g,h) \in M$, then $\Norm(\Ad(m(g,h))n)=\Ad(h)\Norm(n)$.
\item If $g\in G$, then the pair $(g\xi,g\eta g^{*})$ also satisfies (\ref{defn}) and 
$$\Norm (n(\xi,\eta))=\Norm(n(g\xi,g\eta g^{*})).$$ 
\end{enumerate}
\end{lemma}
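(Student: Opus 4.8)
\medskip

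The plan is to obtain all three parts by direct computation from the definitions of $\Norm$, $\xi^{*}$, $\eta^{*}$ and the relation (\ref{defn}); parts (ii) and (iii) are then formal, and the only point requiring real attention is showing that $\Norm(n)$ preserves $\Phi|_{X}$. Write $\eps=\pm1$ according as $\Phi$ is symmetric or antisymmetric, and recall that since $W$ and $W'$ are totally isotropic with $W\oplus W'$ nondegenerate, $\Phi$ induces a perfect pairing $W\times W'\to F$.

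For (i), set $\Norm(n)=1+C$ with $C=\xi^{*}\eta^{-1}\xi\in\End(X)$. One has $\Phi\bigl((1+C)x_{1},(1+C)x_{2}\bigr)=\Phi(x_{1},x_{2})$ for all $x_{1},x_{2}\in X$ (so that $1+C$ preserves $\Phi|_{X}$ and hence lies in $H$) precisely when
\[
\Phi(Cx_{1},x_{2})+\Phi(x_{1},Cx_{2})+\Phi(Cx_{1},Cx_{2})=0 .
\]
I would verify this by putting $w_{i}'=\eta^{-1}\xi x_{i}\in W'$, so that $\xi x_{i}=\eta w_{i}'$: using the defining property of $\xi^{*}$ and $\eps$-symmetry of $\Phi$, the first two terms become $\Phi(w_{1}',\eta w_{2}')$ and $\eps\,\Phi(w_{2}',\eta w_{1}')$; expanding the third as $\Phi(w_{1}',\xi\xi^{*}w_{2}')$ and substituting $\xi\xi^{*}=-(\eta+\eta^{*})$ from (\ref{defn}) turns it into $-\Phi(w_{1}',\eta w_{2}')-\Phi(w_{1}',\eta^{*}w_{2}')$; and the definition of $\eta^{*}$ together with $\eps$-symmetry identifies $\Phi(w_{1}',\eta^{*}w_{2}')$ with $\eps\,\Phi(w_{2}',\eta w_{1}')$, so the three terms cancel. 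For the rank statement, $C$ factors as $X\xrightarrow{\xi}W\xrightarrow{\eta^{-1}}W'\xrightarrow{\xi^{*}}X$, giving $\rank(C;X)\le\dim W$; since $\eta^{-1}$ is an isomorphism and, by the perfect pairing above, $\ker\xi^{*}=(\im\xi)^{\perp}\cap W'$, one obtains $\rank(C;X)=\dim W$ exactly when $\xi$ is surjective onto $W$, and this is where the precise definition of $N'$ enters.

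For (iii), the mechanism is functoriality of the adjoints: directly from the defining identities one has $(g\zeta)^{*}=\zeta^{*}g^{*}$ for a linear map $\zeta\colon X\to W$, and $(g\eta g^{*})^{*}=g\eta^{*}g^{*}$, where $g^{*}$ is the adjoint of $g\in\GL(W)$ for the pairing $W\times W'\to F$. Hence
\[
(g\xi)(g\xi)^{*}+g\eta g^{*}+(g\eta g^{*})^{*}=g\bigl(\xi\xi^{*}+\eta+\eta^{*}\bigr)g^{*}=0 ,
\]
so $(g\xi,g\eta g^{*})$ satisfies (\ref{defn}); and since $(g\eta g^{*})^{-1}=(g^{*})^{-1}\eta^{-1}g^{-1}$, the factors $g^{*}(g^{*})^{-1}$ and $g^{-1}g$ cancel, leaving
\[
\Norm\bigl(n(g\xi,g\eta g^{*})\bigr)=1+\xi^{*}g^{*}(g^{*})^{-1}\eta^{-1}g^{-1}g\xi=1+\xi^{*}\eta^{-1}\xi=\Norm(n(\xi,\eta)) .
\]
For (ii), use $\Ad(m(g,h))n=n(g\xi h^{-1},g\eta g^{*})$ from Section \ref{Preliminaries} together with $(\xi h^{-1})^{*}=h\xi^{*}$ (valid since $h$ is an isometry of $X$), which gives $(g\xi h^{-1})^{*}=h\xi^{*}g^{*}$; substituting into the formula for $\Norm$ and cancelling the $g$'s as above yields $\Norm(\Ad(m(g,h))n)=h(1+\xi^{*}\eta^{-1}\xi)h^{-1}=\Ad(h)\Norm(n)$. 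Alternatively one may deduce (ii) from (iii) applied to the pair $(\xi h^{-1},\eta)$.

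The one delicate step is the cancellation of the three terms in (i): it is easy to mishandle a factor of $\eps$ or to conflate the adjoints $\xi^{*}$ and $\eta^{*}$, so I would carry it out with the substitution $w_{i}'=\eta^{-1}\xi x_{i}$ made explicit from the outset. The remaining assertions then follow mechanically from the functoriality of $(-)^{*}$ and from (\ref{defn}).
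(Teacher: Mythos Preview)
The paper states this lemma without proof, treating it as a list of ``first properties'' of $\Norm$; your direct verification from the definitions is exactly the natural argument and is correct. Your handling of part (i) via the substitution $w_i'=\eta^{-1}\xi x_i$ and relation (\ref{defn}) is clean, and parts (ii) and (iii) follow formally from the adjoint identities $(g\zeta)^*=\zeta^*g^*$ and $(\xi h^{-1})^*=h\xi^*$ just as you say.

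One point worth making explicit: you correctly observe that $\rank(\Norm(n)-1;X)=\dim W$ holds precisely when $\xi$ is surjective, and you flag this as ``where the precise definition of $N'$ enters.'' But the paper defines $N'$ only by the condition that $\eta$ be invertible, which does \emph{not} by itself force $\xi$ to be surjective (take $\xi=0$ and $\eta$ skew with $\eta$ invertible). So as literally stated in the paper the equality in (i) should be an inequality $\rank(\Norm(n)-1;X)\le\dim W$; this is all that is used downstream, since the image of $\Norm$ is only required to land in $\bH_k=\{h:\rank(h-1)\le k\}$. Your phrasing already anticipates this, so nothing in your argument needs to change---just be aware that the strict equality is a slight overstatement in the paper rather than something you have failed to prove.
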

 
The following is Lemma 4.1 in \cite{S95}.  For completeness, we rewrite the proof in our notation.
\begin {lemma}\label{gentoclass}
Let $g\in \GL(X)$ be such that $\xi\xi^{*}=(\xi g)(\xi g)^{*}$. Then there exists $h\in H$ such that $\xi g=\xi h$.
\end{lemma}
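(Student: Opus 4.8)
The plan is to analyze the hypothesis $\xi\xi^* = (\xi g)(\xi g)^*$ by comparing the two maps $\xi$ and $\xi g$ on $X$, and then to produce the desired isometry $h$ by a Witt-type argument. First I would note that $(\xi g)(\xi g)^* = \xi g g^* \xi^*$, so the hypothesis reads $\xi\xi^* = \xi g g^* \xi^*$, an equality of maps $W' \to W$. Setting $u = g g^*$ (a self-adjoint element of $\GL(X)$ with respect to $\Phi|_X$), the condition becomes $\xi u \xi^* = \xi \xi^*$, i.e. $\xi(u - 1)\xi^* = 0$. The point is to show that $\xi$ and $\xi g$ have the ``same geometry'' on $X$ in the sense detected by $\Phi$: concretely, that the bilinear forms $x_1, x_2 \mapsto \Phi(\xi(x_1), \text{(something)})$ built from $\xi$ and from $\xi g$ agree after composing appropriately.

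The cleaner route is to consider the subspaces $\Ker \xi$ and $\Ker(\xi g) = g^{-1}(\Ker \xi)$ of $X$ and the induced nondegenerate pairing. Since $W$ is totally isotropic and $\dim W$ is even, one can fix a nondegenerate pairing between $W$ and $W'$ via $\Phi$, and transport the adjoint: $\xi^* : W' \to X$ is essentially the transpose of $\xi$ under this pairing. The equation $\xi\xi^* = \xi g g^* \xi^*$ then says precisely that the two maps $\xi^\flat, (\xi g)^\flat : X \to W^*$ (where $\flat$ denotes composition with $\Phi$) have the property that $\xi^\flat$ and $(\xi g)^\flat$ induce the \emph{same} quadratic/bilinear data on $X$. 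I would then invoke Witt's extension theorem: any linear map between subspaces of a nondegenerate space preserving the restricted form extends to a global isometry. Here, $g$ maps $X$ to $X$ and intertwines $\xi$ and $\xi g$ up to this form-preserving condition, so there is an isometry $h \in H = \Isom(X, \Phi|_X)$ with $\xi g = \xi h$ on the relevant complement of $\Ker \xi$, and one extends $h$ arbitrarily (by an isometry) across $\Ker \xi$, which does not affect $\xi h$.

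More precisely, the key step is: define a linear isomorphism from $g(\Ker\xi)^{\perp_?}$ — better, work with a complement. Let $Y \subseteq X$ be a complement to $\Ker\xi$, so $\xi|_Y$ is injective. The hypothesis $\xi g g^* \xi^* = \xi\xi^*$, together with nondegeneracy of $\Phi$ on $X$ and the fact that $\xi^*$ has image of dimension $\rank\xi = \dim W$ (since $\eta$ invertible forces $\xi$ to have full rank, cf. Lemma \ref{basicnorm}(i), $\rank(\Norm(n)-1;X) = \dim W$), pins down that $g$ carries $Y$ to another complement $Y'$ of $\Ker\xi$ isometrically onto its image \emph{relative to the pullback form} $\Phi(\xi(\cdot), \xi^{*-1}(\cdot))$. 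One checks this pullback form on $Y$ is nondegenerate, the map $Y \to Y'$, $y \mapsto g y$, preserves it, hence by Witt extension it is the restriction of a global isometry $h_0 \in H$. Then $\xi g = \xi h_0$ on $Y$; and since both sides kill $\Ker\xi$... no: $\xi g$ need not kill $\Ker\xi$. So instead I take $h$ to agree with $g$ on $Y$ (forced) and to be \emph{anything} isometric elsewhere — but we need $\xi h = \xi g$ on all of $X$, which is why one must be careful. The honest fix: replace $Y$ by $(\Ker\xi)^\circ$ when $\Ker\xi$ is nondegenerate, or argue that $\xi$ factors through $X/\Ker\xi$ and the form-preserving map on $X/\Ker\xi$ lifts to an isometry of $X$ — this lifting is the crux.

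The main obstacle I anticipate is exactly this last point: translating the ``composed'' equality $\xi g g^*\xi^* = \xi\xi^*$ into a statement about $g$ preserving an \emph{honest nondegenerate form on a subspace of $X$}, and then applying Witt's theorem to get a genuine element of $H$ (not merely of $\GL(X)$) that reproduces $\xi g$. Degeneracy of $\Phi$ restricted to $\Ker\xi$ or to $\im\xi^*$ could force a more delicate argument — one may need to first modify $g$ by an element of the unipotent radical of a parabolic stabilizing $\Ker\xi$, reducing to the case where everything is in ``good position.'' Once the form-theoretic reformulation is in hand, Witt's extension theorem finishes the proof immediately.
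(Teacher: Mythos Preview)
Your instinct to use Witt's extension theorem is exactly right, and it is what the paper does. But you are making life hard for yourself by working on the $\xi$-side with kernels and complements; this is why you keep running into the problem that $\xi g$ need not kill $\Ker\xi$, and why you end up speculating about modifying $g$ by unipotent elements or worrying about degeneracy of $\Phi$ on $\Ker\xi$. None of that is needed.

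The missing idea is to \emph{dualize}: work with $g^*$ acting on $U=\im\xi^*\subseteq X$ rather than with $g$ acting on a complement of $\Ker\xi$. Since $(\xi g)^*=g^*\xi^*$, the hypothesis $\xi\xi^*=(\xi g)(\xi g)^*$ gives, for all $w_1',w_2'\in W'$,
\[
\Phi(g^*\xi^*w_1',\,g^*\xi^*w_2')
=\Phi\bigl(w_1',(\xi g)(\xi g)^*w_2'\bigr)
=\Phi(w_1',\xi\xi^*w_2')
=\Phi(\xi^*w_1',\xi^*w_2').
\]
So $g^*|_U:U\to X$ is an isometric embedding with respect to $\Phi|_X$. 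Witt's theorem then extends $g^*|_U$ to an isometry $h^*\in H$. Because $h^*$ and $g^*$ agree on the image of $\xi^*$, we get $h^*\xi^*=g^*\xi^*$, and taking adjoints yields $\xi h=\xi g$. This is the entire proof: three lines, no complements, no degeneracy issues, no unipotent corrections. The point is that the equality $\xi h=\xi g$ is equivalent to $h^*$ and $g^*$ agreeing on $\im\xi^*$, which is exactly the kind of partial data Witt extends.
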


\begin{proof}
Let $U\subset X$ be the image of $\xi^{*}$.  For $w_{1}',w_{2}'\in W'$, one has
$$\Phi(g^{*}\xi^{*}w_{1}',g^{*}\xi^{*}w_{2}')=\Phi(w_{1}',(\xi g)(\xi g)^{*}w_{2}')=\Phi(w_{1}',\xi \xi ^{*}w_{2}')=\Phi(\xi^{*}w_{1}',\xi^{*}w_{2}').$$
Thus we see that $g^{*}|_{U}: U \to X$ is an isometry. By Witt's theorem, $g^{*}|_U$ extends to an element $h^{*}$ of $H$ such that $h^{*}: X \to X$ is an isometry. Therefore, $g^{*}\xi^{*}=h^{*}\xi^{*}$, giving the lemma.
\end{proof}

\begin{cor}\label{cogentoclass}
Suppose $\xi_{1}, \xi_2: X \to W$ are surjections such that $(\xi_{1},\eta)$ and $(\xi_2,\eta)$ both satisfy (\ref{defn}). Then there is an $h\in H$ so that $\Norm(n(\xi_{1},\eta))=h^{-1}\Norm(n(\xi_2,\eta))h$.
\end{cor}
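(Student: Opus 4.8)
The plan is to reduce the statement to Lemma~\ref{gentoclass}. The key preliminary observation is that the defining relation (\ref{defn}) pins down the operator $\xi\xi^{*}$ in terms of $\eta$ alone: since $(\xi_{1},\eta)$ and $(\xi_{2},\eta)$ each satisfy $\eta+\eta^{*}+\xi\xi^{*}=0$, subtracting gives $\xi_{1}\xi_{1}^{*}=-\eta-\eta^{*}=\xi_{2}\xi_{2}^{*}$. So the two surjections have exactly the same ``Gram operator'' on $W'$, which is precisely the hypothesis needed to invoke the earlier lemma.

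First I would produce an element $g\in\GL(X)$ with $\xi_{2}=\xi_{1}g$. This is pure linear algebra: because $\xi_{1}$ and $\xi_{2}$ are surjections $X\to W$, their kernels have the same dimension; choosing a complement $C_{i}$ to $\ker\xi_{i}$ in $X$ (so that $\xi_{i}$ restricts to an isomorphism $C_{i}\to W$) together with any linear isomorphism $\ker\xi_{2}\to\ker\xi_{1}$, one assembles such a $g$. Then $(\xi_{1}g)(\xi_{1}g)^{*}=\xi_{2}\xi_{2}^{*}=\xi_{1}\xi_{1}^{*}$, so Lemma~\ref{gentoclass}, applied with $\xi=\xi_{1}$, furnishes an $h\in H$ with $\xi_{1}g=\xi_{1}h$; that is, $\xi_{2}=\xi_{1}h$.

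It then remains only to feed this into the definition of $\Norm$. Substituting $\xi_{2}=\xi_{1}h$ gives
\[
\Norm(n(\xi_{2},\eta))=1+\xi_{2}^{*}\eta^{-1}\xi_{2}=1+h^{*}\xi_{1}^{*}\eta^{-1}\xi_{1}h,
\]
and since $h$ is an isometry of $X$ we have $h^{*}=h^{-1}$, so the right-hand side is $h^{-1}\Norm(n(\xi_{1},\eta))h$. Replacing $h$ by $h^{-1}\in H$ (which we may, since $H$ is a group) yields the asserted conjugacy $\Norm(n(\xi_{1},\eta))=h^{-1}\Norm(n(\xi_{2},\eta))h$. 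The only step that is not entirely formal is the construction of $g$, and the whole argument really rests on the opening remark that (\ref{defn}) makes $\xi_{i}\xi_{i}^{*}$ independent of $i$; once Lemma~\ref{gentoclass} is available there is no genuine obstacle, though one should be careful to track the direction of the conjugation and the fact that $h^{*}=h^{-1}$ for $h\in H$.
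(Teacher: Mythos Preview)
Your proof is correct and follows essentially the same approach as the paper's: use the shared relation (\ref{defn}) to get $\xi_1\xi_1^{*}=\xi_2\xi_2^{*}$, write one surjection as the other composed with some $g\in\GL(X)$, apply Lemma~\ref{gentoclass} to replace $g$ by an isometry $h$, and then unwind the definition of $\Norm$. The only cosmetic difference is that the paper writes $\xi_1=\xi_2 g$ (rather than $\xi_2=\xi_1 g$), which lands directly on the stated direction of conjugation without your final swap $h\mapsto h^{-1}$.
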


\begin{proof}
Since both $\xi_1$ and $\xi_2$ are of full rank, there exists $g\in \GL(X)$ such that $\xi_{1}=\xi_2 g$. Also since both of them satisfy equation (\ref{defn}) we have
$$\xi_{2}\xi_{2}'=\xi_{1}\xi_{1}'=(\xi_2g)(\xi_2 g)'.$$Now by Lemma \ref{gentoclass}, there exists $h\in H$ such that $\xi_2 g=\xi_2 h$. A simple calculation now shows that $\Norm(n(\xi_{1},\eta))=h^{-1}\Norm(n(\xi_2,\eta))h$.
\end{proof}

\section{Tori in $\bH_k$}\label{sect-kmaxtori}
\subsection{The $k$-regular Set}
The goal of this section is to establish a subset $\bH_{k,r}$ of $\bH$ which is open in the image of $\Norm$. 
We will use boldface notation to denote the $\ol F$-points of the various varieties, so in particular the tori are all split.  Of course, we are considering the Zariski topology.

Throughout, we will fix an even integer $k \leq \dim X$.

\begin{defn}  Put
\begin{equation*}
\bH_k=\{ h\in \bH \mid \rank(h-1; X) \leq k \}.
\end{equation*}
  We are interested in the tori $\bS \subseteq \bH_k$ of the greatest possible dimension, which is $\frac{k}{2}$.  We call such an $\bS$ (defined over $F$) a $k$-maximal torus.
\end{defn}
  
Fix a maximal torus $\bT$ in $\bH$.    Put $\bT_k=\bT \cap \bH_k$.
 
\begin{lemma} \label{bounce}
\begin{enumerate}
\item The irreducible components of $\bT_k$ are the $k$-maximal tori of $\bH$ contained in $\bT$.  In particular, there are finitely many.
\item Any two $k$-maximal tori in $\bH$ are conjugate.
\item The dimension of the fixed point space ${\bf X}^{\bS}$ is $\dim {\bf X}-k$.
\end{enumerate}
\end{lemma}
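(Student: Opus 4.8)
The plan is to reduce all three parts to an explicit weight–space computation for the split maximal torus $\bT$, which is legitimate here since we work over $\ol F$.

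I would first fix the standard identification $X^{*}(\bT)\cong\Z^{n}$, with $n=\lfloor\tfrac12\dim\bX\rfloor$, under which the weights of $\bT$ on $\bX$ are $\pm e_{1},\dots,\pm e_{n}$ --- together with the weight $0$, occurring with multiplicity one, exactly when $\bH$ is odd orthogonal --- and each nonzero weight space $\bX_{\pm e_{i}}$ is one--dimensional. For $t\in\bT$ the operator $t-1$ respects this decomposition and kills precisely the spaces $\bX_{\pm e_{i}}$ with $e_{i}(t)=1$; since $e_{i}(t)=1\iff(-e_{i})(t)=1$, this yields the key identity
\[
\rank(t-1;\bX)=2\cdot\#\{\,i:e_{i}(t)\neq1\,\}.
\]
Hence $t\in\bT_{k}$ iff at most $k/2$ of the $e_{i}(t)$ differ from $1$, so $\bT_{k}=\bigcup_{|I|=k/2}\bT_{I}$, where $\bT_{I}=\{t\in\bT:e_{i}(t)=1\text{ for }i\notin I\}$ is a subtorus of dimension $k/2$. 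The $\bT_{I}$ are irreducible, closed, pairwise distinct and of equal dimension, so none lies in the union of the others; they are therefore exactly the irreducible components of $\bT_{k}$, and there are $\binom{n}{k/2}$ of them. Conversely any $k$-maximal $\bS\subseteq\bT$ is irreducible and contained in $\bT_{k}$, hence in some $\bT_{I}$, and equality of dimensions forces $\bS=\bT_{I}$; each $\bT_{I}$ is visibly $k$-maximal. This gives (i).

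For (ii): every torus of $\bH$ lies in a maximal torus and all maximal tori are conjugate, so after conjugating we may assume the two given $k$-maximal tori are $\bT_{I_{1}}$ and $\bT_{I_{2}}$ as in (i). Since $|I_{1}|=|I_{2}|$, it suffices to produce a Weyl group element carrying $I_{1}$ to $I_{2}$: the permutation matrices realizing the transpositions $(i\,j)$ of coordinates lie in $\bH$ for each classical family, and these generate a copy of $S_{n}$ acting transitively on $k/2$-subsets, so such an element exists and any lift to $N_{\bH}(\bT)$ conjugates $\bT_{I_{1}}$ onto $\bT_{I_{2}}$. For (iii): $\dim\bX^{\bS}$ is a conjugacy invariant, so by (i)--(ii) we may take $\bS=\bT_{I}$; a generic $t\in\bT_{I}$ has $e_{i}(t)\neq1$ for all $i\in I$, whence $\rank(t-1;\bX)=2|I|=k$ by the identity above, and $\bX^{\bT_{I}}=\ker(t-1;\bX)$ for such $t$, giving $\dim\bX^{\bT_{I}}=\dim\bX-k$. (One can also argue without coordinates: $\bX^{\bS}$ and $(\bX^{\bS})^{\circ}$ are $\bS$-stable and nondegenerate, $\bS$ acts faithfully on $(\bX^{\bS})^{\circ}$ with no nonzero fixed vector, forcing $\tfrac12\dim(\bX^{\bS})^{\circ}\ge\dim\bS=k/2$, while $k$-maximality applied to a generic element of $\bS$ gives $\dim(\bX^{\bS})^{\circ}\le k$.)

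The argument is elementary and I anticipate no serious obstacle; the only points demanding care are the uniform bookkeeping across the symplectic, even-orthogonal and odd-orthogonal cases --- in particular that the nonzero weight spaces are genuinely one-dimensional, so an ``active'' coordinate contributes rank exactly $2$ --- and, for (ii), that the relevant permutation matrices really do lie in $\bH$ (including when $\bH$ is the full, disconnected orthogonal group).
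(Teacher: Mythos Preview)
Your proof is correct and follows essentially the same approach as the paper: both work with a split maximal torus, identify $\bT_k$ explicitly via the coordinate description, pin down the $k$-maximal subtori as those corresponding to $k/2$-element subsets, and use the symmetric-group part of the Weyl group for conjugacy. Your write-up is simply more detailed than the paper's sketch. One caution: your parenthetical coordinate-free argument for (iii) invokes the nondegeneracy of $\bX^{\bS}$, which in the paper is established \emph{after} this lemma (Proposition~\ref{Enondeg}), so in the paper's logical order that aside would be circular; your primary argument via a generic element is fine and self-contained.
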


\begin{proof}  We may assume $\bT$ to be diagonal, with elements of the form $$\gm=\diag(t_1, t_2, \ldots, t_2^{-1}, t_1^{-1}).$$ In this way we identify $\bT$ with $\Gm^r$, where $r$ is the rank of $\bH$.  We then have
\begin{equation*}
\bT_k=\left\{ (a_1, \ldots, a_r) \mid \text{ at least $\frac{k}{2}$ of $a_1, \ldots, a_r$ are equal to $1$} \right\}.
\end{equation*}
A $k$-maximal subtorus $\bS$ of $\bT$ corresponds to a subset $\sigma$ of $\{1, \ldots, r \}$ with $\frac{k}{2}$ elements, via
\begin{equation*}
\bS=\bS_{\sigma}=\{ (a_i) \in \bT \mid a_i=1 \text{ unless } i \in \sigma \}.
\end{equation*}

It is easy to see that the $\bS_{\sigma}$ are conjugate via the Weyl group of $\bT$ in $\bH$, and the last statement is clear.
\end{proof}

Now let $\bS$ be a $k$-maximal torus in $\bH$, and write $S$ for the $F$-points of $\bS$.  Let $\bE=\bX^{\bS}=\{ x \in \bX \mid sx=x \text{ for all } s \in \bS \}$.  We will need the fact that $\bE$ is nondegenerate.
 
\begin{lemma} \label{iamalemma}Let $(X,\Phi)$ be a nondegenerate inner product space, and $\gm$ a semisimple isometry.  Then the fixed point space $X^\gm$ is nondegenerate.
\end{lemma}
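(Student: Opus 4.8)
The plan is to show that the restriction of $\Phi$ to $X^\gm$ has trivial radical. Since $\gm$ is semisimple, the ambient space decomposes as $X = X^\gm \oplus X'$, where $X' = \im(\gm - 1)$ is the sum of the nontrivial eigenspaces (working over $\ol F$ first and then descending, or working with the minimal polynomial directly). The key point is that this decomposition is $\Phi$-orthogonal: if $x \in X^\gm$ and $x' \in (\gm-1)(y)$ for some $y$, then $\Phi(x, x') = \Phi(x, \gm y) - \Phi(x, y) = \Phi(\gm^{-1} x, y) - \Phi(x, y) = \Phi(x,y) - \Phi(x,y) = 0$, using that $\gm$ is an isometry and $\gm^{-1} x = x$.

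Given this orthogonal decomposition $X = X^\gm \perp X'$ with $X$ nondegenerate, a standard linear-algebra fact gives that each summand is nondegenerate: if $x \in X^\gm$ were in the radical of $\Phi|_{X^\gm}$, then since $x$ is also orthogonal to $X'$, it would lie in the radical of $\Phi$ on all of $X$, forcing $x = 0$. So $X^\gm$ is nondegenerate, as claimed.

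There is essentially no obstacle here; the only point requiring a little care is justifying the direct sum decomposition $X = X^\gm \oplus \im(\gm-1)$ when $\gm$ is semisimple but $F$ is not algebraically closed. One clean way is to note that semisimplicity of $\gm$ means its minimal polynomial $m(t)$ is separable, hence coprime to $t - 1$ after removing that factor: write $m(t) = (t-1)^e q(t)$ with $q(1) \neq 0$, but in fact $e \le 1$ by semisimplicity, so $m(t) = (t-1) q(t)$ or $m(t) = q(t)$ with $(t-1)$ and $q(t)$ coprime, and then $X = \Ker(\gm - 1) \oplus \Ker q(\gm)$ by the Chinese Remainder Theorem for $F[t]$-modules, with $\Ker q(\gm) = \im(\gm - 1)$. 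Alternatively one can simply pass to $\ol F$, where $\gm$ is diagonalizable, carry out the eigenspace argument there, and observe that nondegeneracy of a form on an $F$-subspace is unaffected by base change to $\ol F$. Either route makes the proof a few lines.
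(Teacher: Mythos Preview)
Your proof is correct and is essentially the same argument as the paper's, just packaged more structurally: the paper also uses the coprime factorization $m(t)=(t-1)f(t)$ and B\'ezout to split $X$ as $X^\gm \oplus \im(\gm-1)$ (via the projectors $P=\alpha(\gm)(\gm-1)$ and $Q=\beta(\gm)f(\gm)$), and the step ``$\Phi(x,Py)=0$'' is exactly your orthogonality computation $X^\gm \perp \im(\gm-1)$. The only difference is that the paper works element-by-element (given nonzero $x\in X^\gm$, project a witness $y$ to $Qy\in X^\gm$) whereas you state the orthogonal direct-sum decomposition once and for all; both routes are equally short.
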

\begin{proof}
We may assume $1$ is an eigenvalue of $\gm$.
Let $x \in X^\gm$ be nonzero. There exists $y \in X$ such that $\Phi(x,y)\neq 0$.  Let $m(t)$ be the minimal polynomial of $\gm$. So, $m(t)=(t-1)f(t)$ with $\gcd \big(t-1,f(t)\big)=1$.  Pick $\alpha,\beta \in F[t]$ with
$$\alpha(t)(t -1)+\beta(t)f(t)=1.$$
Define maps $P,Q:X\to X$ by setting $P=\alpha(\gamma)(\gamma -1)$ and $Q=\beta(\gamma)f(\gamma)$.

One checks that $Q(y) \in X^\gm$ and $\Phi(x,Py)=0$.  Thus $\Phi(x,Qy)=\Phi(x,y)\neq 0$ and therefore $X^\gamma$ is nondegenerate.
\end{proof}

\begin{prop}\label{Enondeg}
$\bE$ is a nondegenerate subspace of $\bX$ of codimension $k$.
\end{prop}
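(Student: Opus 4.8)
The plan is to reduce the statement to Lemma \ref{iamalemma} together with Lemma \ref{bounce}. Recall that $\bE = \bX^{\bS}$ is the fixed point space of the $k$-maximal torus $\bS$. Since $\bS$ is a torus acting on the finite-dimensional space $\bX$, it is a commuting family of semisimple operators, and the fixed point space of the whole torus is the intersection of the fixed point spaces of its elements. The first step is therefore to produce a single semisimple isometry $\gm \in \bS$ with $\bX^{\gm} = \bX^{\bS}$: indeed, on the weight-space decomposition of $\bX$ under $\bS$, a generic (i.e. suitably regular) element $\gm \in \bS$ has eigenvalue $1$ exactly on the zero weight space $\bE$ and nontrivial eigenvalues on every other weight space, so $\bX^{\gm} = \bE$. (In the diagonal coordinates of Lemma \ref{bounce}, where $\bS = \bS_\sigma$, one just picks $t_i$ generic for $i \in \sigma$ and $t_i = 1$ for $i \notin \sigma$.)

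Having fixed such a $\gm$, apply Lemma \ref{iamalemma} directly: $\bX$ is nondegenerate, $\gm$ is a semisimple isometry, hence $\bE = \bX^{\gm}$ is nondegenerate. That handles nondegeneracy.

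For the codimension count, invoke Lemma \ref{bounce}(iii), which already states that $\dim \bX^{\bS} = \dim \bX - k$; equivalently one observes directly from the coordinate description that the zero weight space of $\bS_\sigma$ has dimension $\dim \bX - k$, since each of the $\frac{k}{2}$ coordinates $i \in \sigma$ contributes a pair of nontrivial weight lines. Combining the two parts gives the proposition.

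The only point requiring a little care — and the main (mild) obstacle — is justifying the existence of $\gm \in \bS$ with $\bX^{\gm} = \bE$, i.e.\ that a suitably regular element of the split torus $\bS$ detects precisely the zero weight space. This is where one genuinely uses that $\bS$ is a torus over the algebraically closed field $\ol F$ (so that all weights are defined and the regular locus is nonempty, the field being infinite); alternatively one could bypass the single-element reduction and argue nondegeneracy weight-space by weight-space, pairing the weight-$\chi$ space with the weight-$\chi^{-1}$ space and the zero weight space with itself, but funnelling everything through Lemma \ref{iamalemma} is cleaner.
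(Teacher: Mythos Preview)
Your proof is correct and follows essentially the same approach as the paper: reduce nondegeneracy to Lemma~\ref{iamalemma} and invoke Lemma~\ref{bounce} for the codimension. The only cosmetic difference is that the paper chooses a finite subset $\Gamma \subset \bS$ with $\bX^{\bS}=\bX^{\Gamma}$ and applies Lemma~\ref{iamalemma} iteratively (each $\gamma_i$ acting as a semisimple isometry on the nondegenerate space $\bX^{\gamma_1}\cap\cdots\cap\bX^{\gamma_{i-1}}$), whereas you pass directly to a single regular element $\gm$ with $\bX^{\gm}=\bE$; both reductions are valid.
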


\begin{proof}
There is a finite subset $\Gamma$ of $\bS$ so that $\bX^{\bS}=\bX^{\Gamma}$.  Repeated application of Lemma \ref{iamalemma} to the elements of $\Gamma$ shows that $\bX^{\bS}$ is nondegenerate.
The dimension statement was given in Lemma \ref{bounce}.
\end{proof}
  
Write $\bE^{\circ}$ for the subspace of vectors in $\bX$ perpendicular to $\bE$.  Write $P_{\bE}$ and $P_{\bE^\circ}$ for the orthogonal projections from $\bX$ to $\bE$ and $\bE^{\circ}$, respectively, viewed as endomorphisms of $\bX$.
Of course, $P_{\bE}+P_{\bE^{\circ}}$ is the identity on $\bX$.

\begin{defn} Let $\bH^{\bE}$ (resp., $\bH^{\bE^\circ}$) be the set of elements in $\bH$ which fix $\bE$ (resp., $\bE^\circ$) pointwise.
  Write $\bM_E$ for the (direct) product $\bH^{\bE^\circ} \bH^E$ in $\bH$.
\end{defn}

Note that $\bS$ is a maximal torus in $\bH^{\bE}$.

\begin{lemma} \label{basic xi prop} 
\begin{enumerate}
\item  $\bM_{\bE}$ is the stabilizer of $\bE$ in $\bH$. 
 \item $N_{\bH}(\bS)=\bH^{\bE^\circ} N_{\bH^{\bE}}(\bS)$ and  $Z_{\bH}(\bS)=\bH^{\bE^\circ} Z_{\bH^{\bE}}(\bS)$.
\end{enumerate}
\end{lemma}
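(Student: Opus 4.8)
The plan is to reduce both statements to the orthogonal direct sum $\bX = \bE \oplus \bE^\circ$, which is legitimate because $\bE$ is nondegenerate by Proposition~\ref{Enondeg}. First I would record two elementary facts. An isometry of $\bX$ that preserves $\bE$ also preserves $\bE^\circ$ (orthogonality is preserved), so any $h \in \bH$ with $h(\bE) = \bE$ is block diagonal, $h = \diag(a,b)$ with $a \in \Isom(\bE)$ and $b \in \Isom(\bE^\circ)$. Secondly, $\bH^{\bE}$ is supported on $\bE^\circ$ while $\bH^{\bE^\circ}$ is supported on $\bE$; hence these two subgroups commute elementwise and meet only in the identity, so $\bM_{\bE} = \bH^{\bE^\circ}\bH^{\bE} = \bH^{\bE}\bH^{\bE^\circ}$ is a genuine direct product.

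For part (i), the inclusion $\bM_{\bE} \subseteq \Stab_{\bH}(\bE)$ is immediate: $\bH^{\bE}$ fixes $\bE$ pointwise, and an element of $\bH^{\bE^\circ}$, being an isometry fixing $\bE^\circ$ pointwise, stabilizes $(\bE^\circ)^\circ = \bE$. For the reverse inclusion, given $h$ with $h(\bE) = \bE$ I write $h = \diag(a,b)$ as above, observe that $\diag(a,1) \in \bH^{\bE^\circ}$, and note that $\diag(a,1)^{-1} h = \diag(1,b) \in \bH^{\bE}$; this exhibits $h \in \bM_{\bE}$.

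For part (ii), the key identity is $\bX^{\, h\bS h^{-1}} = h(\bX^{\bS})$, valid for any $h \in \bH$. Since $\bE = \bX^{\bS}$, any $h \in N_{\bH}(\bS)$ satisfies $h(\bE) = \bE$, so by (i) it factors as $h = h_1 h_2$ with $h_1 \in \bH^{\bE^\circ}$ and $h_2 \in \bH^{\bE}$. Now $\bS \subseteq \bH^{\bE}$ because $\bS$ fixes $\bX^{\bS} = \bE$ pointwise, and $\bH^{\bE^\circ}$ commutes elementwise with $\bH^{\bE}$, hence with $\bS$; therefore $h_1 \in Z_{\bH}(\bS)$ and consequently $h_2 = h_1^{-1} h \in N_{\bH^{\bE}}(\bS)$. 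This yields $N_{\bH}(\bS) \subseteq \bH^{\bE^\circ} N_{\bH^{\bE}}(\bS)$, while the reverse inclusion follows at once from the same commutation. Running the identical argument with ``centralizes'' in place of ``normalizes'' gives $Z_{\bH}(\bS) = \bH^{\bE^\circ} Z_{\bH^{\bE}}(\bS)$.

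There is no substantial obstacle here; the one point demanding care is the constant reliance on nondegeneracy of $\bE$ (Proposition~\ref{Enondeg}) to obtain the splitting $\bX = \bE \oplus \bE^\circ$, the block form of isometries stabilizing $\bE$, and hence the direct-product description of $\bM_{\bE}$ — without it the reduction of $N_{\bH}(\bS)$ to its two factors would collapse.
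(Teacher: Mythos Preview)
Your proof is correct. The paper actually states this lemma without proof, treating it as a routine consequence of the nondegeneracy of $\bE$ established in Proposition~\ref{Enondeg}; your argument supplies exactly the standard details one would expect (block-diagonal reduction via the orthogonal splitting $\bX = \bE \oplus \bE^\circ$, and the fixed-point identity $\bX^{h\bS h^{-1}} = h(\bX^{\bS})$ to pull the normalizer into the stabilizer of $\bE$).
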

   
Of course, the connected component of $Z_{\bH^{\bE}}(\bS)$ is $\bS$.

\begin{defn} For a maximal $k$-torus $\bS$, let 
\begin{equation*}
\bS_{\reg}= \{ \gm \in \bS \mid \alpha(\gm) \neq 1 \text{ for all } \alpha \in R(\bH^{\bE},\bS) \}.
\end{equation*}
  Put 
\begin{equation*}
\bS_{\pm 1}=\{ \gm \in \bS \mid \text{the restriction }\gm \pm 1|_{\bE^\circ} \text{ is invertible} \},
\end{equation*}
and finally $\bS_r=\bS_{\reg} \cap \bS_{-1}$.
\end{defn}

Then $\bS_r$ is a nonempty open subset of $\bS$.  Fix a Borel subgroup $\bB=\bT \bU$ of $\bH$, with $\bT$ a maximal torus in $\bH$, and $\bU$ the unipotent radical of $\bB$. 
 Put $\bB_k=\bB \cap \bH_k$ and $\bT_k=\bT \cap \bH_k$.

\begin{defn}
Write $\bH^{\bS}$ for the union of the $\bH$-conjugates of $\bS_r$ in $\bH$, and put $\bT^{\bS}=\bH^{\bS} \cap \bT$ and $\bB^{\bS}=\bH^{\bS} \cap \bB$.
\end{defn}

To prove that $\bH^{\bS}$ is open in $\bH_k$, we follow the classic proof of the density of regular semisimple elements, due to Steinberg.  

\begin{lemma}
\begin{enumerate}
\item $\bT^{\bS}$ is open in $\bT_k$.
\item $\bB^{\bS}$ is open in $\bB_k$.
\end{enumerate}
\end{lemma}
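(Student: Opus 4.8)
The plan is to follow Steinberg's density argument, exploiting the explicit coordinates on $\bT_k$ from the proof of Lemma~\ref{bounce}. One remark underlies both parts: every element of $\bH^{\bS}$ has $\rank(h-1;\bX)=k$ exactly, since a conjugate of $\gm\in\bS_r$ fixes $\bE$ pointwise and, by the condition defining $\bS_{-1}$, has no nonzero fixed vector in $\bE^\circ$, so its fixed space is exactly $\bE$, of codimension $k$ by Proposition~\ref{Enondeg}. Thus $\bH^{\bS}$ lies in the open subset $\bH_k^\circ=\{h\in\bH_k\mid\rank(h-1;\bX)=k\}$ of $\bH_k$ (its complement $\{\rank(h-1;\bX)\le k-1\}$ being closed), and in particular $\bT^{\bS}\subseteq\bT_k^\circ$ and $\bB^{\bS}\subseteq\bB_k^\circ$; it suffices to prove openness inside these loci.

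For (i): write the components of $\bT_k$ as $\bS_\sigma$, $\sigma\subseteq\{1,\dots,r\}$ with $|\sigma|=\frac{k}{2}$, as in Lemma~\ref{bounce}, and let $\bE_\sigma=\bX^{\bS_\sigma}$. I would show $\bT^{\bS}=\bigsqcup_\sigma(\bS_\sigma)_r$, where $(\bS_\sigma)_r$ is defined from $\bS_\sigma$ by the conditions of Section~\ref{sect-kmaxtori} (which are intrinsic to the pair $(\bS_\sigma,\bX)$, hence agree with the image of $\bS_r$ under any conjugation carrying $\bS$ to $\bS_\sigma$, such existing by Lemma~\ref{bounce}(ii)). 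The inclusion $\supseteq$ is clear. For $\subseteq$: if $t\in\bT^{\bS}$ then $t\in\bS'_r$ for some $k$-maximal torus $\bS'$; since $t\in\bS'_{\reg}$ and, by $\bS'_{-1}$, has fixed space exactly $\bX^{\bS'}$, it is a regular element of $\bH^{\bX^t}$, hence lies in a unique maximal torus of $\bH^{\bX^t}$, namely $\bS'$. But $t\in\bT_k^\circ$ lies in a unique component $\bS_\sigma$ of $\bT_k$ (with $\sigma=\{i\mid a_i\ne 1\}$ in the coordinates of Lemma~\ref{bounce}: $\rank(t-1;\bX)=k$ forces $|\sigma|=\frac{k}{2}$, which pins the component down); one checks $\bE_\sigma=\bX^t$ and that $\bS_\sigma$ is a maximal torus of $\bH^{\bX^t}$ through $t$, so $\bS'=\bS_\sigma\subseteq\bT$ and $t\in(\bS_\sigma)_r$. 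The union is disjoint since distinct $(\bS_\sigma)_r$ sit over distinct fixed spaces. Finally $(\bS_\sigma)_r$ is open in $\bS_\sigma$ and contained in the open subset $\bT_k^\circ\cap\bS_\sigma=\bS_\sigma\setminus\bigcup_{\sigma'\ne\sigma}\bS_{\sigma'}$ of $\bT_k$, hence open in $\bT_k$; so $\bT^{\bS}$ is open in $\bT_k$.

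For (ii): I would deduce it from (i) using the projection $\pi\colon\bB\to\bB/\bU\cong\bT$, $tu\mapsto t$. For a Jordan decomposition $b=b_sb_u$ in $\bB$ with $b_u$ unipotent one has $\rank(b-1;\bX)=\rank(b_s-1;\bX)+\rank(b_u-1;\bX^{b_s})$, and $b_s$ is $\bU$-conjugate to $\pi(b_s)=\pi(b)$ (a semisimple element of the connected solvable group $\bB$ lies in a maximal torus, and all maximal tori of $\bB$ are $\bU$-conjugate to $\bT$); hence $\rank(\pi(b)-1;\bX)\le\rank(b-1;\bX)$, so $\pi$ restricts to a morphism $\bB_k\to\bT_k$. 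I then claim $\bB^{\bS}=(\pi|_{\bB_k})^{-1}(\bT^{\bS})$, which by (i) finishes the proof. One inclusion: $b\in\bB^{\bS}$ is semisimple and $\bU$-conjugate to $\pi(b)$, which is thus $\bH$-conjugate into $\bS_r$, so $\pi(b)\in\bT^{\bS}$. Conversely, take $b\in\bB_k$ with $t:=\pi(b)\in\bT^{\bS}$; after a $\bU$-conjugation we may assume $b_s=t$, so $b=tu$ with $u\in Z_{\bU}(t)$. The key point is that $Z_{\bU}(t)=\prod_{\alpha\in R^+,\ \alpha(t)=1}\bU_\alpha$ fixes $(\bX^t)^\circ$ pointwise: writing $t\in(\bS_\sigma)_r$, every positive root of $\bH$ meeting a coordinate of $\sigma$ has $\alpha(t)\ne 1$, so only root groups contained in $\bH^{(\bX^t)^\circ}$ survive. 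Hence $\bX^b\subseteq\bX^t$ and $\rank(b-1;\bX)=k+\rank(u-1;\bX^t)$; since $b\in\bB_k$ this gives $u=1$ on $\bX^t$, so $u=1$ and $b=t$, and undoing the conjugation shows the original $b$ is a $\bU$-conjugate of $t\in\bS_r$, hence in $\bB^{\bS}$.

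The step I expect to be delicate is the root-theoretic claim in (ii) that $Z_{\bU}(t)$ fixes $(\bX^t)^\circ$ pointwise for $t\in\bT^{\bS}$: this means going through the root systems of the orthogonal and symplectic groups to confirm that the regularity built into $\bS_r$ kills exactly the root groups that would move $(\bX^t)^\circ$; here the $\bS_{-1}$-condition is essential, since it is what excludes the degenerate cases (such as a rank-one subtorus in the even orthogonal case, or eigenvalue $-1$ on $\bE^\circ$ in the symplectic case). A more routine but necessary chore is to formulate the rank identity for Jordan decompositions and the component description of $\bT_k$ uniformly across the classical types, including the disconnectedness of $\bH$ in the orthogonal case.
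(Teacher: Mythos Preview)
Your argument is correct and follows the same overall strategy as the paper: for (i) you use the decomposition of $\bT_k$ into the components $\bS_\sigma$ from Lemma~\ref{bounce}, and for (ii) you reduce to showing $\bB^{\bS}=(\pi|_{\bB_k})^{-1}(\bT^{\bS})$ via the projection $\pi\colon\bB\to\bT$. The paper does exactly this, though it states part (i) far more tersely (one sentence).

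The real difference is in how you establish the harder inclusion $\pi^{-1}(\bT^{\bS})\cap\bB_k\subseteq\bB^{\bS}$. The paper writes $b=tu$ with $t=\pi(b)$ and argues via eigenvalue multiplicities: since $b$ and $t$ have the same characteristic polynomial and (it asserts) every eigenvalue of $t$ other than $1$ is simple, $b$ must be semisimple, hence $\bB$-conjugate to $t$. You instead pass to the Jordan decomposition, conjugate so that $b_s=t$, and use the root-space description of $Z_{\bU}(t)$ to see that the unipotent part $u$ fixes $(\bX^t)^\circ$ pointwise; combined with the rank bound forcing $u|_{\bX^t}=1$, this gives $u=1$ directly. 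Your route is in fact more robust: the paper's assertion that every eigenvalue $\ne 1$ of $t$ is simple can fail in the orthogonal case when some $t_i=-1$, since then $t_i=t_i^{-1}$ gives a double eigenvalue on $\bE^\circ$, and the $D$-type root system of $\bH^{\bE}$ contains no root $2e_i$ that would exclude this from $\bS_r$. Your case-check across the classical root systems---which you rightly flag as the delicate step---is exactly what fills this gap: the $\bS_{-1}$-condition handles all the ``mixed'' roots $e_i\pm e_j$ with $i\in\sigma$, $j\notin\sigma$, and regularity in $\bH^{\bE}$ handles the rest (including, in the symplectic case, the long roots $2e_i$ that rule out $t_i=-1$).
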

\begin{proof}
The first part follows from Lemma \ref{bounce}, since $\bS_{r}$ is open in $\bS$. 
For the second part, consider the projection map $p_{\bT}: \bB_k \to \bT$.  We claim that $\bB^{\bS}$ is the inverse image of $\bT^{\bS}$ under $p_{\bT}$.  Let $b=tu \in \bB^S$.  Since $b$ is semisimple, it is conjugate (in $\bB$) to $t$, and thus $t \in \bT^{\bS}$.  On the other hand, suppose $b=tu \in \bB_k$ with $t \in \bT^{\bS}$.  Recall that a linear transformation is semisimple if and only if the geometric multiplicity of every eigenvalue is equal to its algebraic multiplicity.
Then 
\begin{equation*}
\rank(b-1)=\rank(t-1)=k
\end{equation*}
and $\det(b-1)=\det(t-1)$.  Moreover since $t$ is conjugate to an element of $\bS_r$, every other eigenvalue of $t$ and $b$ has multiplicity one.   We conclude that $b$ is semisimple.  Thus $b$ is conjugate to $t$ and therefore $b \in \bB^{\bS}$.  From the claim and the first part it follows that the subset $\bB^{\bS}$ is open in $\bB_k$.
 \end{proof}

 \begin{prop} \label{Zariski} $\bH^{\bS}$ is nonempty and open in $\bH_k$.
 \end{prop}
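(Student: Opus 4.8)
The plan is to follow Steinberg's argument for openness of a regular locus: exhibit $\bH_k \setminus \bH^{\bS}$ as the image of a closed set under a proper morphism. Nonemptiness is immediate, since $\bS_r$ is nonempty, $\bS_r \subseteq \bS \subseteq \bH_k$ (as $\bS$ is $k$-maximal), and $\bS_r \subseteq \bH^{\bS}$ by definition; because $\bH_k$ is stable under $\bH$-conjugation, all conjugates of $\bS_r$ lie in $\bH_k$ as well, so $\emptyset \neq \bH^{\bS} \subseteq \bH_k$. For openness one should resist pushing the open set $\bH \times \bB^{\bS}$ forward along the conjugation map $\bH \times \bB_k \to \bH_k$, which need not be open, and instead replace $\bH$ by the complete flag variety.

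Assume first that $\bH$ is connected. Let $Z = \{ (h, B') \in \bH_k \times (\bH/\bB) \mid h \in B' \}$, with first projection $\pi \colon Z \to \bH_k$. Because $\bH/\bB$ is complete, $Z$ is closed in $\bH_k \times (\bH/\bB)$ and $\pi$ is proper, hence a closed map. On the other hand, since $\bB$ is stable under conjugation by $\bB$, the rule $(h,g) \mapsto (g^{-1}hg,\, g\bB)$ identifies $Z$ with the associated bundle $\bH \times^{\bB} \bB_k$ (with $\bB$ acting on $\bB_k$ by conjugation), compatibly with $\pi$ and with the map $[g,b] \mapsto gbg^{-1}$.

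Next, $\bB^{\bS} = \bH^{\bS} \cap \bB$ is stable under conjugation by $\bB$, being the intersection of the subgroup $\bB$ with the conjugation-stable set $\bH^{\bS}$, and it is open in $\bB_k$ by the preceding lemma; hence $Z^{\bS} := \bH \times^{\bB} \bB^{\bS}$ is open in $Z$ and $Z \setminus Z^{\bS}$ is closed. Since $\bB^{\bS} = \bB \cap \bH^{\bS}$ with $\bH^{\bS}$ conjugation-stable, for every $(h, B') \in Z$ one has $(h,B') \in Z^{\bS}$ if and only if $h \in \bH^{\bS}$; and every element of a connected group lies in some Borel subgroup, so $\pi$ is surjective. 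These two facts give $\pi(Z^{\bS}) = \bH^{\bS}$ and $\pi(Z \setminus Z^{\bS}) = \bH_k \setminus \bH^{\bS}$, and as $\pi$ is closed it follows that $\bH^{\bS}$ is open in $\bH_k$. For disconnected $\bH$ (the orthogonal case), note that $\bH^{\bS} \subseteq \bH^\circ$ and that $\bH_k \cap (\bH \setminus \bH^\circ)$ is clopen in $\bH_k$; running the argument with $\bH^\circ$ and one of its Borels shows $(\bH_k \cap \bH^\circ) \setminus \bH^{\bS}$ is closed, and adjoining the clopen complement completes the proof.

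The main obstacle, once the preliminary density statements are in hand (in particular openness of $\bB^{\bS}$ in $\bB_k$, which is the input from the preceding lemma), is the geometric set-up: recognizing that the conjugation map is the wrong one to push forward with, replacing it by the proper projection from the flag-variety incidence model, and then pinning down the two images $\pi(Z^{\bS})$ and $\pi(Z \setminus Z^{\bS})$ precisely. The disconnectedness of $\bH$ in the orthogonal case requires the small separate observation above, since then not every element of $\bH_k$ lies in a Borel subgroup.
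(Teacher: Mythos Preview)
Your proof is correct and follows essentially the same Steinberg-style argument as the paper: realize $\bH_k \setminus \bH^{\bS}$ as the image, under the proper projection from the flag variety $\bH/\bB$, of a closed subset of the incidence variety (the paper writes down this closed set directly as $\mathcal X = \{(h\bB,x) \mid h^{-1}xh \in \bB_k \setminus \bB^{\bS}\}$, which is your $Z \setminus Z^{\bS}$). Your explicit handling of the disconnected orthogonal case---noting that $\bH^{\bS} \subseteq \bH^\circ$ and that the other component of $\bH_k$ is clopen---is a useful clarification the paper leaves implicit.
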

 
\begin{proof}
$\bH^{\bS}$ is nonempty because it contains $\bS_r$.

To prove it is open, let $\bA=\bB_k - \bB^{\bS}$.  Then $\bA$ is a closed subset of $\bB_k$ which is normalized by $\bB$.  Let 
\begin{equation*}
\mathcal X= \{ (h\bB,x) \in \bH/\bB \times \bH_k \mid h^{-1}xh \in \bA \},
\end{equation*}
and write $\mathcal Y$ for the image of the projection map $p_2: \mathcal X \to \bH_k$.
Then $\mathcal X$ is a closed subset of $\bH/\bB \times \bH_k$.
The map $p_2$ is proper since $\bH/\bB$ is complete, and therefore $\mathcal Y$ is a closed subset of $\bH_k$.
It is clear that $\mathcal Y = \bH_k-\bH^{\bS}$, and the result follows.
\end{proof}

\section{A Section of $\Norm$}    \label{Section_Section}
 We now return to the situation of Section \ref{Norm_Corr}.
 Let $k=\dim W$ and let $S$ be a maximal $k$-torus in $H$.  We have defined a nondegenerate subspace $E$ of codimension $k$ in the previous section, so that $S$ is a maximal torus in
 $H^E$.
    
 \subsection{Relating $X$ to $W$}
Fix a map $\xi:X \to W$ such that ${\rm ker}\ \xi=E$.  It is necessarily a surjection.  We collect some properties of $\xi$.  

\begin{lemma}\label{kerxit} \label{invert}
\begin{enumerate}
\item $\xi^*: W' \to X$ is injective with image $E^\circ$.
\item $\xi \xi^*: W' \to W$ is an isomorphism.
\end{enumerate}
\end{lemma}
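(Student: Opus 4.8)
The plan is to reduce both statements to short linear-algebra arguments built on two nondegeneracy facts: that $\Phi$ restricts to a nondegenerate form on $X$ (so that $\xi^*$ is defined at all, and $\dim E^\circ=\dim X-\dim E$), and that $\Phi$ restricts to a nondegenerate form on $E$ (Proposition~\ref{Enondeg}, so that $X=E\oplus E^\circ$ and $E\cap E^\circ=0$). I would also use throughout that $\Phi$ induces a perfect pairing between $W$ and $W'$ --- this is implicit already in the formalism of Section~\ref{Preliminaries}, e.g.\ for $\eta^*$ to be well defined --- so in particular $\dim W'=\dim W=k$, and that $\xi$ is surjective, the latter because $\rank\xi=\dim X-\dim E=k=\dim W$.

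For part (1), I would first check the inclusion $\im\xi^*\subseteq E^\circ$ straight from the definition: for $w'\in W'$ and $e\in E=\ker\xi$ one has $\Phi(\xi^*(w'),e)=\Phi(w',\xi(e))=0$, so $\xi^*(w')$ is perpendicular to $E$. Next I would prove $\xi^*$ is injective: if $\xi^*(w')=0$ then $\Phi(w',\xi(x))=0$ for every $x\in X$, and since $\xi$ maps onto $W$ this says $w'$ is perpendicular to all of $W$; the perfect pairing of $W$ with $W'$ then forces $w'=0$. Consequently $\dim\im\xi^*=\dim W'=k=\dim E^\circ$, so the inclusion $\im\xi^*\subseteq E^\circ$ is necessarily an equality.

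For part (2), I would observe that $\xi\xi^*\colon W'\to W$ is a linear map between spaces of the same dimension $k$, so it is enough to see that it is injective. If $\xi\xi^*(w')=0$, then $\xi^*(w')\in\ker\xi=E$; but by part (1) also $\xi^*(w')\in E^\circ$, and $E\cap E^\circ=0$ because $E$ is nondegenerate, so $\xi^*(w')=0$, hence $w'=0$ by the injectivity just established. Therefore $\xi\xi^*$ is an isomorphism.

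I do not anticipate a real obstacle: the content lies entirely in keeping track of which subspaces are nondegenerate (so that the orthogonal complements have the expected dimensions and $E$ meets $E^\circ$ trivially) and in exploiting the surjectivity of $\xi$. The only genuinely external input is the nondegeneracy of $E$, already supplied by Proposition~\ref{Enondeg}.
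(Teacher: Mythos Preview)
Your proof is correct and follows essentially the same outline as the paper's: part (i) is declared ``immediate'' there, and for part (ii) the paper also reduces to injectivity of $\xi\xi^*$ and then to $\xi^*(w')=0$. The only variation is in that last step: where you observe $\xi^*(w')\in\ker\xi\cap\im\xi^*=E\cap E^\circ=0$, the paper instead computes $\Phi(\xi^*(w'),\xi^*(x'))=\Phi(\xi\xi^*(w'),x')=0$ for all $x'\in W'$ and invokes nondegeneracy of $E^\circ$ directly. Both routes rest on the same input (Proposition~\ref{Enondeg}), viewed from complementary sides; yours is arguably a touch cleaner.
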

\begin{proof}
 The first statement is immediate.  For the second, it is enough to prove $\xi \xi^*$ is injective.
  Suppose $\xi\xi^{*}(w')=0$ for some $w'\in W'$. For any $x\in E^{\circ}$, choose a $x'\in W'$ such that $x=\xi^{*}(x')$. Thus,
$$\Phi(\xi^{*}(w'), x)= \Phi(\xi^{*}(w'), \xi^{*}(x'))=\Phi(\xi\xi^{*}(w'), x')=0.$$Since $E^{\circ}$ is nondegenerate, $\xi^{*}(w')=0$.  Since $\xi^{*}$ is injective we have $w'=0$. 
\end{proof}

\begin{defn} Define $\ups=(\xi\xi^{*})^{-1}: W \to W'$ and $\xi^+= \xi^* \ups=\xi^{*}(\xi\xi^{*})^{-1}$.
\end{defn}

 Thus $\xi^{+}$ is a right inverse of $\xi$. 
 It is easy to see that $\xi^+ \xi=P_{E^{\circ}}$.   
 
\begin{defn}\label{xidef}
Define a map $\Xi: \End(X) \to \End(W)$ by $\Xi(A)=\xi A\xi^+$. Also define $\Xi^{+}: \End(W) \to \End(X)$ by $\Xi^{+}(A)=\xi^{+} A\xi$. 
\end{defn}

We will occasionally write ${}^\xi A=\Xi(A)$ for brevity.

\begin{prop}\label{propstab}
\begin{enumerate}
\item Let $A,B \in \End(X)$.  If $A$ or $B$ commute with $P_E$, then $\Xi(AB)=\Xi(A)\Xi(B)$.
\item $\Xi|_{M_E}$ is a group homomorphism from $M_E$ to $G$ with kernel $H^{E^\circ}$. 
\item $\Xi$ restricts to an injection from $H^E$ into $G$.
\end{enumerate}
\end{prop}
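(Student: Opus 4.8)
The plan is to verify the three items in order, exploiting the identities $\xi^+\xi = P_{E^\circ}$ and $\xi\xi^+ = \mathrm{id}_W$ established just above, together with the definition $\Xi(A) = \xi A \xi^+$.

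For (i), I would compute $\Xi(A)\Xi(B) = \xi A \xi^+ \xi B \xi^+ = \xi A P_{E^\circ} B \xi^+$. If $B$ commutes with $P_E$, then it commutes with $P_{E^\circ} = 1 - P_E$ as well, so $A P_{E^\circ} B = A B P_{E^\circ}$, and since $\xi P_{E^\circ} = \xi$ (because $\ker\xi = E$), this equals $\xi A B \xi^+ = \Xi(AB)$. The case where $A$ commutes with $P_E$ is symmetric, using $P_{E^\circ}\xi^+ = \xi^+$ in place of $\xi P_{E^\circ} = \xi$; note $\xi^+ = \xi^*\upsilon$ has image in $E^\circ$ by Lemma~\ref{kerxit}, so $P_{E^\circ}\xi^+ = \xi^+$.

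For (ii), every element of $M_E = H^{E^\circ} H^E$ preserves both $E$ and $E^\circ$, hence commutes with $P_E$; so by part (i), $\Xi$ is multiplicative on $M_E$. Since $\Xi(1) = \xi\xi^+ = \mathrm{id}_W$, it is a group homomorphism into $\GL(W) = G$ — one only needs that $\Xi(m)$ is actually invertible, with inverse $\Xi(m^{-1})$, which follows from multiplicativity. For the kernel: $m \in M_E$ lies in $\ker\Xi$ iff $\xi m \xi^+ = \mathrm{id}_W$, i.e. $\xi m \xi^+ = \xi \xi^+$; precomposing with $\xi$ gives $\xi m P_{E^\circ} = \xi P_{E^\circ}$, i.e. $\xi(m-1)|_{E^\circ} = 0$. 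Since $m$ preserves $E^\circ$ and $\ker(\xi|_{E^\circ}) = E \cap E^\circ = 0$ (as $E$ is nondegenerate by Proposition~\ref{Enondeg}), this forces $m$ to fix $E^\circ$ pointwise; as $m$ also preserves $E$, it already fixes $E$ pointwise when restricted from $H^E$ — more precisely, writing $m = m_1 m_2$ with $m_1 \in H^{E^\circ}$, $m_2 \in H^E$, the condition $(m-1)|_{E^\circ} = 0$ combined with $m_2|_{E^\circ} = \mathrm{id}$ gives $m_1|_{E^\circ} = \mathrm{id}$, but $m_1$ already fixes $E$, so $m_1 = 1$; hence $\ker\Xi = H^{E^\circ}$.

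For (iii), $\Xi|_{H^E}$ is a homomorphism by (ii), and its kernel is $H^E \cap H^{E^\circ} = \{1\}$, so it is injective. The step I expect to require the most care is the kernel computation in (ii): one must track the decomposition $m = m_1 m_2$ and use nondegeneracy of $E$ (equivalently of $E^\circ$) to conclude that $\xi$ restricted to $E^\circ$ is injective, so that vanishing of $\xi(m-1)$ on $E^\circ$ genuinely forces $m$ to act trivially there. Everything else is formal manipulation of the projection identities.
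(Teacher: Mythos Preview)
The paper does not actually supply a proof of this proposition; it is stated and the reader is referred to Figure~\ref{Fig_1} for intuition.  Your argument is essentially the intended verification and is correct in outline.

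Two small points on part (ii).  First, your decomposition step has the labels swapped: you declare $m_1 \in H^{E^\circ}$ and $m_2 \in H^E$, but then invoke ``$m_2|_{E^\circ} = \mathrm{id}$'' and ``$m_1$ already fixes $E$'', which are the defining properties of the \emph{other} factor.  With the labels corrected the argument works.  In fact the decomposition is unnecessary: once you have established that $m$ fixes $E^\circ$ pointwise, you have $m \in H^{E^\circ}$ by definition, and the inclusion $\ker\Xi|_{M_E} \subseteq H^{E^\circ}$ is finished.  Second, you only argue one inclusion; you should also remark that conversely any $m \in H^{E^\circ}$ satisfies $m\xi^+ = \xi^+$ (since $\xi^+$ has image in $E^\circ$), whence $\Xi(m) = \xi\xi^+ = 1_W$.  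This is immediate but worth stating.
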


(Also see Figure \ref{Fig_1} in Section \ref{Picture_Pages}.)

\begin{defn}
 Write $\tau: \End(W) \to \End(W)$ for the antiinvolution
$$\tau(A)=\upsilon^{-1}A^{*}\upsilon,$$
and $\theta: G \to G$ for the involution $\theta(g)=\tau(g)^{-1}$.

Define a bilinear form $\Psi_{W}$ on $W$ via
\begin{equation*}
\begin{split}
\Psi_{W}(w_1,w_{2}) &=\Phi(\xi^+ w_1,\xi^+w_2) \\	
					&= \Phi(w_1, \upsilon w_2),\\
\end{split}
\end{equation*}
for $w_1,w_2 \in W$.
\end{defn}
 
\begin{prop}
\begin{enumerate}
\item For all $A \in \End(X)$, we have $\tau(\Xi(A))=\Xi(A^*)$.
\item The form $\Psi_W$ is nondegenerate and $\Psi_{W}(gw_{1},w_{2})=\Psi_{W}(w_{1},\tau(g)w_{2})$.
\item The group $G^\theta$  of fixed points is equal to $\Isom(W,\Psi_W)$.
\end{enumerate}
\end{prop}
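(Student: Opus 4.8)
The plan is to establish the three assertions in order, each following quickly from the definitions together with the previously-established identities for $\Xi$, $\xi^+$, and the adjoint.

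\medskip

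\noindent\emph{Proof proposal.}
For part (i), I would simply unwind the definition of $\tau$ on the element $\Xi(A) = \xi A \xi^+$. We have $\tau(\Xi(A)) = \upsilon^{-1} (\xi A \xi^+)^* \upsilon = \upsilon^{-1} (\xi^+)^* A^* \xi^* \upsilon$. Now $\xi^+ = \xi^*\upsilon$ (with $\upsilon = (\xi\xi^*)^{-1}$ symmetric up to the form on $W'$), so $(\xi^+)^* = \upsilon^* \xi^{**} = \upsilon \xi$ after checking $\upsilon$ is self-adjoint with respect to the relevant forms; substituting gives $\tau(\Xi(A)) = \upsilon^{-1}\upsilon\,\xi A^* \xi^*\upsilon = \xi A^* \xi^+ = \Xi(A^*)$. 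The one point requiring care is the self-adjointness/transpose bookkeeping for $\upsilon$ and $\xi^*$, i.e.\ that $(\xi^*)^* = \xi$ as maps between $X$ and $W'$ under $\Phi$; this is where I expect the only real friction, so I would state those adjoint identities explicitly before the computation.

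\medskip

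For part (ii), nondegeneracy of $\Psi_W$ is immediate from the first displayed formula $\Psi_W(w_1,w_2) = \Phi(\xi^+ w_1, \xi^+ w_2)$: since $\xi^+$ is injective with image $E^\circ$ (Lemma \ref{invert}), and $E^\circ$ is nondegenerate (Proposition \ref{Enondeg}), the pullback form $\Psi_W$ is nondegenerate. For the adjunction formula $\Psi_W(gw_1, w_2) = \Psi_W(w_1, \tau(g)w_2)$, I would use the second expression $\Psi_W(w_1,w_2) = \Phi(w_1, \upsilon w_2)$ and the defining property of $\tau$, namely $\tau(g) = \upsilon^{-1} g^* \upsilon$: then $\Psi_W(w_1, \tau(g)w_2) = \Phi(w_1, \upsilon \upsilon^{-1} g^* \upsilon w_2) = \Phi(w_1, g^*\upsilon w_2) = \Phi(g w_1, \upsilon w_2) = \Psi_W(gw_1, w_2)$, using that $g^*$ is the $\Phi$-adjoint of $g$ on $W$ (here $W$ is isotropic in $V$, so $g^*$ is defined via the pairing of $W$ with $W'$, but the formula $\eta^*$-type adjunction from the Preliminaries applies). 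I would also want to reconcile the two displayed formulas for $\Psi_W$, which amounts to $\Phi(\xi^+ w_1, \xi^+ w_2) = \Phi(w_1, \upsilon w_2)$: indeed $\xi^+ = \xi^*\upsilon$ so the left side is $\Phi(\xi^*\upsilon w_1, \xi^+ w_2) = \Phi(\upsilon w_1, \xi\xi^+ w_2) = \Phi(\upsilon w_1, w_2) = \Phi(w_1, \upsilon w_2)$ once $\upsilon$ is seen to be self-adjoint, which itself follows since $\xi\xi^*$ is self-adjoint.

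\medskip

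For part (iii), the inclusion $G^\theta \subseteq \Isom(W,\Psi_W)$ is a direct consequence of part (ii): if $\theta(g) = g$, i.e.\ $\tau(g) = g^{-1}$, then $\Psi_W(gw_1, gw_2) = \Psi_W(w_1, \tau(g) g w_2) = \Psi_W(w_1, w_2)$. Conversely, if $g$ preserves $\Psi_W$, the same computation forces $\Psi_W(w_1, \tau(g)g w_2) = \Psi_W(w_1, w_2)$ for all $w_1, w_2$, and nondegeneracy of $\Psi_W$ from part (ii) gives $\tau(g)g = 1$, i.e.\ $\theta(g) = \tau(g)^{-1} = g$. Thus $G^\theta = \Isom(W,\Psi_W)$. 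The main obstacle throughout is purely the adjoint/transpose bookkeeping of part (i)—keeping straight the $\Phi$-pairings between the four spaces $W$, $W'$, $X$, and $E^\circ$—and once those identities ($\upsilon$ self-adjoint, $\xi^{**} = \xi$, $\xi\xi^+ = 1_W$, $\xi^+ = \xi^*\upsilon$) are laid out, parts (ii) and (iii) are short formal deductions.
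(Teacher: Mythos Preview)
Your argument is correct in all three parts, and it is exactly the kind of direct unwinding of definitions the paper has in mind; the paper in fact omits the proof entirely, treating the proposition as a routine consequence of the definitions of $\Xi$, $\xi^+$, $\upsilon$, and $\tau$. Your care with the adjoint identities $(\xi^*)^*=\xi$, $(\xi\xi^*)^*=\xi\xi^*$ (hence $\upsilon^*=\upsilon$), and $\xi\xi^+=1_W$ is precisely the bookkeeping needed, and once those are in place parts (ii) and (iii) follow formally as you indicate.
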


Let ${}^\xi S=\Xi(S)$, a maximal torus in $G^{\theta}$. Set $S_{G}=Z_{G}({}^\xi S)$.  Note that
\begin{equation*}
 {}^\xi \bS=(\bS_{\bG} \cap \bG^{\theta})^\circ.
\end{equation*}

 \subsection{A section of $\Norm$ over $H^E$} 

\begin{lemma} Let $A \in \End(X)$ so that $A$ commutes with $P_E$ and $A|_{E^\circ}$ is invertible.  Then $\Xi(A)$ is invertible.
\end{lemma}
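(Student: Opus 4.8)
The plan is to show that $\Xi(A) = \xi A \xi^+$ is invertible by exhibiting an explicit inverse, namely $\Xi(A^{-1}|_{E^\circ})$ suitably interpreted, or equivalently by using the multiplicativity established in Proposition \ref{propstab}(i). Since $A$ commutes with $P_E$, it preserves both $E$ and $E^\circ$; let $B \in \End(X)$ be the endomorphism which agrees with $(A|_{E^\circ})^{-1}$ on $E^\circ$ and with the identity on $E$. Then $B$ also commutes with $P_E$, and $AB = BA = P_{E^\circ} + P_E = 1$ on $E^\circ$ and on $E$ respectively — more precisely, $AB$ restricted to $E^\circ$ is the identity and $AB$ restricted to $E$ is $A|_E$, which is not necessarily the identity. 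So I would instead arrange $B$ to be identity on $E$ so that $AB = BA$ acts as the identity on $E^\circ$ and as $A|_E$ on $E$; the point is that on the relevant subspace $E^\circ$ we get the identity.

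The key computation: since $\xi^+\xi = P_{E^\circ}$ and $\xi\xi^+ = 1_W$ (recall $\xi^+$ is a right inverse of $\xi$ by the definition following Lemma \ref{invert}), and since $A$, $B$ commute with $P_E$, Proposition \ref{propstab}(i) gives $\Xi(A)\Xi(B) = \Xi(AB)$. Now $AB$ commutes with $P_E$ and restricts to the identity on $E^\circ$; hence $\Xi(AB) = \xi (AB) \xi^+ = \xi (AB) P_{E^\circ} \xi^+ = \xi P_{E^\circ} \xi^+ = \xi \xi^+ = 1_W$, using that $\xi^+$ has image in $E^\circ$ (its image is $\xi^*(\xi\xi^*)^{-1}W \subseteq \xi^* W' = E^\circ$ by Lemma \ref{invert}(i)), so $P_{E^\circ}\xi^+ = \xi^+$, and that $\xi$ kills $E$. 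Symmetrically $\Xi(B)\Xi(A) = \Xi(BA) = 1_W$. Therefore $\Xi(A)$ is invertible with inverse $\Xi(B)$.

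The only mild subtlety — and the step I'd expect to need the most care — is making sure the hypotheses of Proposition \ref{propstab}(i) genuinely apply: that requires $A$ (or $B$) to commute with $P_E$, which is given for $A$ and inherited by $B$ from the fact that $B$ is built block-diagonally with respect to the decomposition $X = E \oplus E^\circ$. Once that is in place, everything else is the bookkeeping identities $\xi^+\xi = P_{E^\circ}$, $\xi\xi^+ = 1_W$, $\im \xi^+ \subseteq E^\circ$, and $\ker \xi = E$, all already recorded in the excerpt. So the proof is short and the real content is simply packaging the block-inverse of $A$ correctly.
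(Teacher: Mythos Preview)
Your proof is correct and follows essentially the same approach as the paper: construct a block-diagonal $B \in \End(X)$ with $B|_{E^\circ}=(A|_{E^\circ})^{-1}$, observe that $B$ commutes with $P_E$ (equivalently $P_{E^\circ}$), and invoke the multiplicativity of $\Xi$ from Proposition~\ref{propstab}(i) to conclude that $\Xi(B)$ inverts $\Xi(A)$. The only cosmetic difference is that the paper sets $B|_E=0$ while you set $B|_E=1_E$; since $\ker\xi=E$ and $\im\xi^+\subseteq E^\circ$, this choice is immaterial to $\Xi(B)$.
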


\begin{proof} Define $B \in \End(X)$ so that $B|_{E^\circ}=(A|_{E^\circ})^{-1}$ and $B|_E=0$.  Since $B$ commutes with $\xi^+ \xi= P_{E^\circ}$, we see that $\Xi(B)$ is the inverse of $\Xi(A)$.
\end{proof}

\begin{defn}  
Let $h \in M_E$ with $h-1|_{E^\circ}$ invertible.  
Define $h_G=\left( \Xi(h-1) \right)^{-1} \in G$.
\end{defn}

\begin{prop} \label{About_h_G}
Let $h \in M_E$ with $h-1|_{E^\circ}$ invertible.  We have
\begin{enumerate}
\item $1+h_G+\tau(h_G)=0$.
\item $h_G \cdot \theta(h_G)=-\Xi(h^{-1})$.
\item If $h$ centralizes $S$, then $h_G \in S_G$.
\end{enumerate}
\end{prop}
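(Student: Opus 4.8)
The plan is to rewrite $h_G$, $\tau(h_G)$ and $\theta(h_G)$ purely in terms of the linear map $\Xi$, and then reduce each of the three identities to an elementary computation with the endomorphism $h|_{E^\circ}$ of $E^\circ$. Throughout I use that $h\in M_E$ preserves $E$ and its nondegenerate complement $E^\circ$, so $h$, and hence $A:=h-1$, commutes with $P_E$ (and with $P_{E^\circ}$).

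First I would assemble the ingredients. By hypothesis $A|_{E^\circ}$ is invertible, so the Lemma just above applies: $h_G=\Xi(A)^{-1}=\Xi(B_1)$, where $B_1\in\End(X)$ inverts $A$ on $E^\circ$ and is $0$ on $E$. Since $\tau$ is an antiinvolution with $\tau(\Xi(C))=\Xi(C^{*})$ for all $C\in\End(X)$ (recorded above) and $h$ is an isometry, $A^{*}=h^{*}-1=h^{-1}-1$; hence $\tau(h_G)=\tau(\Xi(A))^{-1}=\Xi(h^{-1}-1)^{-1}=\Xi(B_2)$, with $B_2$ the analogous partial inverse of $h^{-1}-1$ (note $(h^{-1}-1)|_{E^\circ}=-h^{-1}(h-1)|_{E^\circ}$ is invertible), and $\theta(h_G)=\tau(h_G)^{-1}=\Xi(h^{-1}-1)$. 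I also use that $\Xi$ is linear, that $\Xi(1_X)=\Xi(P_{E^\circ})=1_W$ and $\Xi(P_E)=0$ (as $P_E$ kills $E^\circ=\im\xi^{+}$, by Lemma \ref{kerxit}), and that $\Xi(CD)=\Xi(C)\Xi(D)$ whenever $C$ or $D$ commutes with $P_E$ (Proposition \ref{propstab}).

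Granting this, part (i) reads $1_W+h_G+\tau(h_G)=\Xi(1_X+B_1+B_2)$ by linearity; the endomorphism $1_X+B_1+B_2$ is the identity on $E$ and on $E^\circ$ equals $1+(h-1)^{-1}+(h^{-1}-1)^{-1}$, which vanishes since $(h^{-1}-1)^{-1}=-(h-1)^{-1}h$ there. Hence $1_X+B_1+B_2=P_E$ and the sum is $\Xi(P_E)=0$. For part (ii), as $B_1$ commutes with $P_E$ we get $h_G\,\theta(h_G)=\Xi(B_1)\Xi(h^{-1}-1)=\Xi\big(B_1(h^{-1}-1)\big)$; the operator $B_1(h^{-1}-1)$ is $0$ on $E$ and equals $(h-1)^{-1}(h^{-1}-1)=-h^{-1}$ on $E^\circ$, i.e.\ equals $-h^{-1}P_{E^\circ}$, so applying $\Xi$ and using $\Xi(P_{E^\circ})=1_W$ yields $-\Xi(h^{-1})$. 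For part (iii), if $h$ centralizes $S$ then each $s\in S\subseteq H^E$ fixes $E$ pointwise, hence commutes with $P_E$, and $(h-1)s=s(h-1)$; applying $\Xi$ gives $\Xi(h-1)\Xi(s)=\Xi(s)\Xi(h-1)$ for all $s$, so $h_G=\Xi(h-1)^{-1}$ commutes with ${}^\xi S=\Xi(S)$ and thus lies in $Z_G({}^\xi S)=S_G$.

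I do not expect a serious obstacle; the points requiring care are that $\Xi(h-1)^{-1}$ is \emph{not} $\Xi((h-1)^{-1})$ — $h-1$ is singular on $E$ — so the inverse must be routed through the partial inverses $B_1,B_2$ together with the projection-compatibility of $\Xi$ from Proposition \ref{propstab}, and that elements of $M_E$ only preserve $E$ whereas elements of $H^E$ (in particular of $S$) fix it pointwise, which is exactly what validates the various ``commutes with $P_E$'' hypotheses. The fussiest bookkeeping is threading $\theta$, $\tau$ and the adjoint through the partial inverses in part (ii).
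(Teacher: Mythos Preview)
Your proof is correct and follows essentially the same strategy as the paper's: both reduce the identities to the elementary fact $(h-1)(h^{-1}-1)+(h-1)+(h^{-1}-1)=0$ (equivalently $1+(h-1)^{-1}+(h^{-1}-1)^{-1}=0$ on $E^\circ$), together with the multiplicativity of $\Xi$ for maps commuting with $P_E$. The only cosmetic difference is that you work via the partial inverses $B_1,B_2$ on $E^\circ$ and then apply $\Xi$, whereas the paper stays in $\End(W)$ and simply multiplies $1+h_G+\tau(h_G)$ through by the invertible element $\Xi(h-1)\Xi(h^{-1}-1)$ to clear denominators; your version is a bit more explicit, particularly for parts (ii) and (iii), which the paper dismisses as ``similar'' and ``easy.''
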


{\bf Remark:}  This second property points to the usage of the word ``Norm'' in this subject.
 
\begin{proof}
For the first statement, multiply the expression
\begin{equation*}
1+(\Xi(h-1))^{-1}+(\Xi(h^{-1}-1))^{-1}
\end{equation*}
by $\Xi(h-1)\Xi(h^{-1}-1)$ to obtain
\begin{equation*}
\Xi(h-1)\Xi(h^{-1}-1)+\Xi(h^{-1}-1)+\Xi(h-1)=0.
\end{equation*}
The second statement is similar, and the third is easy.
\end{proof}

We can now construct the section that we want.  

\begin{defn}  
Let $h \in H^E$ with $h-1|_{E^\circ}$ invertible.  Let  $\eta_S(h)=h_G \ups^{-1}$.
\end{defn}
Thus $\eta_S(h): W' \to W$ is given by
\begin{equation*}
\eta(h)=\eta_S(h)=(\xi(h-1)\xi^+)^{-1} \xi \xi^*.
\end{equation*}

{\bf Remark:}  One could also write, for example, $\eta(h)=\xi (h-1-P_E)^{-1}\xi^*$.

\bigskip

Finally we define
\begin{equation*}
n(h)=n_{S}(h)=n(\xi,\eta_S(h)).
\end{equation*}
 
Note that any $h=\gm \in S_{r}$ satisfies the hypothesis.

\begin{prop} \label{section} Let $h \in H^E$ with $h-1|_{E^\circ}$ invertible.  Then $n_S(h) \in N'$ and $\Norm(n_S(h))=h$.
\end{prop}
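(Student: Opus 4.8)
The plan is to verify the two assertions directly from the formula $\eta := \eta_S(h) = h_G\ups^{-1}$ for the section, where $h_G = \bigl(\Xi(h-1)\bigr)^{-1}$.

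First I would settle that $\eta$ is invertible, which is what distinguishes $N'$ once we know $n(\xi,\eta)\in N$. Since $h\in H^E\subseteq M_E$, it commutes with $P_E$, and by hypothesis $(h-1)|_{E^\circ}$ is invertible; by the lemma just before the definition of $h_G$, $\Xi(h-1)=\xi(h-1)\xi^+$ is invertible, hence so is $h_G$. As $\ups=(\xi\xi^*)^{-1}$ is invertible by Lemma \ref{invert}, the product $\eta=h_G\ups^{-1}$ is invertible. Thus it remains only to verify the defining relation $\eta+\eta^*+\xi\xi^*=0$ of (\ref{defn}), which places $n(\xi,\eta)$ in $N$.

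The heart of the matter is computing $\eta^*$. I would first record that $\ups^{-1}=\xi\xi^*\colon W'\to W$ is self-adjoint for the $\Phi$-pairing of $W$ with $W'$, i.e. $\Phi(u,\xi\xi^*v)=\Phi(\xi\xi^*u,v)$ for $u,v\in W'$: indeed, by the defining property of $\xi^*$ both sides equal $\Phi(\xi^*u,\xi^*v)$, the two symmetry-sign factors of $\Phi$ cancelling. Plugging this into the relation $\Phi(\eta^*w_1',w_2')=\Phi(w_1',h_G\ups^{-1}w_2')$ and moving $h_G$ across via its adjoint $h_G^*\colon W'\to W'$, I obtain $\Phi(\eta^*w_1',w_2')=\Phi(\ups^{-1}h_G^*w_1',w_2')$, hence $\eta^*=\ups^{-1}h_G^* = (\ups^{-1}h_G^*\ups)\ups^{-1}=\tau(h_G)\ups^{-1}$ by the definition of $\tau$. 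Therefore $\eta+\eta^*=\bigl(h_G+\tau(h_G)\bigr)\ups^{-1}=-\ups^{-1}=-\xi\xi^*$, the middle equality being Proposition \ref{About_h_G}(1). This is exactly (\ref{defn}), so $n_S(h)\in N'$. For the Norm identity, with $\eta^{-1}=\ups h_G^{-1}$, $\xi^+=\xi^*\ups$ and $h_G^{-1}=\Xi(h-1)=\xi(h-1)\xi^+$, I compute $\Norm(n_S(h))-1 = \xi^*\eta^{-1}\xi = \xi^+h_G^{-1}\xi = \xi^+\xi\,(h-1)\,\xi^+\xi = P_{E^\circ}(h-1)P_{E^\circ}$; since $h\in H^E$ is an isometry fixing $E$ pointwise it commutes with $P_E$ and $P_{E^\circ}$ and kills $h-1$ on $E$, so this collapses to $(h-1)P_{E^\circ}=(h-1)(1-P_E)=h-1$, giving $\Norm(n_S(h))=h$.

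The main obstacle is the evaluation of $\eta^*$: one must keep straight the three distinct notions of adjoint at play — the adjoint $\xi^*$ of $\xi\colon X\to W$, the adjoint $g^*\colon W'\to W'$ of an endomorphism $g$ of $W$ relative to the $\Phi$-pairing of $W$ and $W'$, and the antiinvolution $\tau$ on $\End(W)$ — and confirm they interlock as $\eta^*=\tau(h_G)\ups^{-1}$. Once the conventions are aligned and the self-adjointness of $\xi\xi^*$ is in hand, the remainder is routine bookkeeping with the identities $\xi^+\xi=P_{E^\circ}$, $\xi^+=\xi^*\ups$, and Proposition \ref{About_h_G}(1).
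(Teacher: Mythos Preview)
Your proof is correct and follows essentially the same route as the paper: both reduce the relation $\eta+\eta^*+\xi\xi^*=0$ to Proposition~\ref{About_h_G}(i) and compute $\Norm(n_S(h))=1+P_{E^\circ}(h-1)P_{E^\circ}=h$. The paper's version is terser, simply citing Proposition~\ref{About_h_G} for the first part, whereas you supply the intermediate step $\eta^*=\tau(h_G)\ups^{-1}$ (via self-adjointness of $\xi\xi^*$) that the paper leaves implicit.
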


\begin{proof}
The pair $(\xi,\eta(h))$ satisfies (\ref{defn}) by Proposition \ref{About_h_G}.  We compute
\begin{equation*}
\begin{split}
\Norm(n(h)) &= 1+\xi^* \eta(h)^{-1} \xi \\
			&=1+ P_{E^\circ} h P_{E^\circ}-P_{E^\circ} \\
			&= h, \\
\end{split}
\end{equation*}
as desired.  
\end{proof}
 
\subsection{Matrices} \label{MatrixSection}
In this section we describe the map $n \mapsto n(\gm)$ in terms of matrices.  For simplicity, we assume that $\Phi$ is split symmetric or antisymmetric.
Suppose $D=\dim V$, $2n=\dim W$, and $r=\dim E$.  Thus $\dim X=r+2n$ and $D=r+6n$.

For a number $m$, write $J_+(m)$ for the $m \times m$ symmetric matrix
\begin{equation*}
J_+(m)= \left(\begin{array}{ccccc}
&&&&1\\
&&&1&  \\
&&...&&\\
&1&&&\\
1&&&&
\end{array}\right).
\end{equation*}

For an even number $m$, write $J_-(m)$ for the $m \times m$ antisymmetric matrix
\begin{equation*}
J_-(m)= \left(\begin{array}{ccccc}
&&&&1\\
&&&-1&  \\
&&...&&\\
&1&&&\\
-1&&&&
\end{array}\right).
\end{equation*}
 
In the symmetric case (resp. the antisymmetric case) choose ordered bases $w_{1},...,w_{2n}$ and $w_{1}',...,w_{2n}'$ of $W$ and $W'$ such that $\Phi(w_{i},w_{j}')=\delta_{ij}$ (resp., $\Phi(w_{i},w_{j}')=(-1)^j\delta_{ij}$).
 
 Pick a basis $e_1, \ldots, e_r$ of $E$ so that $J_+(r)$ (resp. $J_-(r)$) is the matrix of $\Phi_E$ with respect to this basis, and a basis $f_1, \ldots, f_n, f_1', \ldots, f_n'$ of $E^\circ$ so that
$\Phi(f_{i},f_{j}')=\delta_{ij}$ (resp., $\Phi(f_{i},f_{j}')=(-1)^j\delta_{ij}$). We will use the ordered basis 
\begin{equation*}
w_{1},...,w_{2n}, f_1, \ldots, f_n, e_1, \ldots, e_r, f_n', \ldots, f_1', w_{2n}', \ldots, w_1'
\end{equation*}
of $V$, and the ordered basis $ f_1, \ldots, f_n, e_1, \ldots, e_r, f_n', \ldots, f_1'$ of $X$.
 Then $J_{\pm }(D)$ is the matrix of $\Phi$ with respect to this basis.
 
Define $\xi: X \to W$ by $\xi(f_i)=w_i$, $\xi(f_i')=w_{2n+1-i}$ and $\xi|_E=0$.  We represent $\xi$ with the matrix $\xi=\left(\begin{array}{ccc}
I_{n}&0&0\\
0&0&I_{n}  
\end{array}\right)$, and $\xi^*: W' \to X$ with the matrix  \newline $\xi^{*}=\left(\begin{array}{cc}
I_{n}&0\\
0&0\\
0&I_{n}  
\end{array}\right)$.  Next, consider the torus $S$ consisting of matrices of the form
Let 
\begin{equation*}
\gamma=\left(\begin{array}{ccccccc}
t_{1}&&&&&&\\
&\ddots&&&&&  \\
&&t_{n}&&&&\\ 
&&&I&&&\\
&&&&t_{n}^{-1}&&\\
&&&&&\ddots&\\
&&&&&&t_{1}^{-1}
\end{array}\right),
\end{equation*}
 relative to the above basis of $X$.
We then have
\begin{equation*}
\gamma_G=\left(\begin{array}{cccccc}
(t_{1}-1)^{-1}&&&&&\\
&\ddots&&&&  \\
&&(t_{n}-1)^{-1}&&&\\ 
&&&(t_{n}^{-1}-1)^{-1}&&\\
&&&&\ddots&\\
&&&&&(t_{1}^{-1}-1)^{-1}
\end{array}\right),
\end{equation*}
relative to the above basis of $W$.

Finally, our matrix $n_S(\gamma) \in N$ is written by fitting together the above matrices via

\begin{equation*}
n_S(\gm)=\left(\begin{array}{ccc}
I& \xi &\gm_G \\
&I&-\xi^*  \\
&&I
\end{array}\right).
\end{equation*}
(Our choice of basis obviates the need for the factor $\upsilon^{-1}$.)
 
\section{The $\Sha_S$ map: Algebraic Theory} \label{alg_theory}

\subsection{Decomposition of $N'$}

Let $S$ be a maximal $k$-torus of $H$ defined over $F$.  

\begin{defn} Put
\begin{equation*}
N^S= \{ n(\xi,\eta) \in N' \mid \Norm(\xi,\eta) \text{ is conjugate to an element of }  S_r \}.
\end{equation*}
\end{defn}
 
\begin{defn} Define the map $\Sha_S: M \times S_r \to N$ by
\begin{equation} \label{fake_Sha}
\begin{split}
\Sha_S(m(g,h) \times \gamma) &= \Ad(m(g,h))n_S(\gm)\\
							&= n(g\xi h^{-1},g\eta_S(\gamma)g^{*})\\
							&=n(g \xi h^{-1}, g (\gm)_G \ups^{-1} g^*). \\
\end{split}
\end{equation}
\end{defn}

We will freely drop the subscript ``$S$'' from  $\eta_S$, $n_S$, and $\Sha_S$ when it becomes cumbersome.

\begin{prop}\label{surjprop}  We have
\begin{enumerate}
\item $\Norm(\Sha_S(m(g,h) \times  \gm))=  h \gm h^{-1}$.
\item The image of $\Sha_{S}$ is equal to $N^S$.
\end{enumerate}
\end{prop}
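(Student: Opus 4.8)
The plan is to prove the two claims of Proposition~\ref{surjprop} essentially by unwinding definitions and applying the results of Sections \ref{Norm_Corr} and \ref{Section_Section}.

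\textbf{Part (i).} First I would compute $\Norm(\Sha_S(m(g,h) \times \gm))$ directly. By \eqref{fake_Sha}, $\Sha_S(m(g,h)\times\gm) = \Ad(m(g,h))n_S(\gm)$. Now apply Lemma~\ref{basicnorm}(ii), which says $\Norm(\Ad(m(g,h))n) = \Ad(h)\Norm(n)$, together with Proposition~\ref{section}, which gives $\Norm(n_S(\gm)) = \gm$ (valid since $\gm \in S_r$ satisfies the hypothesis $\gm - 1|_{E^\circ}$ invertible). Combining, $\Norm(\Sha_S(m(g,h)\times\gm)) = \Ad(h)\gm = h\gm h^{-1}$. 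This part is immediate.

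\textbf{Part (ii).} For the image of $\Sha_S$, I would prove the two inclusions separately. The inclusion ``$\subseteq$'': given any $m(g,h) \times \gm \in M \times S_r$, part (i) shows $\Norm(\Sha_S(m(g,h)\times\gm)) = h\gm h^{-1}$, which is conjugate to $\gm \in S_r$; also $\Sha_S(m(g,h)\times\gm) = \Ad(m(g,h))n_S(\gm) \in N'$ since $N'$ is $\Ad(M)$-stable (as $\eta \mapsto g\eta g^*$ preserves invertibility) and $n_S(\gm) \in N'$ by Proposition~\ref{section}. Hence $\Sha_S(m(g,h)\times\gm) \in N^S$.

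The inclusion ``$\supseteq$'' is the substantive direction, and the main obstacle. Take $n = n(\xi_0, \eta_0) \in N^S$, so $\Norm(n)$ is conjugate to some $\gm \in S_r$; say $\Norm(n) = h_0 \gm h_0^{-1}$ with $h_0 \in H$. Replacing $n$ by $\Ad(m(1,h_0^{-1}))n$ (which by part (i) and Lemma~\ref{basicnorm}(ii) changes $\Norm(n)$ to $\gm$, and stays in $N^S$), I may assume $\Norm(n(\xi_0,\eta_0)) = \gm$. Now I need to produce $g \in G = \GL(W)$ with $g\xi_0 h^{-1} = \xi$ (our fixed model $\xi$ with $\ker\xi = E$) for some $h \in H^E$-type adjustment, and $g\eta_0 g^* = \eta_S(\gm)$. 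The key tool is Corollary~\ref{cogentoclass}: since $\xi_0: X \to W$ is a surjection (as $\eta_0$ invertible forces, via \eqref{defn}, $\xi_0\xi_0^*$ invertible hence $\xi_0$ surjective) and $\xi$ is a surjection with $(\xi_0,\eta_0)$ and some $(\xi,\eta)$ both satisfying \eqref{defn}, one matches them up to $H$-conjugacy of the Norm; but I actually want equality of the $n$'s up to $\Ad(M)$. The cleaner route: use Lemma~\ref{basicnorm}(iii) and Lemma~\ref{gentoclass} to find $g \in G$ so that $g\xi_0$ has kernel exactly $E$ and $g\eta_0 g^*$ matches $\ups^{-1}$-normalization, then absorb the remaining discrepancy between $g\xi_0$ and $\xi$ into an element $h \in H$ fixing the relevant structure, showing it lies in $H^E$ because both kernels equal $E$. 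Checking that the resulting $h$ indeed lies in $H^E$ (not merely $H$) and that the $\eta$-component then forces $\eta_0$ to equal $\eta_S(\gm)$ up to the $G$-action — via the uniqueness built into the formula $\eta_S(h) = (\xi(h-1)\xi^+)^{-1}\xi\xi^*$ and Proposition~\ref{section}'s computation $\Norm(n_S(h)) = h$ — is where the care is needed, and I expect this matching argument (tracking $\xi$ and $\eta$ simultaneously under the $G\times H$ action) to be the crux.
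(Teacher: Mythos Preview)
Your Part~(i) and the inclusion $\im\Sha_S\subseteq N^S$ in Part~(ii) are correct and coincide with the paper's argument.

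For the reverse inclusion your preliminary reduction (conjugating by $m(1,h_0^{-1})$ to assume $\Norm(n(\xi_0,\eta_0))=\gm$) is a harmless simplification; the paper's proof is essentially yours with an extra $h$ carried along.  However, several points in your sketch need correction.

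\begin{enumerate}
\item Your justification that $\xi_0$ is surjective is wrong: invertibility of $\eta_0$ does \emph{not} force $\xi_0\xi_0^*$ invertible via \eqref{defn} (take $\xi_0=0$ and $\eta_0$ skew-adjoint and invertible).  The correct reason is that $\Norm(n)-1=\xi_0^*\eta_0^{-1}\xi_0$ is conjugate to $\gm-1$, which has rank $k=\dim W$ since $\gm\in S_r$; this forces $\rank\xi_0=k$.
\item The step you flag as ``the crux'' is exactly what is missing.  The paper supplies it explicitly: from $\xi_0^*\eta_0^{-1}\xi_0=\xi^*\eta(\gm)^{-1}\xi$ one applies $\xi$ on the left and $\xi^*$ on the right to see that $\xi_0\xi^*:W'\to W$ is an isomorphism, and then \emph{defines} $g=\xi_0\xi^+\in G$.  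A direct check using the displayed equation gives $\eta_0=g\eta(\gm)g^*$.  Your outline (``find $g$ so that $g\eta_0 g^*$ matches $\ups^{-1}$-normalisation'') has the direction reversed and gives no construction.
\item Once $g$ is in hand, the $\xi$-component is handled as you say via Lemma~\ref{gentoclass}: one produces $h_1\in H$ with $\xi_0=g\xi h_1^{-1}$.  But there is no need --- and no guarantee --- that $h_1\in H^E$.  Matching kernels only shows $h_1(E)=E$, i.e.\ $h_1\in M_E$, and even that is irrelevant: the definition of $\Sha_S$ allows arbitrary $h\in H$.  So drop the $H^E$ claim entirely.
\end{enumerate}

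In short, your plan is on the right track and is the same approach as the paper's, but the explicit formula $g=\xi_0\xi^+$ and its verification are the substance of the argument, not a detail to be filled in later.
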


\begin{proof}
The first statement follows from Lemma \ref{basicnorm} and Proposition \ref{section}.  It follows that the image of $\Sha_S$ is contained in $N^S$.
 
Now let $n=n(\xi_{1},\eta_{1})\in N^{S}$.  Then there are $h \in H$ and $\gm \in S_r$ so that $h(\textrm{Norm}(\xi_{1},\eta_{1}))h^{-1}=\gamma$.  Since we also have $\gm=\Norm(\xi,\eta(\gm))$, we obtain the equation
\begin{equation*}
h\xi_{1}^{*}\eta_{1}^{-1}\xi_{1}h^{-1}=\xi^{*}\eta(\gamma)^{-1}\xi.
\end{equation*}
Thus
\begin{equation*}
\xi h\xi_{1}^{*}\eta_{1}^{-1}\xi_{1}h^{-1} \xi^*=\xi \xi^{*}\eta(\gamma)^{-1}\xi \xi^*.
\end{equation*}
Now the right hand side is an isomorphism, and therefore the map $\xi_{1}h^{-1} \xi^*: W' \to W$ is an isomorphism.  Put $g=\xi_{1}h^{-1} \xi^+$; we obtain
\begin{equation*}
\eta_1=g \eta(\gm) g^*.
\end{equation*}

Since $g\xi$ and $\xi_{1}$ are both surjective maps from $X$ to $W$, there exists $g_{1}\in \GL(X)$ such that $g\xi=\xi_{1}g_{1}$.
Since $n(g\xi,\eta_{1})\in N$ (by Lemma \ref{basicnorm}), we have
\begin{equation*}
\begin{split}
\xi_{1}\xi_{1}^{*} &=-(\eta_{1}+\eta_{1}^{*}) \\
			&= (g\xi)(g\xi)^{*} \\
			&= (\xi_{1}g_{1})(\xi_{1}g_{1})^{*}. \\
\end{split}
\end{equation*}
			 By Lemma \ref{gentoclass}, there is an $h_1\in H$ such that $\xi_{1}g_{1}=\xi_{1}h_1$. Thus, $\xi_{1}=g\xi h_1^{-1}$.
It follows that $\Sha_S(m(g,h_1) \times \gm)=n(\xi_1,\eta_1)$, and therefore $N^S$ is contained in the image of $\Sha_S$.
\end{proof}

Let us pause to appreciate some progress we have made in our decomposition of $N$.

\begin{defn} Write $\bN_r$ for the preimage of the $\Norm$ map in $\bN$ of $\bH^{\bS}$, and $N_r$ for the $F$-points of $\bN_r$.
\end{defn} 

\begin{thm} \label{Theorem 1}
\begin{enumerate}
\item $\bN_r$ is nonempty and open in $\bN$.
\item $N_{r}$ is the union of $N^S$, as $S$ runs over the maximal $k$-tori $S$ of $H$.
\item We have a decomposition
\begin{equation} \label{N_decomp}
N_{r}= \bigcup_S \{ \Ad(m) n_S(\gm) \mid m \in M, \gm \in S_r \}.
\end{equation}
\end{enumerate}
\end{thm}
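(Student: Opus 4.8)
\textbf{Proof proposal for Theorem \ref{Theorem 1}.}

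The plan is to deduce all three parts from the material already assembled, principally Proposition \ref{Zariski} (that $\bH^{\bS}$ is nonempty and open in $\bH_k$), Lemma \ref{basicnorm} (that $\Norm$ lands in $H_k$, indeed hits only elements with $\rank(h-1;X)=k$), and Proposition \ref{surjprop} (that the image of $\Sha_S$ is exactly $N^S$). For part (i), I would first observe that $\Norm$ is a morphism of varieties on $\bN'$, which is Zariski open in $\bN$; hence $\bN_r=\Norm^{-1}(\bH^{\bS})$ is open in $\bN'$ and therefore open in $\bN$. For nonemptiness, the section gives the point directly: for any maximal $k$-torus $\bS$ and any $\gm\in\bS_r$, Proposition \ref{section} yields $n_S(\gm)\in\bN'$ with $\Norm(n_S(\gm))=\gm\in\bS_r\subseteq\bH^{\bS}$, so $n_S(\gm)\in\bN_r$. (One should check $\bS_r$ is nonempty over the relevant field, but this was noted after its definition, as $\bS_r$ is a nonempty open subset of $\bS$.)

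For part (ii), the key point is that every element of $H_k$ in the image of $\Norm$ actually has $\rank(h-1;X)=k$ exactly (Lemma \ref{basicnorm}(i), with $k=\dim W$). So if $n\in N_r$, then $h:=\Norm(n)$ lies in $H^{\bS}$ for \emph{some} maximal $k$-torus $\bS$ — by definition $N_r$ is the preimage of the union $\bH^{\bS}$ over all such tori, but we must be careful: the $F$-points. Here I would argue that $h\in H_k$ with $\rank(h-1;X)=k$ is semisimple (since $h$ lies in the image of $\Norm$, hence conjugate in $\bH$ to an element of some $\bS_r$, by Proposition \ref{Zariski} applied fibrewise together with the density statement), and that being conjugate over $\ol F$ to an element of $\bS_r$ means precisely that $n\in N^{S'}$ for the torus $\bS'$ in whose $\bH$-conjugacy class $h$ lies — but one needs $S'$ defined over $F$ and the conjugating element in $H$, not merely $\bH$. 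This is where Corollary \ref{cogentoclass} and Lemma \ref{gentoclass} enter: they let one replace an $\ol F$-conjugation by an $F$-rational one, exactly as in the proof of Proposition \ref{surjprop}. Conversely, $N^S\subseteq N_r$ is immediate since $\Norm(N^S)$ consists of elements conjugate to $\bS_r\subseteq\bH^{\bS}$. Thus $N_r=\bigcup_S N^S$, the union over $F$-rational maximal $k$-tori; and by Lemma \ref{bounce}(ii) one may take the union over a set of representatives of $H$-conjugacy classes, each class contributing the same $N^S$ up to the labelling, though I would keep the union over all such $S$ to match the statement.

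Part (iii) is then just the combination of (ii) with Proposition \ref{surjprop}(ii): $N^S=\im\Sha_S=\{\Ad(m)n_S(\gm)\mid m\in M,\ \gm\in S_r\}$, so substituting into (ii) gives (\ref{N_decomp}). The main obstacle I anticipate is the rationality bookkeeping in part (ii): passing from ``$\Norm(n)$ is $\ol F$-conjugate to an element of $\bS_r$ for some maximal $k$-torus'' to ``$n\in N^{S}$ for an honest $F$-rational $S$'' requires knowing that the $\bH$-conjugacy class of a rational semisimple element of $\bH_k$ with $\rank(h-1;X)=k$ contains a rational point of some $\bS_r$ with $\bS$ defined over $F$, and that the $\Norm$-fibre structure is compatible with this — essentially re-running the descent argument of Proposition \ref{surjprop}. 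The openness and nonemptiness in (i) and the final assembly in (iii) are routine by comparison.
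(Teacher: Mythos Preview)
Your treatment of parts (i) and (iii) is correct and matches the paper's: openness of $\bN_r$ follows from $\Norm:\bN'\to\bH_k$ being a morphism and $\bH^{\bS}$ being open in $\bH_k$ (Proposition \ref{Zariski}), nonemptiness from the section $n_{\bS}$, and (iii) is immediate from (ii) together with Proposition \ref{surjprop}(ii).

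For part (ii) you have correctly located the only nontrivial point---that $n\in N_r$ a priori gives only that $\Norm(n)$ is $\bH$-conjugate into $\bS_r$, whereas $n\in N^S$ requires $H$-conjugacy into $S_r$ for an $F$-rational $S$---but your proposed remedy is misdirected. Lemma \ref{gentoclass} and Corollary \ref{cogentoclass} are Witt-extension statements about the map $\xi$; they produce elements of $H$ from elements of $\GL(X)$, not $F$-rational tori from $\ol F$-conjugacy, and the proof of Proposition \ref{surjprop} contains no descent step to imitate.

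The argument the paper regards as ``clear'' is more direct and avoids conjugation altogether. Given $n\in N_r$, put $h=\Norm(n)\in H$. Being $\bH$-conjugate to an element of $\bS_r$, the element $h$ is semisimple with $E_h:=\ker(h-1)$ an $F$-rational nondegenerate subspace of dimension $\dim X-k$ (Lemma \ref{iamalemma}), and $h$ is regular in $H^{E_h}$. Hence $S':=Z_{H^{E_h}}(h)^\circ$ is a maximal torus of $H^{E_h}$ defined over $F$, so an $F$-rational maximal $k$-torus, with $X^{S'}=E_h$; one checks at once that $h\in S'_r$. Thus $n\in N^{S'}$ with no conjugating element needed. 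The reverse inclusion $\bigcup_S N^S\subseteq N_r$ is, as you say, immediate.
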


\begin{proof} 
Recall that the image of $\Norm$ lies in $\bH_k$.
Since $\bH^{\bS}$ is open in $\bH_k$, we see that $\bN_r$ is open in $\bN$.  One can apply $n_{\bS}$ to elements of $ \bS$ to produce elements of $\bN_r$, so it is nonempty.
The second statement is clear, and the last statement follows from Proposition \ref{surjprop}.
\end{proof}

\subsection{Fibre of $\Sha_S$}

There is some redundancy in the decomposition (\ref{N_decomp}), coming from  the normalizer $N_H(S)$ of $S$ in $H$.  

\begin{defn} Define $\Delta: M_E \to M$ via $\Delta( h)=m(\Xi( h), h)$.  Write $\Delta_S$ for the image $\Delta(Z_H(S))$ of $Z_H(S)$ under $\Delta$.
\end{defn}

Consider the action of $N_H(S)$ on $M \times S_{r}$ given by:
\begin{equation*}
w : (m \times \gm) \mapsto m \Delta(w) \times w^{-1} \gm w,
\end{equation*}
for $w \in N_H(S)$.  This descends to an action of $W_H(S)$ on $M/\Delta_S \times S_{r}$.

  \begin{prop} Let $m \in M$ and $\gm \in S_{r}$.  The fibres of $\Sha_S$ are the same as the $N_H(S)$-orbits on $M \times S_{r}$.  In other words, the fibre of $\Sha_S$ containing $m \times \gm$ is precisely
\begin{equation*}
\{ m \Delta( w) \times  w^{-1} \gm  w \mid  w \in N_H(S) \}.
\end{equation*}
\end{prop}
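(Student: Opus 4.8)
The plan is to show containment in both directions. For the easy direction, I would take $w \in N_H(S)$ and verify directly that $\Sha_S(m\Delta(w) \times w^{-1}\gm w) = \Sha_S(m \times \gm)$. Using \eqref{fake_Sha}, this amounts to checking that $\Ad(m\Delta(w)) n_S(w^{-1}\gm w) = \Ad(m) n_S(\gm)$, i.e.\ that $\Ad(\Delta(w)) n_S(w^{-1}\gm w) = n_S(\gm)$. Since $\Delta(w) = m(\Xi(w), w)$, the adjoint action formula $\Ad(m(g,h)) n(\xi,\eta) = n(g\xi h^{-1}, g\eta g^*)$ reduces this to two identities: $\Xi(w)\,\xi\,(w^{-1}\gm w)^{-1} = \xi\,\gm^{-1}$ on the $\xi$-part, and $\Xi(w)\,\eta_S(w^{-1}\gm w)\,\Xi(w)^* = \eta_S(\gm)$ on the $\eta$-part. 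The first follows because $\xi \circ w^{-1} = \xi$ (as $w \in Z_H(S) \subseteq H^E$ fixes $E = \ker\xi$, hence... more carefully, one uses $\Xi(w)\xi = \xi w$ from $\xi^+\xi = P_{E^\circ}$ and $w$ commuting with $P_E$, an instance of Proposition~\ref{propstab}(1)). The second follows from the definition $\eta_S(h) = (\Xi(h-1))^{-1}\ups^{-1}$ together with $\Xi(w)\Xi(h-1)^{-1}\Xi(w)^{-1} = \Xi(w(h-1)w^{-1})^{-1}$ and $\tau(\Xi(w)) = \Xi(w^*) = \Xi(w^{-1})$, plus compatibility of $\ups$ with $\tau$; this is where Proposition~\ref{About_h_G} and the $\tau$-equivariance of $\Xi$ get used.

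For the reverse containment, suppose $\Sha_S(m_1 \times \gm_1) = \Sha_S(m \times \gm)$ with $m = m(g,h)$, $m_1 = m(g_1,h_1)$. Applying $\Norm$ and Proposition~\ref{surjprop}(1) gives $h\gm h^{-1} = h_1 \gm_1 h_1^{-1}$, so $w := h_1^{-1} h$ satisfies $w \gm w^{-1} = \gm_1$, hence $w \in N_H(S)$ (since $\gm, \gm_1 \in S_r$ are regular in $H^E$, an element conjugating one maximal torus element of $S_r$ to another must normalize $S$ — here I would invoke Lemma~\ref{basic xi prop}(2) and the regularity built into $S_r$). Replacing $m_1 \times \gm_1$ by $m_1\Delta(w) \times w^{-1}\gm_1 w$ (which by the first direction is in the same fibre) reduces us to the case $\gm_1 = \gm$, $h_1 = h$. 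Then the equality of the $\xi$-parts, $g\xi h^{-1} = g_1 \xi h^{-1}$, forces $g\xi = g_1\xi$, and since $\xi$ is surjective, $g = g_1$. Thus $m_1 = m$, and tracing back, $m_1 \times \gm_1$ was $m\Delta(w) \times w^{-1}\gm w$ for the $w$ we found.

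The main obstacle I expect is the reverse direction's bookkeeping: specifically, justifying carefully that $w = h_1^{-1}h$ lands in $N_H(S)$ rather than merely conjugating one regular element into another without normalizing $S$, and then making sure the ``reduction to $\gm_1 = \gm$, $h_1 = h$'' step is legitimate — i.e.\ that after modifying $m_1 \times \gm_1$ within its $N_H(S)$-orbit we genuinely land on the same $S$-parameter and $H$-component. One must also handle the subtlety that $\gm$ regular in $H^E$ does not immediately control the $\bH^{\bE^\circ}$-direction, so the centralizer/normalizer splitting of Lemma~\ref{basic xi prop}(2) is doing real work. Once these points are pinned down, the $\xi$-part surjectivity argument closes everything cleanly.
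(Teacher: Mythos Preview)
Your proposal is correct and uses essentially the same ingredients as the paper's proof: comparing $\xi$-components, applying $\Norm$ via Proposition~\ref{surjprop}, and invoking regularity to land $w$ in $N_H(S)$. The paper organizes the reverse direction slightly more directly: it reads off $(g')^{-1}g\,\xi = \xi\,(h')^{-1}h$ from the $\xi$-components first, which immediately yields $(g')^{-1}g = \Xi(w)$ for $w=(h')^{-1}h$ (since $\xi\xi^+=1$), and only then applies $\Norm$---this bypasses your reduction-via-the-easy-direction step. One small correction: the parenthetical ``$w \in Z_H(S) \subseteq H^E$'' is off, since neither $Z_H(S)$ nor $N_H(S)$ sits inside $H^E$; rather $N_H(S) \subseteq M_E$ by Lemma~\ref{basic xi prop}, and your ``more carefully'' clause already supplies the right fix, as $w \in M_E$ commuting with $P_E$ is precisely what is needed.
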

  
\begin{proof}  
The reader may check that these elements are indeed in the fibre.  Suppose now that there are $g,g' \in G$, $h,h' \in H$, and $\gm, \gm' \in S_{r}$ so that
\begin{equation} \label{labeled}
\Sha_S(m(g,h) \times \gm)=\Sha_S(m(g',h') \times \gm').
\end{equation}
 Thus $n(g\xi h^{-1},g\eta(\gamma)g^{*})=n(g'\xi (h')^{-1},g'\eta(\gamma')(g')^{*})$. Comparing the first coordinates we find $(g')^{-1}g \xi =\xi (h')^{-1}h$. Let $x=(g')^{-1}g$ and $w=(h')^{-1}h$. Then we find $x \xi=\xi w$, and therefore 
 $x=\Xi(w)$.

Taking $\Norm$ of both sides of (\ref{labeled}) gives
\begin{equation*}
h \gm h^{-1}=h' \gm' (h')^{-1}
\end{equation*}
by Proposition \ref{surjprop}. Since $\gm, \gm' \in S_{\reg}$ it follows that $w \in N_H(S)$.  The result follows.
 \end{proof} 
 
\begin{cor} The map $\Sha_{S}$ descends to a surjective map  
\begin{equation} \label{real_Sha}
\Sha_{S}: M/\Delta_S \times S_{r}\to N^{S}.
\end{equation}
The fibres of $\Sha_S$ are the same as the $W_H(S)$-orbits on $M/\Delta_S \times S_{r}$.  In other words, the fibre of $\Sha_S$ containing $m \times \gm$ is precisely
\begin{equation*}
\{ m \Delta( w) \times  w^{-1} \gm  w \mid  w \in W_H(S) \}.
\end{equation*}
\end{cor}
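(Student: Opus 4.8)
The plan is to derive the corollary from the preceding proposition, which identifies the fibres of $\Sha_S$ on $M \times S_r$ with the $N_H(S)$-orbits, by pushing everything down along the quotient map $q : M \times S_r \to M/\Delta_S \times S_r$.

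First I would check that $\Sha_S$ descends to $M/\Delta_S \times S_r$, i.e.\ that it is constant on right $\Delta_S$-cosets of $M$. This is immediate from the proposition: for $z \in Z_H(S)$ the point $m\Delta(z) \times z^{-1}\gm z$ lies in the fibre of $\Sha_S$ through $m \times \gm$, and since $z$ centralizes $S$ we have $z^{-1}\gm z = \gm$; hence $\Sha_S(m\Delta(z) \times \gm) = \Sha_S(m \times \gm)$, and letting $z$ range over $Z_H(S)$ shows $\Sha_S$ is constant on $m\Delta_S \times \{\gm\}$. Surjectivity of the descended map onto $N^S$ is then just Proposition \ref{surjprop}(ii). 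I would also recall, as asserted in the text just before the corollary, that the $N_H(S)$-action on $M \times S_r$ descends to a $W_H(S) = N_H(S)/Z_H(S)$ action on $M/\Delta_S \times S_r$: the mechanism is that $\Delta : M_E \to M$ is a group homomorphism --- since $\Xi|_{M_E}$ is one by Proposition \ref{propstab}(ii) and $m(\cdot,\cdot) : G \times H \to M$ is an isomorphism --- so that for $w \in N_H(S)$ and $z \in Z_H(S)$ one has $\Delta(wz) = \Delta(w)\Delta(z)$ with $\Delta(z) \in \Delta_S$, while $N_H(S)$ normalizes $Z_H(S)$ and $Z_H(S)$ acts trivially on $S$ by conjugation; thus replacing $w$ by $wz$ changes $m\Delta(w)$ only within its $\Delta_S$-coset and leaves $w^{-1}\gm w$ fixed.

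Then I would finish with a formal computation of the fibres. For $n \in N^S$, its fibre under the descended map is exactly the image under $q$ of the fibre $\Sha_S^{-1}(n) \subseteq M \times S_r$: the inclusion $\supseteq$ is clear, and conversely any point of the fibre of the descended map lifts along $q$ to a point of $M \times S_r$ on which $\Sha_S$ still takes the value $n$. By the proposition, $\Sha_S^{-1}(n)$ is a single $N_H(S)$-orbit, namely $\{ m\Delta(w) \times w^{-1}\gm w \mid w \in N_H(S) \}$ for any $m \times \gm$ in it; applying $q$, and using that $\overline{m\Delta(w)} \in M/\Delta_S$ and $w^{-1}\gm w$ depend only on the image of $w$ in $W_H(S)$, the image of this orbit is precisely the $W_H(S)$-orbit of $m \times \gm$ in $M/\Delta_S \times S_r$. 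This is the asserted fibre description. The only step calling for real care is the well-definedness of the $W_H(S)$-action on the quotient, which is exactly where the homomorphism property of $\Delta$ (hence Proposition \ref{propstab}) and the normality of $Z_H(S)$ in $N_H(S)$ enter; the rest is bookkeeping about images of orbits under a quotient map.
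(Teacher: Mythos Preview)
Your proposal is correct and follows exactly the approach the paper intends: the corollary is stated without proof, as an immediate consequence of the preceding proposition identifying the fibres of $\Sha_S$ on $M \times S_r$ with $N_H(S)$-orbits, together with the remark just before it that the action descends to $W_H(S)$ on the quotient. The details you supply --- well-definedness of the descended map via $\Delta(Z_H(S)) = \Delta_S$, well-definedness of the $W_H(S)$-action via the homomorphism property of $\Delta$ and normality of $Z_H(S)$ in $N_H(S)$, and the identification of fibres as images of $N_H(S)$-orbits under $q$ --- are precisely the bookkeeping the paper leaves to the reader.
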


The map defined by (\ref{real_Sha}) is the one we will study henceforth.

\section{Lie Algebras} \label{Lie}

As usual, we use the Fraktur analogues of the Latin font to denote the Lie algebras of a given group. Thus $\GG$, $\hh$, $\mm$, $\nn$, $\LS$, etc. will denote the Lie algebras of $G$, $H$, $M$, $N$, $S$, etc. respectively.  In particular, 
$z_{\hh}(\LS)$ will denote the Lie algebra of $Z_{H}(S)$.

An element $u\in \mathfrak n$ is characterized by linear maps
$$A:X\to W, \ A':W'\to X, \ B:W'\to W$$
such that:
\begin{enumerate} 
\item $u(w)=0 \ \ \forall w\in W$.
\item If $x\in X$ then $u(x)=A(x)$. 
\item If $w'\in W'$ then $u(w')=A'(w')+B(w')$.
\end{enumerate}
The condition that $u\in \mathfrak n$ is equivalent to the two conditions 
\begin{enumerate}
\item $A^{*}+A'=0$,
\item $B^{*}+B=0$. 
\end{enumerate}
So as usual we write $u\in \mathfrak n$ as $u(A,B)$ where the condition is simply that $B$ is skew-Hermitian on $W'$. 
 
By Lemma \ref{basic xi prop}, we have 
\begin{equation} \label{blabel}
z_{\hh}(\LS)=\hh^{E^\circ} + \LS,
\end{equation}
and the sum is direct.

\subsection{Four Exact Sequences} \label{4X}

The relationships between these Lie algebras are subtle, governed by no fewer than four exact sequences.  The starting point is the map
 $\phi:\mm \to \mathfrak n$ given by
 \begin{equation*}
 \phi(A,B)=u(A\xi,\xi B\xi^{*}).
 \end{equation*}
This map is typically neither injective nor surjective.  We identify its kernel and image.
 
\begin{defn}
Let 
\begin{equation*}
\kappa= \{ B \in \hh \mid B(E^\circ) \subseteq E \}.
\end{equation*}
Define $i_{2}:\kappa\to \mathfrak m$ by $i_{2}(B)=(0,B)$.
Also we set
$$\mathfrak n_{\xi}=\{u(A,B) \in \nn \mid  E\subseteq \ker A\}.$$
\end{defn}

\begin{prop}  (The Exact Sequence for $\mm$) The following sequence is exact:
\begin{equation} \label{M_seq}
0 \to \kappa\overset{i_{2}}\to \mathfrak m \overset{\phi}\to \mathfrak n_{\xi} \to 0.
\end{equation}
\end{prop}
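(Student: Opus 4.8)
The plan is to verify exactness at each of the three spots $\kappa$, $\mm$, and $\nn_\xi$ by direct computation with the explicit description of $\phi$ and the various adjoint relations established in Sections \ref{Preliminaries}--\ref{Section_Section}. First I would check that $\phi \circ i_2 = 0$: since $i_2(B) = (0,B)$, we have $\phi(i_2(B)) = u(0, \xi B \xi^*)$, so I must show $\xi B \xi^* = 0$ whenever $B \in \kappa$, i.e.\ whenever $B(E^\circ) \subseteq E$. This is immediate because $\xi^*$ has image $E^\circ$ (Lemma \ref{invert}(1)), so $B\xi^*$ lands in $E = \ker \xi$; hence $\xi B \xi^* = 0$. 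For injectivity of $i_2$ there is nothing to prove. For exactness at $\mm$, suppose $\phi(A,B) = 0$, i.e.\ $A\xi = 0$ and $\xi B \xi^* = 0$. Since $\xi$ is surjective, $A\xi = 0$ forces $A = 0$ on $W$—wait, rather $A\xi = 0$ with $\xi\colon X \to W$ surjective... actually $A\colon W \to X$, so $\xi$ here is composed as $A\xi$ meaning first $\xi$ then $A$? Let me instead read $\phi(A,B) = u(A\xi, \xi B \xi^*)$ with the understanding that the first slot of $u$ is a map $X \to W$; so "$A\xi$" with $A$ the $W$-component acting by $\Ad$, and since $\Ad(m(g,h))$ acts on the $\xi$-datum by $g\xi h^{-1}$, the infinitesimal version sends $(A,B) \mapsto A\xi - \xi B$; I would recheck the precise form of $\phi$ against the $\Ad$-formula $n(g\xi h^{-1}, g\eta g^*)$ and differentiate. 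In any case, $A\xi = 0$ (or $A\xi - \xi B = 0$) together with surjectivity of $\xi$ pins down the $G$-component, and then $\xi B \xi^* = 0$ combined with $\xi$ surjective and $\xi^*$ having image $E^\circ$ forces $B(E^\circ) \subseteq \ker \xi = E$, i.e.\ $B \in \kappa$; so $\ker \phi = i_2(\kappa)$.

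The substantive part is surjectivity of $\phi$ onto $\nn_\xi$. I would first confirm the image lies in $\nn_\xi$: the first datum $A\xi$ (or $A\xi - \xi B$) kills $E = \ker\xi$ precomposed appropriately—more precisely, the first component of $\phi(A,B)$ as a map $X \to W$ factors through $\xi$, hence vanishes on $E$, so $\phi(A,B) \in \nn_\xi$. Conversely, given $u(A_0, B_0) \in \nn_\xi$ with $E \subseteq \ker A_0$, I want $(A,B) \in \mm$ with $\phi(A,B) = u(A_0,B_0)$. Since $A_0\colon X \to W$ vanishes on $E$, it factors as $A_0 = A\xi$ for a unique $A \in \End(W)$ using the right inverse $\xi^+$: set $A = A_0 \xi^+$, and then $A\xi = A_0 \xi^+\xi = A_0 P_{E^\circ} = A_0$ since $A_0$ kills $E$. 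For the second datum I must produce $B \in \hh$ (skew with respect to $\Phi$ on $X$) with $\xi B \xi^* = B_0$; here I can use $B = \xi^+ B_0 \ups \xi^* + (\text{correction})$ or more cleanly $B = \Xi^+$-type lift, and then verify the resulting $(A,B)$ genuinely lies in $\mm = \GG \oplus \hh$ — i.e.\ that $B$ can be chosen skew-Hermitian on $X$. The skew-Hermitian condition $B_0^* + B_0 = 0$ on $W'$ (the defining condition for $u(A_0,B_0) \in \nn$) should translate, via $\tau(\Xi(\cdot)) = \Xi(\cdot^*)$ and the adjoint properties of $\xi, \xi^*, \ups$, into the skew condition on the lift $B$.

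The main obstacle I anticipate is precisely this last point: solving $\xi B \xi^* = B_0$ within the subspace of $\hh$ (skew-Hermitian operators on $X$), rather than merely within $\End(X)$. The map $B \mapsto \xi B \xi^*$ from $\End(X) \to \End(W)$ (or $\Hom(W',W)$) is visibly surjective since $\xi$ is surjective and $\xi^*$ is injective with nondegenerate image $E^\circ$, but its restriction to $\hh$ requires care: one must exploit that $B_0$ is skew and that conjugation by $\xi, \xi^*$ is compatible with the adjoints — essentially that $\Xi$ intertwines $*$ on $X$ with $\tau$ on $W$ (the cited Proposition), and that $\tau$-skew elements pull back to $\Phi$-skew elements of $\hh$. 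I would handle this by writing down an explicit lift, for instance using $\xi^+$ and $\ups$, and checking the skew condition head-on; the counting of dimensions ($\dim \kappa + \dim \nn_\xi = \dim \mm$) is a useful consistency check but the kernel of the torus-fixed structure on $\kappa$ may make the dimension count less transparent than one would like, so I would not rely on it and instead verify surjectivity directly.
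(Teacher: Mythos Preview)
The paper states this proposition without proof, so your outline would in fact supply what the paper omits, and it is along the right lines.  Two remarks.

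First, your hesitation over whether the first component of $\phi(A,B)$ is $A\xi$ or $A\xi-\xi B$ is misplaced: $\phi$ is \emph{defined} in the paper as $\phi(A,B)=u(A\xi,\xi B\xi^{*})$, not obtained by differentiating the $\Ad$-action.  With that in hand, exactness at $\kappa$ and at $\mm$ goes exactly as you say: $A\xi=0$ with $\xi$ surjective forces $A=0$, and $\xi B\xi^{*}=0$ together with $\operatorname{im}\xi^{*}=E^{\circ}$ and $\ker\xi=E$ gives $B(E^{\circ})\subseteq E$, i.e.\ $B\in\kappa$.

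Second, for surjectivity your lift of the first slot, $A=A_{0}\xi^{+}$, is correct.  For the second slot a clean explicit choice is
\[
B=\xi^{+}B_{0}\,\ups\,\xi=\xi^{*}\ups B_{0}\ups\,\xi\in\End(X),
\]
which satisfies $\xi B\xi^{*}=\xi\xi^{+}B_{0}\ups\,\xi\xi^{*}=B_{0}\ups\,\ups^{-1}=B_{0}$.  This $B$ vanishes on $E$ and has image in $E^{\circ}$; checking $B^{*}=-B$ reduces (after unwinding the adjoint identities for $\xi,\xi^{*}$) to the given condition $B_{0}^{*}=-B_{0}$.  Equivalently, one observes that $B\mapsto\xi B\xi^{*}$ restricted to $\hh^{E}$ is injective, and then the dimension count coming from the other three exact sequences (or the decomposition $\hh=\hh^{E}\oplus\hh^{E^{\circ}}\oplus\operatorname{im}\Gamma_{2}$) forces it to be onto the skew-Hermitian maps $W'\to W$.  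Either route closes the argument; your instinct that this is the only point requiring real work is correct.
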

  
We next identify the quotient of $\nn$ by $\nn_{\xi}$.

\begin{defn}
 Let $i$ denote the inclusion of $\mathfrak n_{\xi}$ into $\mathfrak n$. 
Write $\psi: \mathfrak n \to \Hom(W',E)$ for the map given by
\begin{equation*}
\psi: u(A,B)\mapsto P_{E}\circ A^{*}.
\end{equation*}
\end{defn}

\begin{prop} \label{N_exact} (The Exact Sequence for $\nn$) The following is a split exact sequence:
\begin{equation} \label{N_seq}
0\to \mathfrak n_{\xi}\overset{i}\to \mathfrak n \overset{\psi}\to \Hom(W',E) \to 0.
\end{equation}
\end{prop}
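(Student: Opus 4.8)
The goal is to verify three things: that $\psi \circ i = 0$ with $\mathfrak{n}_\xi$ being exactly the kernel of $\psi$, that $\psi$ is surjective, and that the sequence splits. First I would unwind the definition of $\psi$. Recall $u = u(A,B)$, with $A: X \to W$, and that the map $\psi$ sends $u(A,B)$ to $P_E \circ A^*$, where $A^*: W' \to X$ is the adjoint of $A$ relative to $\Phi$ and $\Psi_W$ (or, more precisely, defined by $\Phi(A^* w', x) = \Psi_W(w', A x)$ — one should fix whichever pairing convention the paper uses, matching the $\xi^*$ convention from Section \ref{Preliminaries}). The key linear-algebra observation is that $E = \ker \xi$, and one has $\operatorname{im}(\xi^*) = E^\circ$ by Lemma \ref{invert}(1); dualizing, $E \subseteq \ker A$ if and only if $\operatorname{im}(A^*) \subseteq E^\circ$, which is exactly the condition $P_E \circ A^* = 0$. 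This identifies $\ker \psi = \mathfrak{n}_\xi$ and simultaneously shows $\psi \circ i = 0$, giving exactness at $\mathfrak{n}$ and at $\mathfrak{n}_\xi$ (injectivity of $i$ is trivial since it is an inclusion).

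**Surjectivity and the splitting.** For surjectivity of $\psi$, given any $T \in \Hom(W', E)$ I would produce an explicit preimage. The natural candidate is to take $A$ so that $A^* = T$ (composed with the inclusion $E \hookrightarrow X$), i.e. $A$ is the adjoint of $T$; then one must check that the resulting pair $u(A, B)$ can be completed to an element of $\mathfrak{n}$, i.e. that the constraints $A^* + A' = 0$ and $B^* + B = 0$ from the definition of $\mathfrak{n}$ can be satisfied. Since $A'$ is simply determined by $A$ via $A' = -A^*$, and $B$ is free subject only to skew-Hermitian symmetry, one may take $B = 0$; so $u(A, 0) \in \mathfrak{n}$ maps to $P_E \circ A^* = P_E \circ T = T$ (using $\operatorname{im} T \subseteq E$). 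This also exhibits a canonical section $s: \Hom(W', E) \to \mathfrak{n}$, $s(T) = u(A_T, 0)$ where $A_T$ is characterized by $A_T^* = \iota_E \circ T$; one checks $s$ is linear and $\psi \circ s = \mathrm{id}$, which gives the splitting. I would also remark that $\operatorname{im}(s)$ is a complement to $\mathfrak{n}_\xi$ inside $\mathfrak{n}$.

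**Anticipated obstacle.** The routine part is genuinely routine, so the only real subtlety is bookkeeping with the adjoints: there are two bilinear forms in play ($\Phi$ on $X$ and on $W'$, and $\Psi_W$ on $W$), and the maps $A, A^*, \xi, \xi^*, \xi^+$ interact through several of them. The delicate point is to confirm that "$E \subseteq \ker A \iff \operatorname{im} A^* \subseteq E^\circ$" is correct with the specific adjoint convention used, and that $P_E \circ A^*$ genuinely vanishes precisely on $\mathfrak{n}_\xi$ rather than on some larger or smaller subspace; this hinges on $E^\circ = \operatorname{im}(\xi^*)$ and the nondegeneracy of $\Phi$ restricted to the relevant subspaces (Proposition \ref{Enondeg}). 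Once the adjoint conventions are pinned down, each step is a one- or two-line verification, and I would present it as such rather than belabor the computations.
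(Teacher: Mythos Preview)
Your proposal is correct and matches the paper's approach: the paper states the proposition without a formal proof and simply records the section $\tilde{\psi}(A) = u(A^* P_E, 0)$, which is exactly your $s(T)$ (since requiring $A_T^* = \iota_E \circ T$ forces $A_T = T^* P_E$). Your verification that $\ker\psi = \mathfrak{n}_\xi$ via the duality $E \subseteq \ker A \iff \operatorname{im}(A^*) \subseteq E^\circ$ is the intended (routine) argument; the only adjustment is that the adjoint $A^*$ here is taken with respect to $\Phi$ alone (pairing $W$ with $W'$ and $X$ with itself), not $\Psi_W$.
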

Define $\tilde \psi: \Hom(W',E) \to \nn$ via $\tilde \psi(A) =u(A^*P_E,0)$; then $\tilde \psi$ is a section of $\psi$.
  \bigskip
  
The space $\kappa$ also maps onto $\Hom(W',E)$.  
  
 \begin{defn}  Define $\Gamma_{1}:\kappa\to \Hom(W',E)$ via $\Gamma_{1}(B)=B\xi^{*}$. Further define $\Gamma_{2}:\Hom(W',E)\to \hh$ via $\Gamma_{2}(A)=A(\xi\xi^{*})^{-1}\xi- \xi^{*}(\xi\xi^{*})^{-1}A^{*}$. 
\end{defn}

One checks that the image of $\Gamma_2$ lands in $\kappa$, and that $\Gamma_1 \circ \Gamma_2$ is the identity on $\Hom(W',E)$.

\begin{prop} \label{kappaX} (The Exact Sequence for $\kappa$) The following sequence is exact:
\begin{equation} \label{kappa_seq}
0 \to \hh^{E^\circ} \overset{i}\to \kappa \overset{\Gamma_{1}}\to \Hom(W',E) \to 0.
\end{equation}
Here $i$ is the natural inclusion.
\end{prop}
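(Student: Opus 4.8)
The plan is to verify exactness at each of the three spots in the sequence
\begin{equation*}
0 \to \hh^{E^\circ} \overset{i}\to \kappa \overset{\Gamma_{1}}\to \Hom(W',E) \to 0,
\end{equation*}
using the explicit formulas $\Gamma_1(B) = B\xi^*$ and $\Gamma_2(A) = A(\xi\xi^*)^{-1}\xi - \xi^*(\xi\xi^*)^{-1}A^*$, together with the fact (Lemma \ref{invert}) that $\xi^*$ is injective with image $E^\circ$ and $\xi\xi^*$ is an isomorphism. First I would check that $i$ is injective: this is immediate since it is the natural inclusion of $\hh^{E^\circ}$ into $\kappa$, and one needs only observe that $\hh^{E^\circ} \subseteq \kappa$, i.e. that $B|_{E^\circ}=0$ forces $B(E^\circ) \subseteq E$, which is trivial.

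Next, exactness at $\kappa$. For $B \in \hh^{E^\circ}$ we have $\Gamma_1(B) = B\xi^* = 0$ because the image of $\xi^*$ is $E^\circ$ and $B$ kills $E^\circ$; hence $\im(i) \subseteq \ker \Gamma_1$. Conversely, if $B \in \kappa$ with $B\xi^* = 0$, then $B$ annihilates the image $E^\circ$ of $\xi^*$, so $B \in \hh^{E^\circ}$; this gives $\ker \Gamma_1 \subseteq \im(i)$. Finally, surjectivity of $\Gamma_1$: by the remark preceding the proposition, $\Gamma_2$ maps $\Hom(W',E)$ into $\kappa$ and $\Gamma_1 \circ \Gamma_2 = \mathrm{id}$ on $\Hom(W',E)$, so $\Gamma_1$ is a split surjection. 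If I wished to be self-contained here I would recompute $\Gamma_1(\Gamma_2(A)) = (A(\xi\xi^*)^{-1}\xi - \xi^*(\xi\xi^*)^{-1}A^*)\xi^* = A(\xi\xi^*)^{-1}(\xi\xi^*) - \xi^*(\xi\xi^*)^{-1}A^*\xi^*$; the first term is $A$, and the second vanishes since $A^*\xi^*\colon W' \to X \to W'$ has image inside $E$'s preimage data — more precisely $A \in \Hom(W',E)$ so $A^* $ kills $E^\circ \supseteq \im \xi^*$ after pairing, giving $A^*\xi^* = 0$; I would double-check the adjoint bookkeeping here but expect it to fall out of $\Phi(A^*\xi^* w_1', w_2') = \Phi(\xi^* w_1', A w_2') = 0$ since $\xi^* w_1' \in E^\circ$ and $Aw_2' \in E$.

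The one point requiring genuine care, and the step I expect to be the main obstacle, is confirming that $\Gamma_2$ really does land in $\kappa$ rather than just in $\hh$ — that is, that $\Gamma_2(A)$ is both skew-Hermitian (so lies in $\hh$) and sends $E^\circ$ into $E$. Skew-Hermitian-ness follows from the manifest form $\Gamma_2(A) = T - T^*$ with $T = A(\xi\xi^*)^{-1}\xi$, once one checks $(\xi^*(\xi\xi^*)^{-1}A^*)^* = A(\xi\xi^*)^{-1}\xi$ using $(\xi\xi^*)^* = \xi\xi^{**} = \xi\xi^*$ (the form identification makes $\xi^{**}=\xi$) — a short adjoint computation. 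For the condition $\Gamma_2(A)(E^\circ) \subseteq E$: writing $x = \xi^*(w') \in E^\circ$, the first term $A(\xi\xi^*)^{-1}\xi\xi^*(w') = A(w') \in E$, and the second term $\xi^*(\xi\xi^*)^{-1}A^*\xi^*(w') = 0$ by the vanishing $A^*\xi^* = 0$ noted above; so $\Gamma_2(A)(E^\circ) \subseteq E$ as needed. Once these verifications are in place the exactness of the sequence is complete, and the decomposition \eqref{blabel} together with the three exact sequences \eqref{M_seq}, \eqref{N_seq}, \eqref{kappa_seq} assembles the tangent-space picture for $\Sha_S$.
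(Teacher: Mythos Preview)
Your proposal is correct and follows precisely the approach the paper indicates: the paper does not give a detailed proof of this proposition, but immediately before it remarks that ``one checks that the image of $\Gamma_2$ lands in $\kappa$, and that $\Gamma_1 \circ \Gamma_2$ is the identity on $\Hom(W',E)$,'' leaving the routine verification of exactness to the reader. You have carried out exactly that verification --- identifying $\ker\Gamma_1$ with $\hh^{E^\circ}$ via $\im\xi^* = E^\circ$, and using the section $\Gamma_2$ for surjectivity --- and your caution about the adjoint bookkeeping (in particular the vanishing $A^*\xi^*=0$ from $\im\xi^*\subseteq E^\circ\perp E\supseteq\im A$) is well placed but resolves as you indicate.
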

 
Of course, by (\ref{blabel}), $\hh^{E^\circ}$ is the quotient of $z_{\hh}(\LS)$ by $\LS$.  We make this explicit as follows:

\begin{defn}  Define the map $K:z_{\mathfrak h}(\mathfrak s) \to \hh^{E^\circ}$ such that $K(A)=AP_{E}$.
\end{defn}

\begin{prop} (The Exact Sequence for $z_{\mathfrak h}(\mathfrak s)$) The following sequence is exact:
\begin{equation} \label{Z_seq}
0\to \mathfrak s\overset{i}\to z_{\mathfrak h}(\mathfrak s) \overset{K}\to \hh^{E^\circ} \to 0.
\end{equation}
Here $i$ is the natural inclusion.
\end{prop}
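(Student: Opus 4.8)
The plan is to read the sequence off directly from the direct-sum decomposition (\ref{blabel}), namely $z_{\mathfrak h}(\mathfrak s)=\hh^{E^\circ}\oplus\mathfrak s$, by checking that $K$ is nothing but the projection onto the first summand. Injectivity of $i$ is trivial, and exactness at $z_{\mathfrak h}(\mathfrak s)$ and at $\hh^{E^\circ}$ will both drop out once $K$ is identified with this projection, so the whole argument reduces to a short bookkeeping step.

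First I would record how elements of $z_{\mathfrak h}(\mathfrak s)$ interact with the splitting $X=E\oplus E^\circ$. An element of $z_{\mathfrak h}(\mathfrak s)$ commutes with $\mathfrak s$ and hence preserves the weight spaces of the $S$-action on $X$; in particular it preserves both $E=X^S$ and its orthogonal complement $E^\circ$. On the other hand $\mathfrak s=\Lie(S)$ annihilates $E$ (since $S$ fixes $E$ pointwise), so $CP_E=0$ for every $C\in\mathfrak s$; and any $B\in\hh^{E^\circ}$ annihilates $E^\circ$ while, being in $z_{\mathfrak h}(\mathfrak s)$, preserving $E$, so $B=B(P_E+P_{E^\circ})=BP_E$. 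These are the only facts required.

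Now, given $A\in z_{\mathfrak h}(\mathfrak s)$, write $A=A_1+A_2$ with $A_1\in\hh^{E^\circ}$ and $A_2\in\mathfrak s$ according to (\ref{blabel}). Then
\begin{equation*}
K(A)=AP_E=A_1P_E+A_2P_E=A_1P_E=A_1,
\end{equation*}
using $A_2P_E=0$ and $A_1P_E=A_1$ from the previous paragraph. Hence $K$ takes values in $\hh^{E^\circ}$, so it is well defined; it is surjective, because $\hh^{E^\circ}\subseteq z_{\mathfrak h}(\mathfrak s)$ by (\ref{blabel}) and $K$ restricts to the identity on $\hh^{E^\circ}$; and its kernel is precisely the complementary summand $\mathfrak s=i(\mathfrak s)$, since $A_1=K(A)$ vanishes exactly when $A=A_2\in\mathfrak s$. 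This gives the exactness of (\ref{Z_seq}).

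There is essentially no obstacle here: the content is entirely in (\ref{blabel}) --- itself a consequence of Lemma \ref{basic xi prop} together with the fact that $Z_{\bH^{\bE}}(\bS)^\circ=\bS$ --- and the only verification is that $K$ is the associated projection. The one point to state with care is that an element of $z_{\mathfrak h}(\mathfrak s)$ stabilizes both $E$ and $E^\circ$, which is what legitimizes the identities $A_2P_E=0$ and $A_1P_E=A_1$.
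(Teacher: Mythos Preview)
Your proof is correct and follows exactly the line the paper intends: the paper states the proposition without proof, relying on the direct-sum decomposition (\ref{blabel}) as the entire content, and you have simply spelled out that $K$ is the projection onto the $\hh^{E^\circ}$ summand. There is nothing to add.
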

 
We will later use these exact sequences to calibrate our differential forms on these spaces.  For the moment, we use them to make a simple observation about dimensions.

\begin{cor} \label{dimension_formula}
\begin{equation*}
\dim M-\dim Z_H(S)+ \dim S=\dim N.
\end{equation*}
 \end{cor}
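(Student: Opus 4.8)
The plan is to read off the dimension identity from the four exact sequences just established, treating it as a bookkeeping computation. First I would take the exact sequence for $\mm$, namely \eqref{M_seq}, which gives $\dim \mm = \dim \kappa + \dim \nn_\xi$. Next, the exact sequence for $\nn$, namely \eqref{N_seq}, gives $\dim \nn = \dim \nn_\xi + \dim \Hom(W',E)$. Subtracting, I get $\dim \mm - \dim \nn = \dim \kappa - \dim \Hom(W',E)$. Then the exact sequence for $\kappa$, namely \eqref{kappa_seq}, gives $\dim \kappa - \dim \Hom(W',E) = \dim \hh^{E^\circ}$. Finally the exact sequence for $z_{\hh}(\LS)$, namely \eqref{Z_seq}, together with $\dim z_{\hh}(\LS) = \dim Z_H(S)$ and $\dim \LS = \dim S$, gives $\dim \hh^{E^\circ} = \dim Z_H(S) - \dim S$. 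Chaining these equalities yields $\dim \mm - \dim \nn = \dim Z_H(S) - \dim S$, which is exactly the claimed formula after rearranging (and using that $\dim$ of a group equals $\dim$ of its Lie algebra).

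The only genuine content beyond combining the sequences is to make sure the Lie-algebra dimensions match the group dimensions: $\dim M = \dim \mm$, $\dim N = \dim \nn$, $\dim Z_H(S) = \dim z_{\hh}(\LS)$, and $\dim S = \dim \LS$. These are standard (all the groups here are smooth algebraic groups over a field of characteristic $\neq 2$), so no real work is needed. I would state this briefly and then present the arithmetic.

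I do not anticipate a serious obstacle here; the corollary is essentially immediate once one has the four exact sequences in hand. If anything, the one place to be slightly careful is the direction of the identification $\hh^{E^\circ} = z_{\hh}(\LS)/\LS$ coming from \eqref{blabel} and \eqref{Z_seq} — one must confirm the sum $z_{\hh}(\LS) = \hh^{E^\circ} + \LS$ is direct (stated just after \eqref{blabel}) so that the dimensions add, rather than merely bounding. With that in place the proof is a three-line computation, which I would write out as a short displayed chain of equalities.

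Concretely, I would write: from \eqref{M_seq} and \eqref{N_seq},
\begin{equation*}
\dim M - \dim N = \dim \kappa - \dim \Hom(W',E),
\end{equation*}
from \eqref{kappa_seq} the right-hand side equals $\dim \hh^{E^\circ}$, and from \eqref{Z_seq} (with $\dim z_{\hh}(\LS) = \dim Z_H(S)$ and $\dim \LS = \dim S$) this equals $\dim Z_H(S) - \dim S$; rearranging gives the claim.
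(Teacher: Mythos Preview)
Your proposal is correct and is exactly the argument the paper intends: the corollary is stated immediately after the four exact sequences as a ``simple observation about dimensions,'' with the bookkeeping left to the reader. Your chain of equalities from \eqref{M_seq}, \eqref{N_seq}, \eqref{kappa_seq}, and \eqref{Z_seq} is precisely that bookkeeping, carried out correctly.
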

 
 \subsection{Decompositions} \label{Picture_Pages}
 
\begin{prop} We have a direct sum decomposition
\begin{equation*}
\hh=\hh^E + \hh^{E^\circ}+ \im \Gamma_2.
\end{equation*}
\end{prop}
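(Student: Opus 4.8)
The plan is to establish the direct sum decomposition $\hh = \hh^E + \hh^{E^\circ} + \im \Gamma_2$ by a dimension count together with a proof that the sum is direct; since all three subspaces are explicitly defined, this reduces the claim to linear algebra. First I would record that the sum $\hh^E + \hh^{E^\circ}$ is already direct: an element in the intersection fixes both $E$ and $E^\circ$ pointwise, hence (since $\bE$ and $\bE^\circ$ are complementary nondegenerate subspaces of $\bX$, by Proposition \ref{Enondeg}) is the identity, so lies in $\hh$ as $0$. Then I would compute dimensions. From Proposition \ref{kappaX} we get $\dim \kappa = \dim \hh^{E^\circ} + \dim \Hom(W',E)$, and since $\Gamma_1 \circ \Gamma_2 = \mathrm{id}$ the map $\Gamma_2$ is injective, so $\dim \im \Gamma_2 = \dim \Hom(W',E) = \dim W \cdot \dim E = k \cdot \dim E$ (using $\dim W' = \dim W = k$ and $\dim E = \dim X - k$). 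Separately, $\dim \hh^E = \dim H^E$, which is the dimension of the isometry group of $\bE^\circ$ (of dimension $2n = k$), and $\dim \hh^{E^\circ}$ is the dimension of the isometry group of $\bE$. A standard formula for the dimension of $\hh$ (isometries of $\bX$, $\dim \bX = r + k$) versus the dimensions of the isometry groups of $\bE$ and $\bE^\circ$ gives exactly $\dim \hh = \dim \hh^E + \dim \hh^{E^\circ} + k \cdot r$, which matches; I would verify this identity in both the orthogonal and symplectic cases using $\dim \mathfrak{so}(m) = \binom{m}{2}$ and $\dim \mathfrak{sp}(m) = \binom{m+1}{2}$ and the fact that $\hh$ decomposes as block matrices with an $\hh^{E^\circ}$-block, an $\hh^E$-block, and an off-diagonal $\Hom$-piece plus its adjoint.

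With the dimensions matching, it remains to show that $\im \Gamma_2$ meets $\hh^E + \hh^{E^\circ}$ only in $0$, equivalently (by the dimension count) that $\hh^E + \hh^{E^\circ} + \im \Gamma_2$ is a direct sum. The cleanest route is to analyze how elements of each summand interact with $E$ and $E^\circ$. An element $B \in \hh^E + \hh^{E^\circ}$ preserves both $E$ and $E^\circ$ (each summand does), so $P_E B P_{E^\circ} = 0$ and $P_{E^\circ} B P_E = 0$. On the other hand, for $A \in \Hom(W',E)$, the element $\Gamma_2(A) = A(\xi\xi^*)^{-1}\xi - \xi^*(\xi\xi^*)^{-1}A^*$ has image of its first term inside $E$ (since $A$ maps into $E$) while $\xi$ kills $E$ (as $\ker \xi = E$, by the setup in Section \ref{Section_Section}), so the first term vanishes on $E$ and maps $E^\circ$ into $E$; thus $P_E \Gamma_2(A) P_{E^\circ} = A(\xi\xi^*)^{-1}\xi$ restricted appropriately, and by Lemma \ref{invert} this is zero only if $A = 0$. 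Hence if $B_1 + B_2 + \Gamma_2(A) = 0$ with $B_1 \in \hh^E$, $B_2 \in \hh^{E^\circ}$, applying $P_E(\ \cdot\ )P_{E^\circ}$ forces $A(\xi\xi^*)^{-1}\xi|_{E^\circ} = 0$, hence $A = 0$ (since $\xi|_{E^\circ}$ is surjective onto $W$ and $(\xi\xi^*)^{-1}$ is invertible), and then $B_1 + B_2 = 0$ gives $B_1 = B_2 = 0$ by the directness noted above.

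I expect the main obstacle to be the bookkeeping in the dimension identity $\dim \hh = \dim \hh^E + \dim \hh^{E^\circ} + \dim W \cdot \dim E$, and in particular making sure the $\Hom(W', E)$-piece is counted with the right multiplicity (it accounts for both the off-diagonal block and its $\Phi$-adjoint simultaneously, since $\Gamma_2(A)$ bundles $A$ and $-A^*$ together — this is exactly why a single copy of $\Hom(W',E)$, not two, appears). A secondary subtlety is confirming that $\im \Gamma_2 \subseteq \kappa \subseteq \hh$, i.e.\ that $\Gamma_2(A)$ really is skew-Hermitian with respect to $\Phi$; this is asserted in the text just before Proposition \ref{kappaX}, so I would simply cite it. Once the dimension identity and the directness are in hand, the decomposition follows immediately. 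Alternatively, one can bypass the dimension count entirely: given any $B \in \hh$, set $A := \Gamma_1^{-1}$-preimage data, namely use $\Gamma_2$ to peel off the ``off-diagonal'' part — concretely, $B - \Gamma_2(\Gamma_1(P_E B|_{E^\circ}\text{-data}))$ should preserve $E$ and $E^\circ$, hence lie in $\hh^E + \hh^{E^\circ}$ — but I would only pursue this explicit splitting if the dimension argument proves awkward to state cleanly in both the symmetric and antisymmetric cases.
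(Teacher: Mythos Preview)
Your proposal is correct, and in fact the ``alternative'' you sketch at the end is essentially what the paper does. The paper's proof is constructive: given $X \in \hh$, it writes $X = (P_EXP_E + P_{E^\circ}XP_{E^\circ}) + B$, observes the first bracket lies in $\mm_E = \hh^E + \hh^{E^\circ}$, checks that the remainder $B$ lies in $\kappa$ and satisfies $B = \Gamma_2(\Gamma_1(B))$, and then proves directness by noting $\kappa \cap \mm_E = \hh^{E^\circ}$ and that $\Gamma_2(A) \in \hh^{E^\circ}$ forces $A = \Gamma_1(\Gamma_2(A)) = 0$.

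Your primary route (dimension count plus directness) is a legitimate and genuinely different argument. Your directness step via $P_E(\cdot)P_{E^\circ}$ is clean and correct, and is really the same observation underlying the paper's Figure~1: $\hh^E + \hh^{E^\circ}$ is block-diagonal with respect to $E \oplus E^\circ$, while $\im \Gamma_2$ is purely off-diagonal. The trade-off is that your approach requires the case split on $\dim \mathfrak{so}$ versus $\dim \mathfrak{sp}$ to verify $\dim \hh = \dim \hh^E + \dim \hh^{E^\circ} + k \cdot r$, whereas the paper's explicit splitting avoids any dimension computation and works uniformly. Since you already have the directness argument and the explicit section $X \mapsto \Gamma_2(\Gamma_1(X - P_EXP_E - P_{E^\circ}XP_{E^\circ}))$ in hand, the constructive route is shorter here; I would promote your ``alternative'' to the main argument and drop the dimension count.
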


\begin{proof}
If $X \in \hh$, then $P_EXP_E+P_{E^\circ}XP_{E^\circ} \in \mm_E= \hh^E + \hh^{E^\circ}$, and the difference $B=X- (P_EXP_E+P_{E^\circ}XP_{E^\circ})$ is easily seen to lie in $\kappa$.  
Moreover a computation shows that $B=\Gamma_2(\Gamma_1(B))$. 
To see the sum is direct, first note that $\kappa \cap \mm_E=\hh^{E^\circ}$.  Next, suppose that $\Gamma_2(A) \in \hh^{E^\circ}$.  Then $\Gamma_1(\Gamma_2(A))=A=0$.  It follows that the intersection of $\im \Gamma_2$ with $\mm_E$ is trivial.
\end{proof}

Much of this can be visualized with matrix geometry; we offer the following picture:

\colorlet{lightgray}{black!15}
\begin{center}
\begin{tikzpicture}
 \fill[color=lightgray] (-1,3) rectangle (1,-3);
 \fill[color=lightgray] (-3,1) rectangle (3,-1);
 \fill[color=gray] (-1,1) rectangle (1,-1);
 
 \draw (-1,3) -- (-1,-3);
 \draw (1,3) -- (1,-3);
 \draw (-3,-1) -- (3,-1);
 \draw (-3,1) -- (3,1);
 \draw[line width=0.05cm] (-3,3) arc (165:195:12);
 \draw[line width=0.05cm] (3,-3) arc (-15:15:12);
 \draw[style=dashed] (-3,3) -- (3,-3);
 \path (-2,3.5)node (A) {$E^{\circ}$};
 \path (0,3.5)node (B) {$E$};
 \path (2,3.5)node (C) {$E^{\circ}$};
 \path (-4,-2)node (D) {$E^{\circ}$};
 \path (-4,0)node (E) {$E$};
 \path (-4,2)node (F) {$E^{\circ}$};
 \draw (A); \draw (B); \draw (C); \draw (D); \draw (E); \draw (F);
\end{tikzpicture}
\captionof{figure}{Composition of $\hh$}
 \label{Fig_1}

\end{center}

Figure \ref{Fig_1} represents $\hh$, viewed as matrices in $\GG l(X)$.  The middle rows and columns corresponding to $E$ are shaded (both light gray and dark gray); this is $\kappa$.  The subalgebra $\hh^{E^\circ}$ corresponds to the dark gray central block. The Lie algebra $z_{\HH}(\LS)$ is the sum of the dark gray block and the diagonal. The light gray region corresponds to the image of $\Hom(W',E)$ under $\Gamma_{2}$.  
The unshaded region is $\hh^E$.
 The map $\Xi$ can be visualized by simply deleting the five shaded regions.  
 
 \bigskip
 
Meanwhile, $\nn$ can be expressed as a direct sum $\nn=\nn_\xi + \im \tilde \psi$, by Proposition  \ref{N_exact}.
 The following is a diagram of an essential piece of $\nn$:

\colorlet{lightgray}{black!15}
\begin{center}
\begin{tikzpicture}  
 \fill[color=lightgray] (-1,3) rectangle (1,1);
  
 \draw (-1,3) -- (-1,1);
 \draw (1,3) -- (1,1);
 
 \draw (3,3) --(3,1);
  
 \draw[line width=0.05cm] (-3,3) arc (165:195:4);
 
    \draw[line width=0.05cm] (5,1) arc (-15:15:4);

 \path (-2,3.5)node (A) {$E^{\circ}$};
 \path (0,3.5)node (B) {$E$};
 \path (2,3.5)node (C) {$E^{\circ}$};
 \path (4,3.5) node (D) {$W'$};
 \path (4,2)node (D) {$B$};
 \path (-4,2)node (F) {$W$};
 \draw (A); \draw (B); \draw (C); \draw (D); \draw (E); \draw (F);
\end{tikzpicture}

\captionof{figure}{Blow-up of $\nn$}
 \label{Fig_2}
 \end{center}
 
Figure \ref{Fig_2} is a blow-up of the $(A,B)$ portion of
$\left(\begin{array}{ccc}
0& A &B\\
&0&  -A^*\\
&&0
\end{array}\right) \in \nn.
$
The shaded region corresponds to $\Hom(W',E) \cong \Hom(E,W)$, or the image of $\tilde \psi$.  The unshaded area corresponds to $\nn_\xi$.
 The skew-symmetric matrices $B$  comprise the image of $ 0 \oplus \hh_{E^\circ}$ under $\phi$.  The rest of the unshaded area corresponds to the image of 
 $\GG \oplus 0 \subset \mm$ under $\phi$.

\section{The $\Sha$ map:  Differential Theory} \label{local_now}

\subsection{Derivative of $\Sha$}
Now suppose that $F$ is a local field.  In this paper ``manifold'' means a smooth finite-dimensional $F$-manifold in the sense of \cite{BourbDiff}.
If $X$ is a manifold and $p\in X$, we write $T_pX$ for the tangent space to $X$ at $p$.

Recall our map
\begin{equation*}
\Sha=\Sha_S: M/\Delta_S \times S_r \to N.
\end{equation*}

The derivative
\begin{equation*}
d\Sha_{1 \times \gamma} : T_1( M/\Delta_S) \oplus T_\gm S \to T_{n(\gm)}N.
\end{equation*}
 is straightforward to compute.

\begin{prop} For $A\in \mathfrak g$, $B\in \mathfrak h$ and $Z\in \mathfrak s$, 
we have
\begin{equation} \label{the_expression}
d\Sha_{1 \times \gamma}(A \times B,\gamma Z)=u(A\xi-\xi B, \eta(\gamma)A^{*}+A\eta(\gamma)- \gm_G^2 \Xi(\gm Z) \ups^{-1}).
\end{equation}
\end{prop}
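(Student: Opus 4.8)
The plan is to compute the derivative directly from the explicit formula (\ref{fake_Sha}). Lifting through the submersion $M\times S_r\to M/\Delta_S\times S_r$, I would choose curves $g(t)$ in $G$, $h(t)$ in $H$, and $\gamma(t)$ in $S_r$ with $g(0)=h(0)=1$, $\dot g(0)=A$, $\dot h(0)=B$, $\gamma(0)=\gamma$, and $\dot\gamma(0)=\gamma Z$ (such a $\gamma(t)$ exists because $S_r$ is open in $S$). Then $t\mapsto m(g(t),h(t))\times\gamma(t)$ represents the tangent vector $(A\times B,\gamma Z)$, and (\ref{fake_Sha}) gives
\[
\Sha_S\big(m(g(t),h(t))\times\gamma(t)\big)=n\big(g(t)\,\xi\,h(t)^{-1},\ g(t)\,\eta(\gamma(t))\,g(t)^*\big).
\]
Identifying $T_{n(\gamma)}N$ with $\mathfrak n$ as usual, so that the velocity of a curve $t\mapsto n(\xi(t),\eta(t))$ is $u(\dot\xi(0),\dot\eta(0))$, it then suffices to differentiate the two entries $g(t)\xi h(t)^{-1}$ and $g(t)\eta(\gamma(t))g(t)^*$ at $t=0$.

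The first entry is immediate: by the product rule and $\frac{d}{dt}\big|_{t=0}h(t)^{-1}=-B$, its velocity is $A\xi-\xi B$, the first coordinate of (\ref{the_expression}). For the second entry, since $A\mapsto A^*$ is linear we have $\frac{d}{dt}\big|_{t=0}g(t)^*=A^*$, so the product rule gives velocity $A\,\eta(\gamma)+\eta(\gamma)\,A^*+\frac{d}{dt}\big|_{t=0}\eta(\gamma(t))$. To handle the last term I would use $\eta(h)=(\Xi(h-1))^{-1}\ups^{-1}$ with $\ups^{-1}$ independent of $t$: since $\Xi$ is linear, $\frac{d}{dt}\big|_{t=0}\Xi(\gamma(t)-1)=\Xi(\gamma Z)$, and $\Xi(\gamma-1)$ is invertible with inverse $\gm_G$, so the standard formula for the derivative of a matrix inverse gives $\frac{d}{dt}\big|_{t=0}\eta(\gamma(t))=-\gm_G\,\Xi(\gamma Z)\,\gm_G\,\ups^{-1}$.

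The hard step is to see that $\gm_G\,\Xi(\gamma Z)\,\gm_G=\gm_G^2\,\Xi(\gamma Z)$, i.e.\ that $\Xi(\gamma Z)$ commutes with $\gm_G$. Here I would observe that $\gamma$ fixes $E$ pointwise and preserves $E^\circ$, hence commutes with $P_E$, so $\gamma-1$ does too; and $Z$ commutes with $\gamma$ because $S$ is abelian, so in $\End(X)$ we have $(\gamma-1)(\gamma Z)=\gamma^2Z-\gamma Z=(\gamma Z)(\gamma-1)$. Applying Proposition \ref{propstab}(1)---multiplicativity of $\Xi$ when one factor commutes with $P_E$---to both products yields $\Xi(\gamma-1)\,\Xi(\gamma Z)=\Xi(\gamma Z)\,\Xi(\gamma-1)$, and passing to the inverse of $\Xi(\gamma-1)$ shows that $\Xi(\gamma Z)$ commutes with $\gm_G$. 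Substituting $\frac{d}{dt}\big|_{t=0}\eta(\gamma(t))=-\gm_G^2\,\Xi(\gamma Z)\ups^{-1}$ into the velocity of the second entry and combining it with that of the first gives exactly (\ref{the_expression}). That $u$ of these two velocities really lies in $T_{n(\gamma)}N$ is automatic, since $\Sha_S$ maps into $N$. Apart from the commutation step, everything is a routine application of the product rule and the derivative-of-inverse formula.
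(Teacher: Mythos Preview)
Your argument is correct and supplies exactly the computation the paper omits (the paper simply asserts the derivative ``is straightforward to compute'' and states the result). One small point of wording: when you say ``identifying $T_{n(\gamma)}N$ with $\mathfrak n$ as usual, so that the velocity of $t\mapsto n(\xi(t),\eta(t))$ is $u(\dot\xi(0),\dot\eta(0))$'', this is really the tangent vector at $n(\gamma)$ inside the ambient $\End(V)$, not literally an element of $\nn$ (the second entry need not be skew). That is precisely the convention the paper uses too---it writes the result with the $u(\cdot,\cdot)$ notation and only afterwards right-translates by $n(\gamma)^{-1}$ to land in $T_1N=\nn$---so your formula matches. Your commutation step via Proposition~\ref{propstab}(i) is clean and is the only nonroutine ingredient.
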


For later use we note:
\begin{lemma}\label{zetaim}
$\gm_G^2 \Xi(\gm Z) \in {}^\xi \LS$.
\end{lemma}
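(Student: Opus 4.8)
The plan is to verify the membership $\gm_G^2 \,\Xi(\gm Z) \in {}^\xi\LS$ by a direct computation, exploiting that $Z \in \mathfrak{s}$ is an infinitesimal version of an element of $S$, and that $\gm \in S_r$. First I would recall the relevant definitions: $\gm_G = (\Xi(\gm-1))^{-1}$ by Proposition \ref{About_h_G} (and the surrounding definitions), and $\Xi(A) = \xi A \xi^+$. Since $\gm$ and $Z$ both lie in (the group, resp. Lie algebra of) the torus $S \subseteq H^E$, they commute with each other and with $P_E$. Hence by part (i) of Proposition \ref{propstab}, $\Xi$ is multiplicative on products involving $\gm$, $\gm-1$, $Z$, and $P_{E^\circ}$, so the whole expression can be rewritten purely in terms of $\Xi$ applied to a single endomorphism of $X$ built out of $\gm$ and $Z$.

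The key algebraic step is then: write $\gm_G^2 = \Xi((\gm-1)^{-1})^2 = \Xi((\gm-1-P_E)^{-2})$, using that $(\gm-1)$ restricted to $E^\circ$ is invertible and $\Xi$ kills the $E$-block, so that $\Xi(\gm-1) = \Xi(\gm - 1 - P_E)$ and the latter is genuinely invertible in $\End(W)$ by the lemma just before Definition of $h_G$. Combining, $\gm_G^2\,\Xi(\gm Z) = \Xi\bigl((\gm-1-P_E)^{-2}\,\gm Z\bigr)$, all the factors inside commuting with $P_E$. So it suffices to show that the $X$-endomorphism $Y := (\gm - 1 - P_E)^{-2}\,\gm\, Z$ lies in $\mathfrak{s}$; then $\Xi(Y) \in \Xi(\mathfrak{s}) = {}^\xi\LS$ by definition of ${}^\xi\mathfrak{s} = \Xi(\mathfrak{s})$.

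To see $Y \in \mathfrak{s}$: since $S$ is a torus (in particular abelian), $\mathfrak{s}$ is a commutative Lie algebra acting on $X$ preserving the eigenspace decomposition, and $\gm$ is a fixed regular element of $S$ acting semisimply on $X$ with the same weight-space decomposition. Both $Z$ and $(\gm - 1 - P_E)^{-2}\gm$ are functions of the $S$-action diagonal in these weight spaces; on the fixed space $E$ the operator $Z$ acts as $0$ (since $Z \in \mathfrak s \subseteq \hh^E$, i.e. $Z$ kills $E$), so the factor $(\gm-1-P_E)^{-1}$ — which on $E$ is $-1$ — causes no trouble, and on $E^\circ$ the whole product is again a diagonal operator commuting with $S$. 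The cleanest way to phrase this is: $\mathfrak{s} = z_{\hh}(\mathfrak{s}) \cap \hh^E \cap \{$operators vanishing on $E\}$... more simply, $\mathfrak s$ is exactly the set of $A \in \End(X)$ lying in the $F$-span of the Lie algebra of the torus, and this span is closed under multiplication by any operator that is a polynomial in $\gm$ together with $P_E$, because all such operators commute and preserve weights. I expect the only genuine obstacle is bookkeeping about the $E$-block: one must be careful that $\gm_G$ and $\Xi(\gm Z)$ both implicitly involve the convention that $\Xi$ discards the $E$-rows and $E$-columns, so that the naive manipulation $\Xi(A)\Xi(B) = \Xi(AB)$ is legitimate only after replacing $\gm - 1$ by $\gm - 1 - P_E$ (or equivalently restricting attention to $E^\circ$). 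Once that is set up correctly via Proposition \ref{propstab}(i), the rest is immediate from $\Xi(\mathfrak s) = {}^\xi\LS$.
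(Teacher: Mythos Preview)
Your approach is correct, and in fact the paper gives no proof of this lemma at all (it is stated after ``For later use we note:'' and left as an observation).  So there is nothing to compare against; your argument supplies what the paper omits.

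One point to tighten: your justification that $Y=(\gm-1-P_E)^{-2}\gm Z$ lies in $\mathfrak s$ is a bit loose.  The clean statement is that $\mathfrak s=z_{\hh^E}(S)$, since $S$ is a maximal torus in $H^E$.  You correctly observe $Y|_E=0$ and that $Y$ commutes with $S$, but you should also check $Y\in\hh$, i.e.\ $Y^*=-Y$.  This is a short computation using $\gm^*=\gm^{-1}$, $Z^*=-Z$, $P_E^*=P_E$, and the identity $\gm^{-1}(\gm^{-1}-1-P_E)^{-2}=\gm(\gm-1-P_E)^{-2}$ (verify on $E$ and $E^\circ$ separately).  Once $Y\in\hh^E$ and $[Y,S]=0$, you get $Y\in\mathfrak s$ and hence $\Xi(Y)\in{}^\xi\mathfrak s$.

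An equivalent (and perhaps slicker) route, consonant with the paper's later definition of $\zeta$: use Proposition~\ref{About_h_G}(ii) to get $\tau(\gm_G)=-\Xi(\gm)\gm_G$, hence $\gm_G^2\Xi(\gm)=-\gm_G\tau(\gm_G)$ is $\tau$-fixed in the abelian torus $S_G$.  Then $\gm_G^2\Xi(\gm Z)=\gm_G^2\Xi(\gm)\cdot\Xi(Z)$ with $\Xi(Z)\in{}^\xi\mathfrak s$, and left multiplication by a $\tau$-fixed element of $S_G$ preserves $\mathfrak s_G\cap\GG^\theta={}^\xi\mathfrak s$.
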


The derivative at $m \times \gamma \in  M/\Delta_S \times S_r$ can be inferred from (\ref{the_expression}) through the diagram

 \begin{equation} \label{trans_diagram}
	\xymatrixcolsep{5pc}\xymatrix{
		  T_1( M/\Delta_S) \oplus T_\gm S   \ar[r]^-{ d\Sha_{1 \times \gm}} \ar[d]_{\rho_m} &T_{\Sha(1 \times \gm)} N\ar[d]^{\Ad(m)} \\
		T_m( M/\Delta_S) \oplus T_\gm S  \ar[r]^-{ d\Sha_{m \times \gm}}& T_{\Sha(m \times \gm)} N \\
		}
\end{equation}

Here $\rho_m$ denotes right translation by $m \in M$.

For later use, we multiply (\ref{the_expression}) on the right by $n(\gamma)^{-1}$ to bring it to $T_1N=\nn$.  
This gives
$$\textrm{d}\Sha_{1\times1,\gamma}(A\times B, \gamma Z)n(\gamma)^{-1}=$$
\begin{align}\label{shadiff}
u(A\xi-\xi B, (A\xi-\xi B)\xi^{*}+\eta(\gamma)A^{*}+A\eta(\gamma)- \gm_G^2 \Xi(\gm Z) \ups^{-1}).
\end{align}

The next step is to compute a suitable Jacobian of $\Sha$.

\subsection{Differential Forms and Measures}
 
Let us recall the definition of differential forms and their associated measures.

\begin{defn} If $V$ is a vector space over $F$, write $\Alt^d(V)$ for the space of alternating forms on $d$-tuples of vectors in $V$.
\end{defn}
One denotes by $\underline{v}$ a $d$-tuple $(v_{1},...,v_{d}) \in V^d$.  
We will later employ the following construction from Chapter IX, Section 6 of \cite{BourbLie}:
\begin{defn}
Let
$$0\to U'\overset{i}\to U \overset{p}\to U'' \to 0$$
be an exact sequence of vector spaces, of dimension $s$,$s+t$, and $t$, respectively. Let $\alpha''\in \Alt^{t}(U'')$ and $\alpha'\in\Alt^{s}(U')$. Then there is an alternating form $\alpha''\cap \alpha'\in\Alt^{s+t}(U)$, which is characterized by the following property: If $\underline{v}\in U^{t}$ and $\underline{v'}\in (U')^{s}$, then 
$$\alpha''\cap \alpha'(\underline{v},i(\underline{v'}))= \alpha''(p(\underline{v}))\alpha'(\underline{v'}).$$   
\end{defn}

\begin{defn}
Let $X$ be a manifold.  A (differential) $n$-form $\omega$ on $X$ is a smooth choice of alternating forms $\omega(p)\in \Alt_{n}(T_{p}X)$ for each point $p\in X$.
If $n=\dim X$, then an $n$-form is called a top form.
\end{defn}

Now suppose that $\omega$ is a top form, and put $n=\dim X$.  If $u^1,\ldots, u^n$ are coordinates on an open subset $U$ of $X$, then there is a smooth function $f$ on $U$ so that $\omega |_U$ is the form $f du^1 \wedge \cdots \wedge du^n$.  Then a real-valued measure $|\omega|$ on $X$ may be assembled by combining $|f|$ with the product of fixed Haar measures on the additive group of $F$ via the $u^i$.  (For details see \cite{BourbDiff}.)  In this case $|\omega|$ is called the measure associated to $\omega$.  If $G$ is a Lie group, this gives a one-to-one correspondence between left-invariant differential forms $\omega_G$ on $G$ (up to a nonzero constant in $F$) and left Haar measures $dg=|\omega_G|$ on $G$ (up to a positive constant in $\R$).

Suppose that $Y$ is another manifold. If $h : Y \to X$ is smooth, then the pullback form $h^{*}(\omega)$ on $Y$ is an $n$-form defined by the equation
\begin{equation*}
h^{*}(\omega)(p)(v_{1},...,v_{n})=\omega(h(p))(dh_{p}(v_{1}),...,dh_{p}(v_{n})).
\end{equation*} 
 
We will later use the following result:

\begin{prop}\label{difffibre} 
Let $d\geq 1$. Suppose that $dh$ does not vanish at any point of $Y$, and that the preimage of each point of $X$ has precisely $d$ points. 
Let $f \in L^1(X,|\omega|)$.  Then $f \circ h \in L^1(Y,|h^{*}(\omega)|)$ and we have the identity
$$\int_{X}f(x)|\omega|=\frac{1}{d}\int_{Y}f(h(y))|h^{*}(\omega)|.$$
\end{prop}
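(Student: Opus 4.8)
The plan is to reduce the statement to the ordinary change-of-variables formula for a diffeomorphism by a localization and fibre-counting argument; in particular we need not know that $h$ is a covering map.

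First I would record the basic structure. Since $h^{*}(\omega)$ is an $n$-form on $Y$ with $n=\dim X$ whose associated measure $|h^{*}(\omega)|$ appears in the statement, necessarily $\dim Y=\dim X=n$, and the hypothesis that $dh$ does not vanish means that $dh_{p}\colon T_{p}Y\to T_{h(p)}X$ is an isomorphism for every $p\in Y$. By the inverse function theorem over the local field $F$ (\cite{BourbDiff}), $h$ is then a local diffeomorphism: every point of $Y$ has an open neighbourhood that $h$ carries diffeomorphically onto an open subset of $X$. Since a finite-dimensional manifold over a local field is Lindel\"of, I would choose a countable cover $\{V_{j}\}_{j\ge 1}$ of $Y$ by such neighbourhoods and refine it to a countable Borel partition $Y=\bigsqcup_{j}W_{j}$ with $W_{j}\subseteq V_{j}$; set $U_{j}=h(V_{j})$, so that $h|_{V_{j}}\colon V_{j}\to U_{j}$ is a diffeomorphism of open sets and $h(W_{j})\subseteq U_{j}$ is Borel.

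Next I would invoke the compatibility of the pullback of a top form with its associated measure: for the diffeomorphism $h|_{V_{j}}$ one has $(h|_{V_{j}})^{*}|\omega|=|h^{*}(\omega)|$ on $V_{j}$, so that for any Borel function $g\ge 0$ on $X$,
\begin{equation*}
\int_{W_{j}} g(h(y))\,|h^{*}(\omega)|=\int_{h(W_{j})} g(x)\,|\omega|.
\end{equation*}
Summing over $j$ and applying Tonelli's theorem gives
\begin{equation*}
\int_{Y} g(h(y))\,|h^{*}(\omega)|=\sum_{j}\int_{h(W_{j})} g(x)\,|\omega|=\int_{X} g(x)\Big(\sum_{j}\mathbf{1}_{h(W_{j})}(x)\Big)|\omega|.
\end{equation*}
The combinatorial point is the identity $\sum_{j}\mathbf{1}_{h(W_{j})}(x)=d$ for every $x\in X$: each of the $d$ points of $h^{-1}(x)$ lies in exactly one $W_{j}$ because the $W_{j}$ partition $Y$, while no $W_{j}$ meets $h^{-1}(x)$ in two points because $h$ is injective on $V_{j}\supseteq W_{j}$; hence exactly $d$ of the sets $W_{j}$ meet $h^{-1}(x)$. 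Therefore $\int_{Y} g(h(y))\,|h^{*}(\omega)|=d\int_{X} g(x)\,|\omega|$.

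Finally, applying this with $g=|f|$ shows $\int_{Y}|f(h(y))|\,|h^{*}(\omega)|=d\int_{X}|f|\,|\omega|<\infty$, so $f\circ h\in L^{1}(Y,|h^{*}(\omega)|)$; applying it to the positive and negative parts of $f$ (and the real and imaginary parts, if $f$ is complex-valued) and subtracting yields the asserted identity. I expect the only real obstacle to be care rather than depth: checking that the sets $h(W_{j})$ are measurable and that the change-of-variables formula for a diffeomorphism may be restricted to a Borel subset of a chart, and above all arranging the fibre count so that it comes out to exactly $d$ and not merely a one-sided bound — this is the step that genuinely uses both hypotheses, the everywhere local-diffeomorphism property and the constancy of the fibre cardinality.
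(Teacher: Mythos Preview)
Your argument is correct. The paper does not actually give a proof of this proposition: it is stated without proof as a standard result from the theory of integration on manifolds, presumably to be found in or deduced from \cite{BourbDiff}. So there is no ``paper's own proof'' to compare against; your localization and fibre-counting reduction to the change-of-variables formula for a diffeomorphism is exactly the standard way to establish such a result, and the steps you outline (local diffeomorphism via the inverse function theorem, countable Borel partition of $Y$ subordinate to charts on which $h$ is injective, the identity $\sum_j \mathbf{1}_{h(W_j)}(x)=d$, and the passage from nonnegative $g$ to general $f$) are all sound.
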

 
 \subsection{Jacobian of $\Sha$:  First Step}
 Suppose that $\omega_N$ and $\omega_S$ are invariant top forms on $N$ and $S$, respectively, and that $\omega_{M/\Delta_S}$ is an $M$-invariant top form on $M/\Delta_S$.
(We will specify these later.)

\begin{defn} For $m \in M$, put
\begin{equation*}
\delta_N(m)=\det(\Ad(m); \nn).
\end{equation*}
\end{defn}

\begin{prop} \label{deltas}There is a smooth function $\delta$ on $S_r$ so that
\begin{equation*}
\Sha_S^*(\omega_N)=\delta_N(m)\delta(\gm) \omega_{M/\Delta_S} \wedge \omega_S
\end{equation*}
at the point $m \times \gm \in M/\Delta_S \times S_r$.
\end{prop}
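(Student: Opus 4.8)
The plan is to leverage the $M$-equivariance of $\Sha_S$ recorded in the square (\ref{trans_diagram}) to peel off the factor $\delta_N(m)$, leaving behind a function of $\gm$ alone; the proposition is then essentially a formal consequence of this factorization. First I would record the basic structural facts. The map $\Sha_S$ is smooth: it is given by the rational expression (\ref{fake_Sha}), and on $S_r$ the element $\gm_G$ is defined and depends smoothly on $\gm$ (transparently so from the matrix of $\gm_G$ in Section \ref{MatrixSection}, whose entries are $(t_i-1)^{-1}$ and $(t_i^{-1}-1)^{-1}$). Also $\Delta_S$ is a closed subgroup of $M$, so $M/\Delta_S$ is a manifold and $\Sha_S$ descends to a smooth map on $M/\Delta_S\times S_r$. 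By Corollary \ref{dimension_formula} (together with the injectivity of $\Delta$ on $Z_H(S)$, visible from $\Delta(h)=m(\Xi(h),h)$), the manifold $M/\Delta_S\times S_r$ has the same dimension as $N$. Hence $\omega_{M/\Delta_S}\wedge\omega_S$ is a nowhere-vanishing top form on $M/\Delta_S\times S_r$ while $\Sha_S^*(\omega_N)$ is a top form on it, so there is a unique smooth function $J$ with $\Sha_S^*(\omega_N)=J\cdot(\omega_{M/\Delta_S}\wedge\omega_S)$. It then suffices to prove the multiplicativity $J(m\times\gm)=\delta_N(m)\,J(1\times\gm)$; for then $\delta(\gm):=J(1\times\gm)$ is smooth, and since $\delta_N(1)=1$ this reads exactly as the claimed formula.

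Next I would establish that multiplicativity from equivariance. Directly from (\ref{fake_Sha}) one has
\[
\Sha_S\bigl((mx)\Delta_S\times\gm\bigr)=\Ad(mx)n_S(\gm)=\Ad(m)\bigl(\Sha_S(x\Delta_S\times\gm)\bigr),
\]
that is, $\Sha_S\circ(\ell_m\times\mathrm{id})=c_m\circ\Sha_S$, where $\ell_m$ is left translation by $m$ on $M/\Delta_S$ (which descends to the quotient) and $c_m(n)=mnm^{-1}$ is conjugation on $N$; this is the manifold form of (\ref{trans_diagram}). Pulling $\omega_N$ back along the two routes gives $(\ell_m\times\mathrm{id})^*\bigl(\Sha_S^*\omega_N\bigr)=\Sha_S^*\bigl(c_m^*\omega_N\bigr)$. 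Now $c_m^*\omega_N$ is again a left-invariant top form on $N$ (because $c_m\circ L_n=L_{c_m(n)}\circ c_m$), and at the identity $(c_m^*\omega_N)(1)=\det(\Ad(m);\nn)\,\omega_N(1)$, so $c_m^*\omega_N=\delta_N(m)\,\omega_N$ as forms. On the other side, $\omega_{M/\Delta_S}$ is $M$-invariant and $\omega_S$ is untouched, so $(\ell_m\times\mathrm{id})^*(\omega_{M/\Delta_S}\wedge\omega_S)=\omega_{M/\Delta_S}\wedge\omega_S$. Substituting $\Sha_S^*\omega_N=J\cdot(\omega_{M/\Delta_S}\wedge\omega_S)$ into both sides of the displayed equality of pullbacks and evaluating at the point $1\times\gm$, one reads off $J(m\times\gm)=\delta_N(m)\,J(1\times\gm)$, which completes the argument.

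The proof is essentially formal, so there is no serious obstacle; the one place to take care is the transformation law $c_m^*\omega_N=\delta_N(m)\,\omega_N$, where it matters that one lands on $\det(\Ad(m);\nn)$ and not its reciprocal — I would confirm this normalization against the explicit matrices of Section \ref{MatrixSection}. The substantive work, namely actually evaluating $\delta(\gm)=J(1\times\gm)$ from the derivative formula (\ref{shadiff}) and the four exact sequences of Section \ref{4X}, is deferred to the subsequent sections.
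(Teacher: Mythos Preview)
Your proof is correct and follows essentially the same route as the paper: define the ratio $J$ of the two top forms, use the $M$-equivariance of $\Sha_S$ together with $c_m^*\omega_N=\delta_N(m)\,\omega_N$ and the $M$-invariance of $\omega_{M/\Delta_S}$ to obtain $J(m\times\gm)=\delta_N(m)\,J(1\times\gm)$, and set $\delta(\gm)=J(1\times\gm)$. The only cosmetic difference is that the paper phrases the translation as ``right multiplication by $m_0$'' (its $\rho_{m_0}$), whereas you correctly use left translation $\ell_m$ on $M/\Delta_S$; left translation is what actually descends to the quotient and makes the equivariance $\Sha_S\circ(\ell_m\times\mathrm{id})=c_m\circ\Sha_S$ hold, so your formulation is the cleaner one.
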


\begin{proof}
Since $\Sha_S^*(\omega_N)$ and $\omega_{M/\Delta_S} \wedge \omega_S$ are both top forms on $M/\Delta_S \times S_r$, there is a unique (smooth) function
\begin{equation*}
\delta: M/\Delta_S \times S_r \to F^{\times}
\end{equation*}
so that
\begin{equation*}
\Sha_S^*(\omega_N)=\delta(m,\gm) \omega_{M/\Delta_S} \wedge \omega_S.
\end{equation*}
Let $m_0 \in M$. Call $\rho_{m_0}$ the map given by right multiplication by $m_{0}$. Applying $\rho_{m_0}^*$ to both sides of the equation gives
\begin{equation*}
\delta_N(m_0) \Sha_S^*(\omega_N) =\delta(mm_0,\gm)\omega_{M/\Delta_S} \wedge \omega_S.
\end{equation*}
(We have used $\Ad(m)^*\omega_N=\delta_N(m)\omega_N$.)
Therefore for all $m,m_0$, and $\gm$, we have
\begin{equation*}
\delta(mm_0,\gm)=\delta_N(m_0)\delta(m,\gm).
\end{equation*}
The result follows (put $\delta(\gm)=\delta(1,\gm)$).
\end{proof}
 
\subsection{Choice of Bases}  \label{JacobianSection}
To compute $\delta$, we essentially need to compute the determinant of the differential of $\Sha_S$.   One cannot literally do this, of course, because the differential is a map between two different tangent spaces.  Thus one should relate these two spaces in some natural way.   The exact sequences of Section \ref{4X} allow us to break up these two spaces into the same ``pieces'', and our entire determinant calculation is based on these decompositions.  Happily, the differential of $\Sha_S$ is an upper triangular ``block matrix'' with respect to these pieces.  In this section we set up two bases of $\nn$, namely $\mathfrak B_1$ and $\mathfrak B_2$.  The basis $\mathfrak B_1$ will essentially be the input of $d \Sha_S$, and $\mathfrak B_2$ will essentially be the output.  Then $\delta$ will be proportional to the determinant of a linear transformation $L$ sending $\mathfrak B_1$ to $\mathfrak B_2$.
 Consider a basis $\mathfrak B$ of $\mathfrak m$ of the form
$$\{(\underline{A_{i}},0),(0,\Gamma_2(\underline{\beta_{j}})),(\Xi(\underline{B'_{j'}}),\underline{B'_{j'}}),(0,\underline{C_{k}}),(\Xi(\underline{Z_{l}}),\underline{Z_{l}})\}$$
 
 such that 
\begin{itemize}
\item $\{A_{i}\}$ is a basis of $\GG$.
\item $\{\beta_{j}\}$ is a basis of $\Hom(W',E)$.
\item  $\{B_{j'}'\}$ is a basis of $\hh^{E^\circ}$.
\item $\{Z_{l}\}$ is a basis of $\mathfrak s$.
\item $\{C_{k}\} \cup \{Z_{l}\}$ is a basis of $\hh^E$.
\end{itemize}
  Note that $\{B_{j'}'\}\cup \{Z_{l}\}$ is a basis of $z_{\mathfrak h}(\mathfrak s)$.  Put $B_j=\Gamma_2(\beta_j)$. It can be easily checked that $B_j\xi^{*}=\beta_{j}$.

\bigskip
{\bf Remark:}  Later we will demand that this basis respects certain root space decompositions.
\bigskip

Then the following are two bases of $\mathfrak n$:
$$\mathfrak B_{1}=\{u(\underline{A_{i}}\xi,0),u(\underline{\beta_{j}},0),u(0,\xi\underline{C_{k}}\xi^{*}),u(0,\xi\underline{Z_{l}}\xi^{*})\}$$
\begin{equation*}
\mathfrak B_{2}=\left\{u(\underline{A_{i}}\xi,-),u(\underline{\beta_{j}},0),u(-\xi\underline{C_{k}},-\xi\underline{C_{k}}\xi^{*}),u\left(0,- \gm_G^2 \Xi(\gm Z) \ups^{-1}\right) \right\}
\end{equation*}
where
$$u(A_{i}\xi,-)=u\big( A_{i}\xi, A_{i}\xi\xi^{*}+ \eta(\gamma)A_{i}^{*} + A_{i}\eta(\gamma) \big).$$ 

To see that $\mathfrak B_1$ is a basis, note that $u(\underline{A_{i}}\xi,0)$, $u(0,\xi\underline{C_{k}}\xi^{*})$, and $u(0,\xi\underline{Z_{l}}\xi^{*})$ comprise a basis of $\nn_\xi$ and that
$u(\underline{\beta_{j}},0)$ gives a basis of $\im \tilde \psi$.

Finally, write $p\mathfrak B$ for
\begin{equation*}
p\mathfrak B= \left\{  (p(\underline{A_{i}},0);0);(p(0,\underline{B_j)};0);(p(0,\underline{C_{k}});0);(0;\underline{\gamma Z_{l}}) \right\},
\end{equation*}
a basis of $\mathfrak m/\Delta_{\LS} \oplus \LS$.  Here $p$ is the projection from $\mm$ to $\mathfrak m/\Delta_{\LS}$.

In the next section we will calibrate our forms $\omega_N$ and $\omega_{M/\Delta_S \times S}$ so that for all $\gm \in S_r$, we have
\begin{equation} \label{calibration}
\omega_{N}(1)[\mathfrak B_{1}]=\pm \omega_{M/\Delta_S \times S}(1 \times \gamma)[p \mathfrak B].
\end{equation}
Let us assume this now for the sake of exposition.

\begin{prop} \label{delta/det} Suppose that $L=L(\gm): \nn \to \nn$ is a linear transformation so that $L(\mathfrak B_{1})=\mathfrak B_{2}$.  Then $\delta=\pm  \det L$.
 \end{prop}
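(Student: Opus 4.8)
The plan is to unwind the definition of $\delta$ from Proposition \ref{deltas} in terms of the differential $d\Sha_S$, evaluated at the base point $1 \times \gm$, and then reinterpret the determinant using the two bases $\mathfrak B_1$ and $\mathfrak B_2$ of $\nn$. By Proposition \ref{deltas}, at the point $1 \times \gm$ we have $\Sha_S^*(\omega_N) = \delta(\gm)\, \omega_{M/\Delta_S} \wedge \omega_S$, where we use $\delta_N(1) = 1$. Unwinding the definition of the pullback, this says that for the basis $p\mathfrak B$ of $T_1(M/\Delta_S) \oplus T_\gm S$,
\begin{equation*}
\omega_N(n(\gm))\bigl[ d\Sha_{1 \times \gm}(p\mathfrak B) \bigr] = \delta(\gm)\, \omega_{M/\Delta_S \times S}(1 \times \gm)[p\mathfrak B].
\end{equation*}

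Next I would push everything back to $T_1 N = \nn$ by right translation by $n(\gm)^{-1}$, which is how formula (\ref{shadiff}) was set up. Since $\omega_N$ is invariant, right translation does not change the value of $\omega_N$ up to the sign/scaling already accounted for; the image of the basis vectors of $p\mathfrak B$ under $d\Sha_{1\times\gm}$ followed by $\rho_{n(\gm)^{-1}}$ is computed termwise from (\ref{shadiff}). The key observation — which is exactly the content of how $\mathfrak B_2$ was defined — is that $d\Sha_{1\times\gm}$ sends the basis elements coming from $p\mathfrak B$ precisely to the elements of $\mathfrak B_2$: the $(\underline{A_i},0)$ terms give $u(A_i\xi,-)$, the $(0,\underline{B_j})$ terms give $u(\underline{\beta_j},0)$ (using $B_j\xi^* = \beta_j$ and that the $\xi B \xi^*$ part and the $A\eta + \eta A^*$ parts combine appropriately, recalling $B_j \in \kappa$ so the image under $\phi$ hits $\im \tilde\psi$), the $(0,\underline{C_k})$ terms give $u(-\xi C_k, -\xi C_k \xi^*)$, and the $(0,\underline{\gamma Z_l})$ terms give $u(0, -\gm_G^2 \Xi(\gm Z)\ups^{-1})$. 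Thus $d\Sha_{1\times\gm}$ carries $p\mathfrak B$ to $\mathfrak B_2$ (after the right translation normalization).

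Now I invoke the calibration hypothesis (\ref{calibration}), which says $\omega_N(1)[\mathfrak B_1] = \pm\, \omega_{M/\Delta_S \times S}(1\times\gm)[p\mathfrak B]$. Combining the three displayed relations: the left side of the pullback identity equals $\omega_N(1)[\mathfrak B_2]$ (up to the invariance sign), and $\mathfrak B_2 = L(\mathfrak B_1)$ by hypothesis on $L$, so $\omega_N(1)[\mathfrak B_2] = \omega_N(1)[L(\mathfrak B_1)] = \det(L)\, \omega_N(1)[\mathfrak B_1]$. Chaining with (\ref{calibration}) and the pullback identity, the $\omega_{M/\Delta_S\times S}(1\times\gm)[p\mathfrak B]$ factors cancel, leaving $\delta(\gm) = \pm \det L(\gm)$, as claimed.

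The main obstacle is the bookkeeping in the second paragraph: verifying carefully that $d\Sha_{1\times\gm}$, after right translation, really does send the particular basis $p\mathfrak B$ to the particular basis $\mathfrak B_2$ term by term. This requires matching each block of (\ref{shadiff}) against the stated formulas for the elements of $\mathfrak B_2$ — in particular checking the middle block where $B = C_k \in \hh^E$ contributes $u(-\xi C_k, -\xi C_k\xi^* + \eta A^* + A\eta - \ldots)$ with $A = 0$, so the $A$-dependent terms drop and one is left with $u(-\xi C_k, -\xi C_k \xi^*)$; and confirming that for $B = B_j = \Gamma_2(\beta_j)$ the output is $u(\beta_j, 0)$, which uses $\xi B_j = $ (the relevant restriction) together with $B_j\xi^* = \beta_j$ and the skew part vanishing. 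Once this identification is in hand, the determinant argument is formal. I would also note explicitly that all the $\pm$ signs are harmless since the final statement only asserts equality up to sign.
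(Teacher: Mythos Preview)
Your proposal is correct and follows essentially the same argument as the paper: evaluate $\Sha_S^*\omega_N$ at $1\times\gm$ on the basis $p\mathfrak B$, right-translate by $n(\gm)^{-1}$ and use (\ref{shadiff}) to identify the image with $\mathfrak B_2$, then apply $\omega_N(1)[L(\mathfrak B_1)]=\det(L)\,\omega_N(1)[\mathfrak B_1]$ together with the calibration (\ref{calibration}). Your bookkeeping for the $(0,B_j)$ term is slightly muddled (there are no $A\eta$ contributions since $A=0$; the point is simply that $\xi B_j\xi^*=0$ because $B_j\xi^*=\beta_j$ has image in $E=\ker\xi$, leaving $u(-\xi B_j,0)$, which matches the $u(\beta_j,0)$ entry of $\mathfrak B_2$), but this is exactly the verification you flagged and it goes through.
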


\begin{proof}
Recall that $\delta$ is the function on $S_r$ so that at $1 \times \delta \in M/\Delta_S \times S_r$, we have
\begin{equation*}
\Sha_S^* \omega_N= \delta(\gm) \omega_{M/\Delta_S} \wedge \omega_S.
\end{equation*}

  Now,
\begin{equation} \label{Guest}
\Sha_{S}^{*}\omega_{N}(1\times \gamma)[(p(\underline{A_{i}},0);0);(p(0,\underline{B_j});0);(p(0,\underline{C_{k}});0);(0;\underline{\gamma Z_{l}})]
\end{equation}
is equal to 
$$\omega_{N}(n(\gamma))[d\Sha(p(\underline{A_{i}},0);0);d\Sha(p(0,\underline{B_{j}});0);d\Sha(p(0,\underline{C_{k}});0);d\Sha(0;\underline{\gamma Z_{l}})].$$

By (\ref{shadiff}), we have
\begin{equation*}
\begin{split}
d\Sha(p(\underline{A_{i}},0);0) n(\gm)^{-1}&=u(\ul{A_i} \xi,\ul{A_i} \xi \xi^*+\eta(\gm) \ul{A_i}^*+\ul{A_i} \eta(\gm)), \\
d\Sha(p(0,\underline{B_{j}});0)n(\gm)^{-1}&=u(-\xi \ul{B_j},0),\\
d\Sha(p(0,\underline{C_{k}});0)n(\gm)^{-1}&=u(-\xi \ul{C_k},-\xi \ul{C_k} \xi^*), \text{ and}\\
d\Sha(0;\underline{\gamma Z_{l}})n(\gm)^{-1}&=u\left(0,-\gm_G^2 \Xi(\gm Z) \ups^{-1}\right).\\
\end{split}
\end{equation*}
(Since $B_j \xi^*$ has image in $E$, we have $\xi B_j \xi^*=0$.)

Therefore (\ref{Guest}) is simply
\begin{equation*}
\begin{split}
(\Sha_S^* \omega_N)(1\times \gamma) [\mathfrak B] &= \omega_N(1)[\mathfrak B_2] \\
						 &= \omega_N(1)[L(\mathfrak B_1)] \\
 						&=(\det L(\gm))\omega_{N}(1)[\mathfrak B_1]\\
						&=\pm ( \det L(\gm)) \cdot \omega_{M/\Delta_S \times S}(1 \times \gamma)[\mathfrak B].\\
\end{split}
\end{equation*}						
 The result follows.
 \end{proof}
  
The fact that $\delta$ is nowhere vanishing has an important corollary. 
  
\begin{cor} \label{immersion} The map $\Sha_S$ is an immersion, and $N^S$ is an open subset of $N$.
\end{cor}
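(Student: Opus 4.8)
The plan is to deduce Corollary \ref{immersion} directly from Proposition \ref{delta/det} together with the nonvanishing of $\delta$, which was established in Proposition \ref{deltas} (where $\delta$ takes values in $F^\times$) and re-confirmed in Proposition \ref{delta/det} (as $\delta = \pm \det L$ with $L$ a linear isomorphism sending one basis to another). First I would observe that the equality $\Sha_S^*(\omega_N) = \delta_N(m)\delta(\gm)\, \omega_{M/\Delta_S} \wedge \omega_S$ from Proposition \ref{deltas}, combined with the fact that $\delta_N(m) \in F^\times$ (it is a determinant of an invertible operator) and $\delta(\gm) \in F^\times$, shows that $\Sha_S^*(\omega_N)$ is a nowhere-vanishing top form on $M/\Delta_S \times S_r$. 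Since $\omega_N$ is itself a nowhere-vanishing top form on $N$, the pullback being nonvanishing forces the derivative $d\Sha_S$ to be an isomorphism at every point (a linear map $T \to T'$ between spaces of the same dimension pulls back a nonzero top form to a nonzero top form precisely when it is invertible, and by Corollary \ref{dimension_formula} the dimensions do agree). Hence $d\Sha_S$ is everywhere nonsingular, i.e. $\Sha_S$ is an immersion; since domain and target have equal dimension, it is in fact a local diffeomorphism (an \'etale map, or a submersion as well).

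Next I would conclude that $N^S$, being the image of the local diffeomorphism $\Sha_S$, is open in $N$: a local diffeomorphism is an open map, so the image of the open set $M/\Delta_S \times S_r$ is open. (Alternatively one can invoke Theorem \ref{Theorem 1}, where $\bN_r$ was shown to be Zariski-open in $\bN$ and $N^S$ is one of the pieces; but the differential-geometric argument via the local diffeomorphism is cleaner here and is the one I would present, since we are now in the local-field setting and ``open'' means open in the $F$-analytic topology.) I would spell out that $\Sha_S \colon M/\Delta_S \times S_r \to N$ is a submersion with discrete (indeed finite, of cardinality $|W_H(S)|$) fibres by the Corollary following Proposition on the fibre of $\Sha_S$, hence a covering map onto its image, and in particular an open map.

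The only step requiring a word of care is verifying that $d\Sha_S$ is an isomorphism rather than merely injective or surjective; this is immediate once one notes that $\dim(M/\Delta_S \times S_r) = \dim M - \dim \Delta_S + \dim S = \dim M - \dim Z_H(S) + \dim S = \dim N$ by Corollary \ref{dimension_formula} (using $\dim \Delta_S = \dim Z_H(S)$, since $\Delta$ is injective on $M_E \supseteq Z_H(S)$ by Proposition \ref{propstab}), so a nowhere-vanishing pullback of a top form certifies pointwise invertibility of $d\Sha_S$. I do not anticipate a genuine obstacle here; the content of the corollary is entirely contained in the preceding Jacobian computation, and the proof is just the standard ``nonvanishing Jacobian $\Rightarrow$ local diffeomorphism $\Rightarrow$ open image'' chain of implications. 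The proof in the paper will therefore be short: assert that $\delta$ is nowhere zero (already proved), invoke the inverse function theorem to get that $\Sha_S$ is a local diffeomorphism, and note that a local diffeomorphism has open image.
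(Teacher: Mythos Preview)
Your proposal is correct and matches the paper's proof in substance: the paper simply cites Corollary~\ref{dimension_formula}, Proposition~\ref{delta/det}, and diagram~(\ref{trans_diagram}), which together encode exactly the chain ``$\delta$ nonvanishing $\Rightarrow$ $d\Sha_S$ invertible at every point $\Rightarrow$ local diffeomorphism $\Rightarrow$ open image'' that you spell out. Your route via Proposition~\ref{deltas} (the pullback formula at all points) is slightly more direct than the paper's, which establishes nonvanishing at $1\times\gm$ via Proposition~\ref{delta/det} and then uses~(\ref{trans_diagram}) to propagate to general $m\times\gm$; but the content is identical.
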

  
\begin{proof} This follows from Corollary \ref{dimension_formula}, Proposition \ref{delta/det}, and (\ref{trans_diagram}).
\end{proof}
     
\subsection{Choice of Differential Forms}

We now pin down differential forms on $M/\Delta_S \times S$ and $N$.  
The exact sequences from Section \ref{4X} give a natural way to build both of these forms from the same pieces.
      
Choose left-invariant differential forms $\omega_{G}$, $\omega_{H}$, $\omega_{S}$ and $\omega_{Z_{H}(S)}$ on the groups $G, H,S$, and $Z_H(S)$.
When convenient, we will simply write $\omega_G$ for $\omega_G(1)$ at the identity of $G$, and similarly for other groups.  Note that specifying an invariant differential form at $1$ prescribes its values on the entire group.
Also fix an alternating form $\omega_{(W',E)}$ of top degree on $\textrm{Hom}(W',E)$.
These five choices will determine all the forms that we want.

Write $\omega_{M}$ for the product of $\omega_G$ and $\omega_H$ on $M$.  
Next, using the exact sequence corresponding to $z_{\mathfrak h}(\mathfrak s)$, define $\omega_{H^{E^\circ}}$, a left invariant differential form on $H^{E^\circ}$, so that $\omega_{Z_{H}(S)}=\omega_{H^{E^\circ}}\cap\omega_{S}$. Using this and the exact sequence corresponding to $\kappa$, we define a top degree alternating form  $\omega_{\kappa}=\omega_{(W',E)}\cap\omega_{H^{E^\circ}}$.

We now define a form of top degree $\omega_{\mathfrak n_{\xi}}$ on $\mathfrak  n_{\xi}$ using the exact sequence corresponding to $\mathfrak m$, i.e., so that $\omega_{M}=\omega_{\mathfrak n_{\xi}}\cap\omega_{\kappa}$. 

Using the exact sequence corresponding to $\mathfrak n$ and the definition of $\omega_{\mathfrak n_{\xi}}$, we define $\omega_{N}$ as $\omega_{(W',E)}\cap \omega_{\mathfrak n_{\xi}}$.

Using the short exact sequence:
$$0 \to z_{\mathfrak h}(\mathfrak s)\overset{\Delta} \to \mathfrak m \overset{p} \to \mathfrak m/\Delta_\LS \to 0$$
(where $\Delta(z)=(\Xi(z),z)$) we define $\omega_{M/\Delta_S}$ such that it satisfies
$$\omega_{M}=\omega_{M/\Delta_S}\cap \omega_{Z_{H}(S)}.$$  
Also, denote by $\omega_{M/\Delta_S \times S}$ the form $\textrm{pr}_{1}^{*}(\omega_{M/\Delta_S})\wedge \textrm{pr}_{2}^{*}(\omega_{S})$ where $\textrm{pr}_{i}$ is the projection to the $i$th coordinate of $M/\Delta_S\times S$.
 
 Recall the choice of basis from the previous section.  The product
 \begin{equation*}
\omega_{M}[ (\underline{A_{i}},0),(0,\underline{B_{j}}),(\Xi(\underline{B'_{j'}}),\underline{B'_{j'}}),(0,\underline{C_{k}}),(\Xi(\underline{Z_{l}}),\underline{Z_{l}}) ]\omega_{S}(\underline{Z_{l}})
\end{equation*}
is equal, up to a sign, to
\begin{equation*}
 \omega_{M/\Delta_S\times S}(1,\gamma)[ (p(\underline{A_{i}},0);0);(p(0,\underline{B_{j}});0);(p(0,\underline{C_{k}});0);((0,0);\underline{\gamma Z_{l}}) ]\cdot {\omega_{Z_{H}(S)}(\underline{B'_{j'}},\underline{Z_{l}})}.
\end{equation*}

  Consider the basis $\mathfrak B_{1}$ defined above. Clearly, $\{u(\underline{A_{i}}\xi,0),u(0,\xi\underline{C_{k}}\xi^{*}),u(0,\xi\underline{Z_{l}}\xi^{*})\}$ is a basis of $\mathfrak n_{\xi}$ while $\{\psi(u(-\xi B_{j},0))\}$ is a basis of $\textrm{Hom}(W',E)$.  

We have
\begin{equation*}
\omega_{M}[\mathfrak B] = \pm \omega_{N}[\mathfrak B_{1}]\omega_{\hh^{E^\circ}}(\underline{B'_{j'}}),
\end{equation*}
and
 \begin{equation*}
 \omega_{M/\Delta_S\times S}(1,\gamma)[p \mathfrak B]=\pm \omega_{N}[\mathfrak B_{1}].
\end{equation*}

Remark:  We omit the calculation of the signs $\pm$ above, since it is only the associated measures that we require.  They depend only on the dimensions of the various groups.

\section{Root Vectors} \label{routes}

Our remaining task is to describe $L(\gm)$ and compute its determinant.
To this end, we demand that the basis $\mathfrak B$ of $\mathfrak m$ introduced in Section \ref{JacobianSection} respects certain root space decompositions.
We may pass to an algebraic extension of $F$ for this calculation, since this does not change the determinant.  Therefore the tori $S$ and $S_G$ are split, and we may take them to be diagonal.
In particular one may refer to the usual basis $e_i$ of characters of $S_G$.

\subsection{Root Vectors}

Let us record the image and fibres of the restriction map
\begin{equation*}
\res: R(\bG,\bS_{\bG}) \to \Hom({}^\xi \bS,\Gm),
\end{equation*}
which we write as $\alpha \mapsto \alpha_{\res}$.

Write $R(\bG,\bS_{\bG})^\theta$ for the fixed points of $R(\bG,\bS_{\bG})$ under $\theta$, and $R(\bG,\bS_{\bG})_0$ for the complement of $R(\bG,\bS_{\bG})^\theta$ in $R(\bG,\bS_{\bG})$.

\begin{lemma}
\begin{enumerate}
\item In the symplectic case, the map $\res$ maps $R(\bG,\bS_{\bG})$ onto $R(\bG^\theta,{}^\xi \bS)$.  Let $\beta \in R(\bG^\theta,{}^\xi \bS)$.  If $\beta$ is a long root, its fibre is a singleton in $R(\bG,\bS_{\bG})^\theta$.  If $\beta$ is a short root, its fibre consists of a $\theta$-orbit of roots in $R(\bG,\bS_{\bG})_0$.
\item In the orthogonal case, the map $\res$ maps $R(\bG,\bS_{\bG})_0$ onto $R(\bG^\theta,{}^\xi \bS)$.  If $\alpha \in R(\bG,\bS_{\bG})^\theta$, then $\alpha_{\res} \notin R(\bG^{\theta},{}^\xi \bS)$.  The fibres over $R(\bG^\theta,{}^\xi \bS)$ are  $\theta$-orbits of roots in $R(\bG,\bS_{\bG})_0$.
\end{enumerate}
\end{lemma}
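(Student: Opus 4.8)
The plan is to reduce the statement to an explicit computation with the root systems of $\bG = \GL(W)$ and $\bG^\theta = \Isom(W,\Psi_W)$ relative to the diagonal tori $\bS_{\bG}$ and ${}^\xi\bS$. First I would fix the diagonal coordinates: by the matrix description in Section~\ref{MatrixSection}, an element of ${}^\xi\bS$ has eigenvalues $s_1,\dots,s_n,s_n^{-1},\dots,s_1^{-1}$ on $W$ (the $s_i$ being the $t_i$ from the torus $\gm$), so that ${}^\xi\bS$ sits inside the maximal torus $\bS_{\bG}$ of $\GL(W)$ of all diagonal matrices $\diag(u_1,\dots,u_n,u_{n+1},\dots,u_{2n})$, cut out by $u_{2n+1-i} = u_i^{-1}$. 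The involution $\theta(g) = \upsilon^{-1}(g^*)^{-1}\upsilon$ acts on $\bS_{\bG}$ by $\diag(u_i) \mapsto \diag(u_{2n+1-i}^{-1})$, i.e. on the character lattice by $e_i \mapsto -e_{2n+1-i}$. With $m := 2n$, I would write the roots of $\GL_m$ as $e_i - e_j$ ($i\neq j$), determine the $\theta$-action $e_i - e_j \mapsto e_{m+1-j} - e_{m+1-i}$, and identify the restriction map $\res$ as the quotient by the sublattice $\langle e_i + e_{m+1-i}\rangle$, under which $e_i \mapsto \bar e_i$ with $\bar e_{m+1-i} = -\bar e_i$.

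Next I would simply enumerate. The $\theta$-fixed roots $R(\bG,\bS_{\bG})^\theta$ are exactly those $e_i - e_j$ with $\{m+1-j, m+1-i\} = \{i,j\}$, i.e. $j = m+1-i$; these restrict to $\bar e_i - \bar e_{m+1-i} = 2\bar e_i$. The remaining roots come in $\theta$-orbits $\{e_i - e_j,\ e_{m+1-j} - e_{m+1-i}\}$ of size two (size one would force $j=m+1-i$, already excluded), and both members restrict to $\bar e_i - \bar e_j$. Now I distinguish the two cases by what $R(\bG^\theta,{}^\xi\bS)$ actually is. In the symplectic case $\Psi_W$ is symmetric (note: $\tau$ is built from $\Phi|_X$ which in this section I take split of the opposite type, so $G^\theta$ is an even orthogonal group $\SO_{2n}$ — one must track this sign carefully), with roots $\pm\bar e_i \pm \bar e_j$ ($i<j$, in the appropriate index range), all of one length; here $\res$ is onto, the characters $2\bar e_i$ from $\theta$-fixed roots are \emph{not} roots of $\SO_{2n}$, so actually the "long/short" dichotomy must be phrased via $\Symp_{2n}$ — I would recheck the excerpt's convention and match it. In the orthogonal case $\Psi_W$ is antisymmetric and $G^\theta = \Symp_{2n}$, with short roots $\pm\bar e_i \pm \bar e_j$ and long roots $\pm 2\bar e_i$; then $\res(R_0)$ hits exactly the short roots $\bar e_i - \bar e_j$ via the size-two $\theta$-orbits, while $\res$ of a $\theta$-fixed root is $2\bar e_i$, which \emph{is} a (long) root of $\Symp_{2n}$ — contradicting the stated "$\alpha_{\res}\notin R(\bG^\theta,{}^\xi\bS)$". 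The resolution, which I would make precise, is that $\bS_{\bG}$ is strictly larger than the span of ${}^\xi\bS$ inside the orthogonal/symplectic group: the extra torus directions $e_i + e_{m+1-i}$ mean the ambient group attached to $\res$ is not $\Symp_{2n}$ but a group of type $C_n \times (\text{something})$, so $2\bar e_i$ genuinely fails to be a root of the relevant $\bG^\theta$. Getting this ambient-group bookkeeping exactly right is the crux.

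So the key steps, in order, are: (i) write down explicit diagonal coordinates for $\bS_{\bG}$, ${}^\xi\bS$, and the $\theta$-action, using Section~\ref{MatrixSection} and the formula $\tau(A) = \upsilon^{-1}A^*\upsilon$; (ii) identify $\res$ on character lattices as quotient by $\langle e_i + e_{m+1-i}\rangle$ and compute the image of every root; (iii) in the symplectic case, verify $\res$ is surjective onto $R(\bG^\theta,{}^\xi\bS)$, that fibres over long roots are singletons lying in $R^\theta$, and fibres over short roots are size-two $\theta$-orbits in $R_0$, by matching root lengths to orbit sizes; (iv) in the orthogonal case, verify $\res(R_0) = R(\bG^\theta,{}^\xi\bS)$ with size-two $\theta$-orbit fibres, and separately that $\res$ of a $\theta$-fixed root lands outside $R(\bG^\theta,{}^\xi\bS)$. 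The main obstacle will be step (i)–(ii): correctly pinning down which classical group $G^\theta$ is (including the parity/sign of the form, which flips between the symplectic and orthogonal inputs) and, relatedly, keeping straight the distinction between the big torus $\bS_{\bG}$ and the small torus ${}^\xi\bS$, since the whole long-root-vs-$\theta$-fixed-root phenomenon is an artifact of that size gap. Once the dictionary is set up correctly, (iii) and (iv) are short case checks on rank-two root subsystems.
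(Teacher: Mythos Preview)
Your overall strategy---reduce to an explicit computation with $A_{2n-1}$ roots and the involution $e_i\mapsto -e_{2n+1-i}$---is exactly right, and once the dictionary is correct the lemma is a short case check. The paper gives no proof, treating it as routine, and your outline (i)--(iv) is the natural argument.

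However, you have the parity of $\Psi_W$ backwards, and this is what creates the apparent contradiction you try to resolve with ``ambient-group bookkeeping.''  From the definition $\Psi_W(w_1,w_2)=\Phi(\xi^+ w_1,\xi^+ w_2)$, with both $\xi^+ w_i\in X$, the form $\Psi_W$ has the \emph{same} symmetry type as $\Phi$, not the opposite.  Hence in the symplectic case $G^\theta=\Isom(W,\Psi_W)$ is symplectic (type $C_n$), and in the orthogonal case $G^\theta$ is even orthogonal (type $D_n$).  With this correction your enumeration goes through cleanly: the $\theta$-fixed roots $e_i-e_{2n+1-i}$ restrict to $2\bar e_i$, which \emph{is} a long root of $C_n$ (symplectic case, matching part i) and is \emph{not} a root of $D_n$ (orthogonal case, matching part ii); the size-two $\theta$-orbits restrict to $\pm\bar e_i\pm\bar e_j$ ($i\neq j$), which are the short roots of $C_n$ and all the roots of $D_n$.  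No further ``ambient-group'' subtlety is needed---$\bS_{\bG}$ is simply the full diagonal torus of $\GL_{2n}$, and $\res$ is the obvious quotient by $\langle e_i+e_{2n+1-i}\rangle$.
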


Let $\alpha \in R(\bG,\bS_{\bG})$.  If $A_{\alpha}$ is a root vector for $\alpha$, then $\tau(A_{\alpha})$ is a root vector for $\theta(\alpha)$.  We will normalize our root vectors so that if $\{ \alpha, \theta(\alpha) \}$ is an orbit in $R(\bG,\bS_{\bG})_0$, then 
$A_{\theta(\alpha)}=\tau(A_{\alpha})$.  Note in the symplectic case that if $\alpha \in R(\bG,\bS_{\bG})^\theta$, then $\tau(A_{\alpha})=-A_{\alpha}$.
  
\begin{defn} Let $\alpha \in R(\bG,\bS_{\bG})_0$ and $\alpha' =\theta(\alpha)$.  Put $\beta=\alpha_{\res}$.  Put $A_{\beta}=X_\alpha-X_{\alpha'}$, and $C_\beta=\Xi^+(A_{\beta})$ (see Definition \ref {xidef}).
\end{defn}
Then $A_{\beta}$ is a root vector for $\beta$ in $\GG^{\theta}$, and $C_\beta$ is a root vector for $\Xi^+(\beta)$ in $\hh^E$.

Our transformation $L$ should preserve certain two and three-dimensional subspaces $\nn_\alpha$ of $\nn$.   
 
\begin{defn}
 Let $\alpha \in R(\bG,\bS_{\bG})_0$.  Write $\mathfrak n_{\alpha}$ for the three-dimensional space spanned by $u(A_{\alpha}\xi,0)$, $u(A_{\alpha'}\xi,0)$, and $u(0,\xi C_{\beta}\xi^{*})$. Define a linear transformation $L_{\alpha}:\mathfrak n_{\alpha}\to \mathfrak n$ so that :
 \begin{equation*}
 \begin{split}
L_{\alpha}( u(0,\xi C_{\beta}\xi^{*})) &= u(-\xi C_{\beta},-\xi C_{\beta}\xi^{*}) \\
L_{\alpha}(u(A_{\alpha}\xi,0)) &=u\big( A_{\alpha}\xi, A_{\alpha}\xi\xi^{*}+ \eta(\gamma)A_{\alpha}^{*} + A_{\alpha}\eta(\gamma) \big)\\
 L_{\alpha}(u(A_{\alpha'}\xi,0)) &=u\big( A_{\alpha'}\xi, A_{\alpha'}\xi\xi^{*}+ \eta(\gamma)A_{\alpha'}^{*} + A_{\alpha'}\eta(\gamma) \big).\\
\end{split}
\end{equation*}
\end{defn}

 If $\alpha$ corresponds to the root $e_{i}-e_{j}$ of $\bS_{\bG}$ in $\GL(\bW)$,  write $\lambda^{+}_{\alpha},\lambda^{-}_{\alpha}$ for the characters of $S_{G}$ corresponding to $e_{i}$ and $e_{j}$ respectively.   
\begin{prop} \label{nalpha}
$L_{\alpha}$ maps $\nn_{\alpha}$ to $\nn_{\alpha}$.  Viewing $L_{\alpha}$ as a linear endomorphism of $\nn_{\alpha}$, we have
\begin{equation*}
\det L_{\alpha}=\lambda^-_\alpha(\gm_G)-\lambda_\alpha^+(\gm_G).
\end{equation*}
 
\end{prop}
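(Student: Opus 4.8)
The plan is to prove both assertions by an explicit matrix computation. As the text permits, extend scalars and pass to a basis in which $\bS_{\bG}$ (hence $\gm_G$ and ${}^\xi\bS$) is diagonal; then $\tau$ acts on the diagonal torus of $\GL(W)$ by $\diag(d_{i})\mapsto\diag(d_{i'})$ with $i'=\dim W+1-i$, the root vector $A_{\alpha}$ for $\alpha=e_{a}-e_{b}$ may be taken to be the matrix unit $E_{ab}$, so that $\lambda^{+}_{\alpha}(\gm_G)=(\gm_G)_{aa}$ and $\lambda^{-}_{\alpha}(\gm_G)=(\gm_G)_{bb}$, and $A_{\alpha'}=\tau(A_{\alpha})$ is the matrix unit $E_{b'a'}$ up to a sign irrelevant to the computation. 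Since $\alpha\in R(\bG,\bS_{\bG})_{0}$ we have $\alpha'\neq\alpha$, and it follows at once that $u(A_{\alpha}\xi,0)$, $u(A_{\alpha'}\xi,0)$, $u(0,\xi C_{\beta}\xi^{*})$ are linearly independent, so $\nn_{\alpha}$ really is three-dimensional.

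The third basis vector is handled directly: from $\xi\xi^{+}=I_{W}$ one has $\xi C_{\beta}=(A_{\alpha}-A_{\alpha'})\xi$ and $\xi C_{\beta}\xi^{*}=(A_{\alpha}-A_{\alpha'})\xi\xi^{*}$, hence
\begin{equation*}
L_{\alpha}\bigl(u(0,\xi C_{\beta}\xi^{*})\bigr)=u\bigl(-(A_{\alpha}-A_{\alpha'})\xi,\,-(A_{\alpha}-A_{\alpha'})\xi\xi^{*}\bigr)=-u(A_{\alpha}\xi,0)+u(A_{\alpha'}\xi,0)-u(0,\xi C_{\beta}\xi^{*}),
\end{equation*}
which already lies in $\nn_{\alpha}$ with coordinate column $(-1,1,-1)$.

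For $u(A_{\alpha}\xi,0)$, the first component of $L_{\alpha}(u(A_{\alpha}\xi,0))$ is $A_{\alpha}\xi$, which forces the coefficients of $u(A_{\alpha}\xi,0)$ and $u(A_{\alpha'}\xi,0)$ to be $1$ and $0$; the content of $L_{\alpha}(\nn_{\alpha})\subseteq\nn_{\alpha}$ is then precisely that $A_{\alpha}\xi\xi^{*}+\eta(\gamma)A_{\alpha}^{*}+A_{\alpha}\eta(\gamma)$ is a scalar multiple of $(A_{\alpha}-A_{\alpha'})\xi\xi^{*}$. Cancelling the invertible $\xi\xi^{*}$ on the right and using $\eta(\gamma)=\gm_G\ups^{-1}$, $\xi\xi^{*}=\ups^{-1}$ and $\tau(B)=\ups^{-1}B^{*}\ups$ (so $\eta(\gamma)A_{\alpha}^{*}(\xi\xi^{*})^{-1}=\gm_G\tau(A_{\alpha})=\gm_G A_{\alpha'}$ and $A_{\alpha}\eta(\gamma)(\xi\xi^{*})^{-1}=A_{\alpha}\gm_G$), this reduces to showing $A_{\alpha}+\gm_G A_{\alpha'}+A_{\alpha}\gm_G$ is proportional to $A_{\alpha}-A_{\alpha'}$ in $\End(W)$. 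Here the key ingredient is the Norm relation $1+\gm_G+\tau(\gm_G)=0$ of Proposition~\ref{About_h_G}, which says $(\gm_G)_{ii}+(\gm_G)_{i'i'}=-1$ for every $i$: computing with the matrix units, $A_{\alpha}+\gm_G A_{\alpha'}+A_{\alpha}\gm_G=(1+(\gm_G)_{bb})E_{ab}+(\gm_G)_{b'b'}E_{b'a'}=(1+\lambda^{-}_{\alpha}(\gm_G))(A_{\alpha}-A_{\alpha'})$. So the column of $u(A_{\alpha}\xi,0)$ is $(1,0,\,1+\lambda^{-}_{\alpha}(\gm_G))$, and the same computation for $u(A_{\alpha'}\xi,0)$ (now the two roots exchange roles, via $\tau(A_{\alpha'})=A_{\alpha}$) gives $A_{\alpha'}+\gm_G A_{\alpha}+A_{\alpha'}\gm_G=\lambda^{+}_{\alpha}(\gm_G)(A_{\alpha}-A_{\alpha'})$, i.e.\ the column $(0,1,\,\lambda^{+}_{\alpha}(\gm_G))$; this establishes $L_{\alpha}(\nn_{\alpha})\subseteq\nn_{\alpha}$.

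Finally, $L_{\alpha}$ is represented on $\nn_{\alpha}$ by
\begin{equation*}
\begin{pmatrix} 1 & 0 & -1\\ 0 & 1 & 1\\ 1+\lambda^{-}_{\alpha}(\gm_G) & \lambda^{+}_{\alpha}(\gm_G) & -1\end{pmatrix},
\end{equation*}
whose determinant is $\bigl(1+\lambda^{-}_{\alpha}(\gm_G)\bigr)-\lambda^{+}_{\alpha}(\gm_G)-1=\lambda^{-}_{\alpha}(\gm_G)-\lambda^{+}_{\alpha}(\gm_G)$, as claimed. I expect the genuine work to be the stability $L_{\alpha}(\nn_{\alpha})\subseteq\nn_{\alpha}$ --- that the three a priori unrelated terms $A_{\alpha}\xi\xi^{*}$, $\eta(\gamma)A_{\alpha}^{*}$ and $A_{\alpha}\eta(\gamma)$ collapse onto the single line through $(A_{\alpha}-A_{\alpha'})\xi\xi^{*}$ --- which is exactly what the Norm identity $1+\gm_G+\tau(\gm_G)=0$ delivers; the remaining $3\times 3$ determinant is routine.
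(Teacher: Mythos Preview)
Your proof is correct and follows essentially the same approach as the paper: reduce to showing that $A_{\alpha}\xi\xi^{*}+\eta(\gamma)A_{\alpha}^{*}+A_{\alpha}\eta(\gamma)$ lies on the line through $(A_{\alpha}-A_{\alpha'})\xi\xi^{*}$ by invoking the Norm identity $1+\gm_G+\tau(\gm_G)=0$, then compute the $3\times 3$ determinant. The only cosmetic difference is that the paper records the third-row entries as $-\lambda^{+}_{\alpha'}(\gm_G)$ and $\lambda^{+}_{\alpha}(\gm_G)$, whereas you write $1+\lambda^{-}_{\alpha}(\gm_G)$ and $\lambda^{+}_{\alpha}(\gm_G)$; since $1+\lambda^{-}_{\alpha}(\gm_G)=-\lambda^{+}_{\alpha'}(\gm_G)$ by the same identity, these coincide.
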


\begin{proof}
Let us check that the element 
\begin{equation}
A_{\alpha}\xi\xi^{*}+ \eta(\gamma)A_{\alpha}^{*} + A_{\alpha}\eta(\gamma)
\end{equation}
is a multiple of $\xi C_{\beta} \xi^*= (A_{\alpha}-A_{\alpha'}) \ups^{-1}$.
Multiplying on the right by $\ups$ gives
\begin{equation*}
\begin{split}
A_{\alpha}+ \gamma_G A_{\alpha'} + A_{\alpha}  \gamma_G &= A_{\alpha}+A_{\alpha'} \lambda_{\alpha'}^+(\gm_G)+A_{\alpha}\lambda_\alpha^-(\gm_G) \\
													&= A_{\alpha}(1+\lambda_{\alpha'}^+(\gm_G)+\lambda_{\alpha}^-( \gm_G))-\lambda_{\alpha'}^+( \gm_G) \Xi(C_\alpha).\\
\end{split}
\end{equation*}
It is not hard to see that generally 
\begin{equation*}
\lambda_\alpha^-(\tau(s))=\lambda_{\alpha'}^+(s)
\end{equation*}
for $s \in \bS_{\bG}$.
Together with the fact that $1+ \gm_G+\tau(\gm_G)=0$, this implies that $1+\lambda_{\alpha'}^+(\gm_G)+\lambda_{\alpha}^-( \gm_G)=0$.
Thus,
\begin{equation*}
L_{\alpha}(u(A_{\alpha}\xi,0))=u(A_{\alpha}\xi,0)-\lambda_{\alpha'}^+( \gm_G)u(0,\xi C_{\beta}\xi^{*}).
\end{equation*}
Similarly, 
\begin{equation*}
L_{\alpha}(u(A_{\alpha'}\xi,0))=u(A_{\alpha'} \xi,0) + \lambda_{\alpha}^+( \gm_G)u(0,\xi C_{\beta}\xi^{*}).
\end{equation*}
Of course,
\begin{equation*}
L_{\alpha}( u(0,\xi C_{\beta}\xi^{*})) =  -u(A_{\alpha}\xi,0)+u(A_{\alpha'}\xi,0)-u(0,\xi C_{\beta}\xi^{*}),
\end{equation*}
and the result follows.
\end{proof}

\begin{defn} Suppose we are in the symplectic case.  Let $\alpha \in R(\bG,\bS_{\bG})^\theta$. Fix a root vector $A_\alpha$ in $\GG$. Put $\beta=\alpha_{\res}$ and $C_\beta=\Xi^+(A_{\alpha})$.
  Write $\mathfrak n_{\alpha}$ for the two-dimensional space spanned by $u(A_{\alpha}\xi,0)$ and $u(0,\xi C_{\beta}\xi^{*})$. Define a linear transformation $L_{\alpha}:\mathfrak n_{\alpha}\to \mathfrak n$ so that :
 \begin{equation*}
 \begin{split}
L_{\alpha}( u(0,\xi C_{\beta}\xi^{*})) &= u(-\xi C_{\beta},-\xi C_{\beta}\xi^{*}) \\
L_{\alpha}(u(A_{\alpha}\xi,0)) &=u\big( A_{\alpha}\xi, A_{\alpha}\xi\xi^{*}+ \eta(\gamma)A_{\alpha}^{*} + A_{\alpha}\eta(\gamma) \big).\\
\end{split}
\end{equation*}
\end{defn}

Similarly to before we have:

\begin{prop} $L_{\alpha}$ maps $\nn_{\alpha}$ to $\nn_{\alpha}$.  Viewing $L_{\alpha}$ as a linear endomorphism of $\nn_{\alpha}$, we have
\begin{equation*}
\det L_{\alpha}=\lambda_\alpha^-(\gm_G)-\lambda_\alpha^+(\gm_G).
\end{equation*}
\end{prop}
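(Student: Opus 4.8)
\emph{Proof proposal.} The plan is to imitate the proof of Proposition~\ref{nalpha}, in the degenerate situation where the two root vectors $A_\alpha$ and $A_{\alpha'}$ appearing there have collapsed into one (because now $\theta(\alpha)=\alpha$), and where the normalization $A_{\theta(\alpha)}=\tau(A_\alpha)$ is replaced by the symplectic relation $\tau(A_\alpha)=-A_\alpha$ recorded just above the definition of $L_\alpha$.

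First I would check that $L_\alpha$ maps $\nn_\alpha$ into $\nn_\alpha$. Since $C_\beta=\Xi^{+}(A_\alpha)=\xi^{+}A_\alpha\xi$ and $\xi\xi^{+}$ is the identity on $W$, we get $\xi C_\beta=A_\alpha\xi$, whence
\[
L_\alpha\big(u(0,\xi C_\beta\xi^{*})\big)=u(-A_\alpha\xi,-\xi C_\beta\xi^{*})=-u(A_\alpha\xi,0)-u(0,\xi C_\beta\xi^{*})\in\nn_\alpha .
\]
For $u(A_\alpha\xi,0)$ the point is that the $W'\to W$ part of $L_\alpha(u(A_\alpha\xi,0))$, namely $A_\alpha\xi\xi^{*}+\eta(\gamma)A_\alpha^{*}+A_\alpha\eta(\gamma)$, is a scalar multiple of $\xi C_\beta\xi^{*}=A_\alpha\ups^{-1}$. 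Multiplying on the right by $\ups$ and using $\xi\xi^{*}=\ups^{-1}$, $\eta(\gamma)=\gm_G\ups^{-1}$, and $\tau(A)=\ups^{-1}A^{*}\ups$, this component times $\ups$ equals $A_\alpha+\gm_G\tau(A_\alpha)+A_\alpha\gm_G$. Substituting $\tau(A_\alpha)=-A_\alpha$ and using that left (resp.\ right) multiplication of the root vector $A_\alpha$ by $\gm_G$ scales it by $\lambda_\alpha^{+}(\gm_G)$ (resp.\ $\lambda_\alpha^{-}(\gm_G)$), the component times $\ups$ becomes $\big(1-\lambda_\alpha^{+}(\gm_G)+\lambda_\alpha^{-}(\gm_G)\big)A_\alpha$.

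The crucial identity is $1+\lambda_\alpha^{+}(\gm_G)+\lambda_\alpha^{-}(\gm_G)=0$, which I would deduce exactly as in Proposition~\ref{nalpha}: multiply the relation $1+\gm_G+\tau(\gm_G)=0$ (the first assertion of Proposition~\ref{About_h_G}, applicable since $\gm\in S_r$) on the left by $A_\alpha$, and use $A_\alpha\gm_G=\lambda_\alpha^{-}(\gm_G)A_\alpha$ together with $\lambda_\alpha^{-}(\tau(s))=\lambda_{\theta(\alpha)}^{+}(s)=\lambda_\alpha^{+}(s)$. It follows that the $W'\to W$ component above equals $-2\lambda_\alpha^{+}(\gm_G)\,\xi C_\beta\xi^{*}$, so
\[
L_\alpha\big(u(A_\alpha\xi,0)\big)=u(A_\alpha\xi,0)-2\lambda_\alpha^{+}(\gm_G)\,u(0,\xi C_\beta\xi^{*}) .
\]
Hence, in the ordered basis $\{u(A_\alpha\xi,0),\,u(0,\xi C_\beta\xi^{*})\}$ of $\nn_\alpha$, the matrix of $L_\alpha$ is $\left(\begin{smallmatrix}1&-1\\ -2\lambda_\alpha^{+}(\gm_G)&-1\end{smallmatrix}\right)$, with determinant $-1-2\lambda_\alpha^{+}(\gm_G)$; rewriting $-1-\lambda_\alpha^{+}(\gm_G)=\lambda_\alpha^{-}(\gm_G)$ by the crucial identity yields $\det L_\alpha=\lambda_\alpha^{-}(\gm_G)-\lambda_\alpha^{+}(\gm_G)$, as desired.

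I do not expect a genuine difficulty here, this being essentially a specialization of Proposition~\ref{nalpha}. The one delicate point is to make sure the factor $2$ produced by $\tau(A_\alpha)=-A_\alpha$ is reabsorbed via $1+\lambda_\alpha^{+}(\gm_G)+\lambda_\alpha^{-}(\gm_G)=0$ and does not survive into the final formula, and to apply the left/right-multiplication conventions for $\lambda_\alpha^{\pm}$ and the twist $\lambda_\alpha^{-}\circ\tau=\lambda_\alpha^{+}$ consistently with the three-dimensional computation.
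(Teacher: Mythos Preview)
Your proposal is correct and follows exactly the approach the paper intends: the paper's own proof is just the phrase ``Similarly to before we have,'' and what you have written is precisely the specialization of the Proposition~\ref{nalpha} computation to the case $\theta(\alpha)=\alpha$, using $\tau(A_\alpha)=-A_\alpha$ and then reabsorbing the resulting factor of~$2$ via $1+\lambda_\alpha^{+}(\gm_G)+\lambda_\alpha^{-}(\gm_G)=0$. Your matrix and determinant are right.
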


{\bf Remark:} Suppose we are in the orthogonal case. Let $\alpha \in R(\bG,\bS_{\bG})^\theta$. Then similar calculations as in the proof of Proposition \ref {nalpha} gives that $$u\big( A_{\alpha}\xi, A_{\alpha}\xi\xi^{*}+ \eta(\gamma)A_{\alpha}^{*} + A_{\alpha}\eta(\gamma) \big)=u(A_{\alpha}\xi,0).$$

\subsection{$L$ and its determinant.}
Let us specify the choice of bases from Section \ref{JacobianSection} more precisely.  We will take the basis $\{ A_i\}$ of $\GG$ to be the union of a basis of $\LS_{\GG}$ and a basis of root vectors $A_\alpha$  for $S_G$, normalized as above.  Next, we choose for $\{ C_k \}$ the basis of root vectors of $S$ in $\hh^{E}$, with $C_\beta=\Xi(A_\beta)$ or $C_\alpha=\Xi(A_{\alpha})$ as specified in the previous section.
  
Let $\zeta:{}^\xi \LS \to {}^\xi \LS$ be the transformation given by 
\begin{equation*}
\zeta(Z)=\gm_G \tau(\gm_G)Z.
\end{equation*}
 
Note that $\det \zeta=\det \gm_G$.

$\newline$Now define $L:\mathfrak n\to \mathfrak n$ so that:
\begin{enumerate}
\item $L( u(A,0))=u(A,0)$ if $A\in \Hom(E,W)$. \\
\item $L( u(A\xi,0))=u\big( A\xi, A\xi\xi^{*}+ \eta(\gamma)A^{*} + A\eta(\gamma) \big)$ if $A\in \mathfrak s_{\mathfrak g}$. \\
\item $L|_{\mathfrak n_{\alpha}}=L_{\alpha}$ for $\alpha \in R(\bG,\bS_{\bG})$ in the symplectic case, and for $\alpha \in R(\bG,\bS_{\bG})_0$ in the orthogonal case. $L( u(A_{\alpha}\xi,0))=u(A_{\alpha}\xi,0)$ for $\alpha \in R(\bG,\bS_{\bG})^{\theta}$ in the orthogonal case.\\
\item $L( u(0,Z) )= u(0,\zeta (Z))$ if $Z\in {}^\xi \LS$.
\end{enumerate}
In fact, $L$ is constructed precisely to take the ordered basis $\mathfrak B_{1}$ to $\mathfrak B_{2}$ (see Section \ref{JacobianSection}).
 
\begin{prop} The quantity $|\det L|=|\det L(\gm)|$ is given by
\begin{equation} \label{bigprod}
| \det \gm_G| \cdot \prod_{\{\alpha \}}  |\lambda_\alpha^+(\gm_G)-\lambda_\alpha^-(\gm_G)|.
\end{equation}
In the orthogonal case, the product is taken over $\theta$-orbits $\{\alpha,\alpha'\}  \in R(\bG,\bS_{\bG})_0$, each of which has order $2$.  In the symplectic case, the product is taken over $\theta$-orbits in $R(\bG,\bS_{\bG})$, which have order $1$ or $2$.
\end{prop}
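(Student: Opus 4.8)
The plan is to read $\det L$ off the block structure of $L$ relative to the decomposition of $\nn$ underlying the basis $\mathfrak B_1$. I would first record the splitting
\begin{equation*}
\nn \;=\; \im\tilde\psi \;\oplus\; \Bigl(\bigoplus_{\{\alpha\}}\nn_\alpha\Bigr) \;\oplus\; \Bigl(\bigoplus_{\alpha\in R(\bG,\bS_\bG)^\theta} F\cdot u(A_\alpha\xi,0)\Bigr) \;\oplus\; \bigl(\,\{u(A\xi,0)\mid A\in\LS_\GG\}\oplus\{u(0,\xi Z_l\xi^*)\}\,\bigr),
\end{equation*}
where in the orthogonal case $\{\alpha\}$ runs over the order-two $\theta$-orbits in $R(\bG,\bS_\bG)_0$ and the third summand is present, while in the symplectic case $\{\alpha\}$ runs over all $\theta$-orbits in $R(\bG,\bS_\bG)$ and the third summand is empty. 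That these spaces exhaust $\nn$ follows by combining the description of $\mathfrak B_1$ with the restriction lemma, which matches the root vectors $C_k$ of $S$ in $\hh^E$ with exactly these orbits. I would then order $\mathfrak B_1$ so that the $\im\tilde\psi$-vectors come first, then the vectors spanning the $\nn_\alpha$, then the lines $F\cdot u(A_\alpha\xi,0)$ with $\alpha\in R(\bG,\bS_\bG)^\theta$ (orthogonal case), then the $u(0,\xi Z_l\xi^*)$, and finally the $u(A\xi,0)$ with $A\in\LS_\GG$; the claim is that the matrix of $L$ is block triangular with respect to this order, so that $|\det L|$ is the product of the absolute values of the determinants of the diagonal blocks.

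Next I would identify these diagonal blocks. On $\im\tilde\psi$ the map $L$ is the identity (directly from the definition of $L$), as it is on each line $F\cdot u(A_\alpha\xi,0)$, $\alpha\in R(\bG,\bS_\bG)^\theta$, in the orthogonal case (again from the definition, justified by the accompanying Remark); both contribute $1$. On each $\nn_\alpha$ the map is $L_\alpha$, which preserves $\nn_\alpha$ and has determinant $\lambda_\alpha^-(\gm_G)-\lambda_\alpha^+(\gm_G)$ by Proposition \ref{nalpha} and, for the order-one orbits in the symplectic case, by the analogous proposition. On the last summand $L$ has the shape $\left(\begin{smallmatrix}I&*\\0&\zeta\end{smallmatrix}\right)$ relative to the splitting $\{u(A\xi,0)\}\oplus\{u(0,\xi Z_l\xi^*)\}$: the first coordinate $A\xi$ of $L(u(A\xi,0))$ forces the coefficient $1$ on $u(A\xi,0)$ and $0$ on the remaining $u(A_i\xi,0)$ and the $u(\beta_j,0)$, while on the $u(0,\xi Z_l\xi^*)$-part one computes $L(u(0,\xi Z_l\xi^*))=u(0,-\gm_G^2\,\Xi(\gm Z_l)\,\ups^{-1})$, which still lies in that part by Lemma \ref{zetaim} and, after using $\Xi(\gm)=1+\gm_G^{-1}$ together with $1+\gm_G+\tau(\gm_G)=0$ from Proposition \ref{About_h_G}, is precisely the map $\zeta$, with $\det\zeta=\det\gm_G$. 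Multiplying the diagonal blocks gives $|\det L|=|\det\gm_G|\cdot\prod_{\{\alpha\}}|\lambda_\alpha^+(\gm_G)-\lambda_\alpha^-(\gm_G)|$ with $\{\alpha\}$ ranging as above.

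The step that takes real work — and the main obstacle — is the remaining input to the previous paragraph: for $A\in\LS_\GG$, the second coordinate $A\xi\xi^*+\eta(\gm)A^*+A\eta(\gm)$ of $L(u(A\xi,0))$ must lie in the span of the $\xi Z_l\xi^*$, so that $L(u(A\xi,0))$ has no component on the $\nn_\alpha$ or on the $R(\bG,\bS_\bG)^\theta$-lines. To see this I would multiply on the right by $\ups$ and use $\eta(\gm)=\gm_G\ups^{-1}$ and $\tau(g)=\ups^{-1}g^*\ups$ to rewrite the expression as $A+\gm_G\bigl(A+\tau(A)\bigr)$. Since $\gm_G=(\Xi(\gm)-1)^{-1}$ is a polynomial in the torus element $\Xi(\gm)\in{}^\xi S$, it lies in the centre of $S_G=Z_G({}^\xi S)$ and hence commutes with $A$ and with $A+\tau(A)$; so the relation $1+\gm_G+\tau(\gm_G)=0$ gives $\tau\bigl(A+\gm_G(A+\tau(A))\bigr)=-\bigl(A+\gm_G(A+\tau(A))\bigr)$, i.e. this element lies in $\LS_\GG\cap\GG^\theta$, which is precisely ${}^\xi\LS$ because ${}^\xi S$ is a maximal torus of $G^\theta$. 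Unwinding $\ups$ then puts the second coordinate in the span of the $\xi Z_l\xi^*$, which is the block triangularity we wanted, and the displayed formula for $|\det L|$ follows. Taking absolute values also disposes of the dependence of $\lambda_\alpha^\pm(\gm_G)$ on the choice of representative in each $\theta$-orbit.
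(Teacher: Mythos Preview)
Your proof is correct and follows the paper's (implicit) approach: the proposition is meant to be read off directly from the block description of $L$ in items (i)--(iv), and you have made this block-triangular structure explicit.

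One remark: the step you flag as ``the main obstacle'' is not actually needed. For $A\in\LS_{\GG}$, the element $u(0,\,A\xi\xi^*+\eta(\gm)A^*+A\eta(\gm))$ lies in the second-coordinate part $\{u(0,B):B^*+B=0\}$ of $\nn$, and this entire subspace is spanned by the vectors $u(0,\xi C_k\xi^*)$ and $u(0,\xi Z_l\xi^*)$ (since $\Xi$ carries $\hh^E$ isomorphically onto $\GG^\theta$). Thus $L$ sends your block~5 into blocks~2, 4 and~5, all of which are admissible for upper-triangularity in your ordering; there is no need to pin the second coordinate down to ${}^\xi\LS$ specifically. Your finer claim that it does land in ${}^\xi\LS$ is nonetheless true, and your argument for it (via $1+\gm_G+\tau(\gm_G)=0$ and $\LS_{\GG}\cap\GG^\theta={}^\xi\LS$) is valid.
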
 
 
\begin{lemma} Let $t \in S_G$.  Then 
\begin{enumerate}
\item  \begin{equation*}
\prod_{\alpha \in R(\bG,\bS_{\bG})_0}  \lambda_\alpha^+(t) \lambda_\alpha^-(t)=(\det t)^{2(\dim W-2)}.
\end{equation*}
\item In the symplectic case,
\begin{equation*}
\prod_{\alpha \in R(\bG, \bS_{\bG})^\theta}  \lambda_\alpha^+(t) \lambda_\alpha^-(t)=(\det t)^2.
\end{equation*}
\end{enumerate} 
\end{lemma}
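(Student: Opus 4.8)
The plan is to reduce both identities to elementary bookkeeping with the weights of $S_G$, once the action of $\theta$ on $S_G$ and on $R(\bG,\bS_\bG)$ is made completely explicit. Write $2n=\dim W$. Since this is a determinant computation, we may (as already noted at the start of Section~\ref{routes}) pass to an algebraic extension, so $S_G$ is the diagonal torus of $G=\GL(\bW)$ and $e_1,\dots,e_{2n}$ are the associated characters, the weights of $S_G$ on $\bW$; in particular $\det t=\prod_{i=1}^{2n}e_i(t)$ for $t\in S_G$. As computed in Section~\ref{MatrixSection}, $\theta$ acts on $S_G$ by $\diag(u_1,\dots,u_{2n})\mapsto\diag(u_{2n}^{-1},\dots,u_1^{-1})$; equivalently $\theta^{\ast}e_i=-e_{\sigma(i)}$, where $\sigma(i)=2n+1-i$. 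The structural feature that matters (and the place where $\dim W$ even guarantees what we need) is that $\sigma$ is a fixed-point-free involution of $\{1,\dots,2n\}$.

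First I would record the consequence for roots: a root $\alpha=e_i-e_j$ (so $i\neq j$) is $\theta$-fixed iff $e_{\sigma(j)}-e_{\sigma(i)}=e_i-e_j$, i.e.\ iff $j=\sigma(i)$. Hence $R(\bG,\bS_\bG)^{\theta}=\{\,e_i-e_{\sigma(i)}\mid 1\le i\le 2n\,\}$ is a set of $2n$ roots, and $R(\bG,\bS_\bG)_0$ (write $R_0$ for short) consists of the $e_i-e_j$ with $i\neq j$ and $j\neq\sigma(i)$. With the convention that $\lambda^{+}_{\alpha}$ and $\lambda^{-}_{\alpha}$ are the characters $e_i$ and $e_j$ attached to $\alpha=e_i-e_j$, the products in the statement are products of the $e_m$, so the whole content is to count the total multiplicity with which each fixed character $e_m$ occurs.

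For part (i): in $\prod_{\alpha\in R_0}\lambda^{+}_{\alpha}(t)\lambda^{-}_{\alpha}(t)=\prod_{e_i-e_j\in R_0}e_i(t)e_j(t)$, the character $e_m$ contributes once for every root of $R_0$ with positive index $m$ --- these are the $e_m-e_j$ with $j\notin\{m,\sigma(m)\}$, of which there are $2n-2$ --- and once for every root with negative index $m$, again $2n-2$ of them. Hence $e_m$ occurs with total multiplicity $2(2n-2)=2(\dim W-2)$, uniformly in $m$, and the product equals $(\det t)^{2(\dim W-2)}$. For part (ii): in $\prod_{\alpha\in R(\bG,\bS_\bG)^{\theta}}\lambda^{+}_{\alpha}(t)\lambda^{-}_{\alpha}(t)=\prod_{i=1}^{2n}e_i(t)e_{\sigma(i)}(t)$ the character $e_m$ occurs exactly twice, once from the term $i=m$ and once from the term $i=\sigma(m)$, so the product is $(\det t)^2$. (In the orthogonal case this second product is not needed, though the same count would give the same value.)

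I do not expect a genuine obstacle: the one thing requiring care is the identification of $R(\bG,\bS_\bG)^{\theta}$ --- equivalently of the pairing $\sigma$ --- which is precisely the explicit description of $\theta$ on $S_G$ from Section~\ref{MatrixSection}, together with the reminder to count \emph{roots}, not $\theta$-orbits, so that the multiplicities are not inadvertently halved.
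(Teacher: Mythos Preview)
Your proof is correct. The paper states this lemma without proof, so there is nothing to compare against; your explicit count of the multiplicity of each $e_m$ in the two products is exactly the elementary verification the authors are leaving to the reader, and it goes through cleanly once you have identified $R(\bG,\bS_{\bG})^\theta=\{e_i-e_{\sigma(i)}\}$ via the description of $\theta$ on the diagonal torus.
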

 
Put 
\begin{equation*}
L_0(\gm)= \prod_{\{\alpha\} \in R_0}  |\lambda_\alpha^+(\gm_G)-\lambda_\alpha^-(\gm_G)|,
 \end{equation*}
where the product is over $\theta$-orbits $\{\alpha,\alpha'\}  \in R(\bG,\bS_{\bG})_0$.

Similarly set
\begin{equation*}
L_\theta(\gm)= \prod_{\{\alpha\} \in R^\theta}  |\lambda_\alpha^+(\gm_G)-\lambda_\alpha^-(\gm_G)|,
 \end{equation*}
where the product is over $\alpha  \in R(\bG,\bS_{\bG})^\theta$.

Thus $L(\gm)=|\det \gm_G| L_0(\gm) L_\theta(\gm)$ in the symplectic case, and $L(\gm)=|\det \gm_G| L_0(\gm)$ in the orthogonal case.

\begin{lemma}
\begin{enumerate}
 \item 
 \begin{equation*}
 \lambda_\alpha^+(\gm_G)-\lambda_\alpha^-(\gm_G)=\lambda_{\alpha'}^+(\gm_G)-\lambda_{\alpha'}^-(\gm_G)
 \end{equation*}
 \item 
 \begin{equation*}
  \lambda_\alpha^+(\gm_G)-\lambda_\alpha^-(\gm_G)=(\lambda_{\alpha}^-({}^\xi \gm)-\lambda_{\alpha}^+({}^\xi \gm))\lambda_{\alpha}^+(\gm_G) \lambda_{\alpha}^-(\gm_G) .
 \end{equation*}
 \end{enumerate}
 \end{lemma}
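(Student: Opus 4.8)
The plan is to reduce both identities to two structural relations satisfied by $\gm_G$. The first is the definition $\gm_G=\bigl(\Xi(\gm-1)\bigr)^{-1}$; since $\Xi$ is linear and $\Xi(1_X)=\xi\xi^{+}=1_W$, this can be rewritten as the identity of endomorphisms of $W$
\begin{equation*}
\gm_G\cdot{}^\xi\gm=1+\gm_G,
\end{equation*}
where ${}^\xi\gm=\Xi(\gm)$. The second is $1+\gm_G+\tau(\gm_G)=0$ from Proposition \ref{About_h_G}(i). Note that $\gm_G\in S_G$ by Proposition \ref{About_h_G}(iii) (as $\gm\in S_r$ centralizes $S$), that ${}^\xi\gm\in{}^\xi S\subseteq S_G$, and that $\gm\in S_r\subseteq S_{-1}$ forces $\Xi(\gm-1)$, hence $\gm_G$, to be invertible. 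Having passed to an extension over which $S_G$ is the diagonal torus of $\GL(W)$, the elements $\gm_G$, ${}^\xi\gm$ and $\tau(\gm_G)=-1-\gm_G$ are all diagonal matrices, on which every weight $\lambda_\alpha^{\pm}=e_i$ of $S_G$ is simultaneously additive and multiplicative.

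For (ii) I would apply $\lambda_\alpha^{+}$ and $\lambda_\alpha^{-}$ to $\gm_G\cdot{}^\xi\gm=1+\gm_G$. Writing $a=\lambda_\alpha^{+}(\gm_G)$ and $b=\lambda_\alpha^{-}(\gm_G)$ (both nonzero by the above), this yields $\lambda_\alpha^{+}({}^\xi\gm)=1+a^{-1}$ and $\lambda_\alpha^{-}({}^\xi\gm)=1+b^{-1}$, whence
\begin{equation*}
\bigl(\lambda_\alpha^{-}({}^\xi\gm)-\lambda_\alpha^{+}({}^\xi\gm)\bigr)\lambda_\alpha^{+}(\gm_G)\lambda_\alpha^{-}(\gm_G)=\bigl(b^{-1}-a^{-1}\bigr)ab=a-b=\lambda_\alpha^{+}(\gm_G)-\lambda_\alpha^{-}(\gm_G),
\end{equation*}
which is (ii). For (i) I would use the relation $\lambda_{\alpha'}^{\pm}(s)=\lambda_\alpha^{\mp}(\tau(s))$ for $s\in S_G$ recorded in the proof of Proposition \ref{nalpha} (its two forms being equivalent since $\tau^2=1$ and $(\alpha')'=\alpha$): since $\tau(\gm_G)=-1-\gm_G$ is diagonal,
\begin{equation*}
\lambda_{\alpha'}^{+}(\gm_G)-\lambda_{\alpha'}^{-}(\gm_G)=\lambda_\alpha^{-}(\tau(\gm_G))-\lambda_\alpha^{+}(\tau(\gm_G))=(-1-b)-(-1-a)=a-b,
\end{equation*}
which is (i). Alternatively, (i) is immediate from Proposition \ref{nalpha}: since $\alpha'_{\res}=\alpha_{\res}$ and $(\alpha')'=\alpha$, the defining formulas show $\nn_{\alpha'}=\nn_\alpha$ and $L_{\alpha'}=L_\alpha$ as endomorphisms of this common space, so $\det L_\alpha=\det L_{\alpha'}$.

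There is no real obstacle here; the only points needing care are (a) rephrasing the two structural relations as identities among commuting diagonal endomorphisms of $W$, and (b) observing that over the splitting extension the weight $\lambda_\alpha^{\pm}$ may legitimately be treated both as an additive functional on the non-group elements $1$, $\gm_G$, $\tau(\gm_G)$ and as a character on $S_G$. One should also flag at the outset that $\gm\in S_r$ is precisely what guarantees $\gm_G$ is invertible (so $a,b\neq 0$) and lies in $S_G$ (so the weights apply).
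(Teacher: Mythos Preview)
Your argument is correct. The paper states this lemma without proof, leaving it implicit; your derivation is exactly the natural one the authors presumably have in mind, since it uses precisely the two structural relations they have already set up---$1+\gm_G+\tau(\gm_G)=0$ from Proposition~\ref{About_h_G} and $\lambda_\alpha^-(\tau(s))=\lambda_{\alpha'}^+(s)$ from the proof of Proposition~\ref{nalpha}---together with the definition $\gm_G=({}^\xi\gm-1)^{-1}$.
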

 
 We have
 \begin{equation*}
 \begin{split}
L_0(\gm)^2 &=  \prod_{\alpha \in R_0}  |\lambda_\alpha^+(\gm_G)-\lambda_\alpha^-(\gm_G)| \\
	&= |\det \gm_G|^{2(\dim W-2)} \prod_{\alpha \in R_0} |\lambda_{\alpha}^+({}^\xi \gm)-\lambda_{\alpha}^-({}^\xi \gm)| \\
		&=|\det \gm_G|^{2(\dim W-2)} \prod_{\alpha \in R_0} |\alpha({}^\xi \gm)-1|.\\
\end{split}
 \end{equation*}
 Similarly,
  \begin{equation*}
 \begin{split}
L_0(\gm) &=  \prod_{\alpha \in R^\theta}  |\lambda_\alpha^+(\gm_G)-\lambda_\alpha^-(\gm_G)| \\
	&= |\det \gm_G|^2 \prod_{\alpha \in R^\theta} |\lambda_{\alpha}^-({}^\xi \gm)-\lambda_{\alpha}^+({}^\xi \gm)| \\
		&=|\det \gm_G|^2 \prod_{\alpha \in R^\theta} |\alpha({}^\xi \gm)-1|.\\
\end{split}
 \end{equation*}
 
Orthogonal case: Regrouping gives
 \begin{equation*}
 \begin{split}
 L(\gm) &=|\det \gm_G|^{ \dim W-1} \left(\prod_{\alpha \in R_0} |\alpha({}^\xi \gm)-1| \right)^{\half} \\
 	&= |\det \gm_G|^{\dim W-1} |D_{H^E}(\gm)|.
\end{split} 
 \end{equation*}

Symplectic case: Regrouping gives
 \begin{equation*}
 \begin{split}
 L(\gm) &=|\det \gm_G|^{1+ \dim W} \left(\prod_{\alpha \in R_0} |\alpha({}^\xi \gm)-1| \right)^{\half} \left( \prod_{\alpha \in R^\theta} |\alpha({}^\xi \gm)-1| \right). \\
 	&= |\det \gm_G|^{\dim W+1} |D_{H^E}(\gm)|.
\end{split} 
 \end{equation*}

\begin{cor} Up to a sign, we have
\begin{equation*}
\delta(\gm)=(\det \gm_G)^{\dim W \pm 1}  D_{H^E}(\gm),
\end{equation*}
and 
\begin{equation*}
\Sha_S^*(\omega_N)=\delta_N(m) (\det \gm_G)^{\dim W \pm 1}  D_{H^E}(\gm) \omega_{M/\Delta_S} \wedge \omega_S.
\end{equation*}
Here $\pm 1$ is $+1$ in the symplectic case, and $-1$ in the orthogonal case.
\end{cor}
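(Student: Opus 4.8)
The plan is to read off the Corollary from Proposition~\ref{delta/det} together with the determinant computation just carried out above it.  By Proposition~\ref{delta/det} we have $\delta(\gm)=\pm\det L(\gm)$, where $L=L(\gm)\colon\nn\to\nn$ is the transformation carrying $\mathfrak B_1$ to $\mathfrak B_2$ described in items (1)--(4) of the preceding subsection, so it suffices to evaluate $\det L(\gm)$; the second assertion of the Corollary will then follow by feeding the result into the identity $\Sha_S^*(\omega_N)=\delta_N(m)\,\delta(\gm)\,\omega_{M/\Delta_S}\wedge\omega_S$ of Proposition~\ref{deltas}.

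To compute $\det L(\gm)$ I would use the block-triangular shape of $L$ with respect to the decomposition of $\nn$ into: the image of $\tilde\psi$; the span of the $u(A\xi,0)$ with $A$ in a basis of $\LS_{\GG}$ (and, in the orthogonal case, the vectors $u(A_\alpha\xi,0)$ with $\alpha\in R(\bG,\bS_{\bG})^\theta$); the subspaces $\nn_\alpha$; and the span of the $u(0,\xi Z\xi^*)$ with $Z$ in a basis of $\LS$.  On the first two pieces the diagonal block of $L$ is the identity and so contributes $\pm 1$; on $\nn_\alpha$ it is $L_\alpha$, with $\det L_\alpha=\lambda_\alpha^-(\gm_G)-\lambda_\alpha^+(\gm_G)$ by Proposition~\ref{nalpha} and its symplectic analogue; and on the last piece it is $\zeta$, with $\det\zeta=\det\gm_G$.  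Hence, up to a sign,
\begin{equation*}
\det L(\gm)=\det\gm_G\cdot\prod_{\{\alpha\}}\bigl(\lambda_\alpha^-(\gm_G)-\lambda_\alpha^+(\gm_G)\bigr),
\end{equation*}
the product running over $\theta$-orbits in $R(\bG,\bS_{\bG})_0$ in the orthogonal case and over $\theta$-orbits in $R(\bG,\bS_{\bG})$ in the symplectic case.

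Finally I would evaluate this product exactly as in the displayed regrouping computations preceding the Corollary, but tracking everything up to an overall sign in $F^\times$ rather than passing to absolute values.  Three ingredients enter: the $\theta$-symmetry $\lambda_\alpha^+-\lambda_\alpha^-=\lambda_{\alpha'}^+-\lambda_{\alpha'}^-$, which lets one write a product over $\theta$-orbits of size $2$ as a square root of the corresponding product over roots; the factorization $\lambda_\alpha^+(\gm_G)-\lambda_\alpha^-(\gm_G)=\bigl(\lambda_\alpha^-({}^\xi\gm)-\lambda_\alpha^+({}^\xi\gm)\bigr)\lambda_\alpha^+(\gm_G)\lambda_\alpha^-(\gm_G)$, which separates the factors $\alpha({}^\xi\gm)-1$ assembling into $D_{H^E}(\gm)$ from the powers of $\det\gm_G$; and the two identities $\prod_{\alpha\in R_0}\lambda_\alpha^+(t)\lambda_\alpha^-(t)=(\det t)^{2(\dim W-2)}$ and, in the symplectic case, $\prod_{\alpha\in R^\theta}\lambda_\alpha^+(t)\lambda_\alpha^-(t)=(\det t)^2$, which collect those powers.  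This produces $\det L(\gm)=\pm(\det\gm_G)^{\dim W-1}D_{H^E}(\gm)$ in the orthogonal case and $\det L(\gm)=\pm(\det\gm_G)^{\dim W+1}D_{H^E}(\gm)$ in the symplectic case; combined with the first display this gives the first formula of the Corollary, and the second then follows as noted.  There is nothing genuinely hard left at this stage: the one point demanding care is the case-dependent combinatorics of which root directions are grouped into the blocks $\nn_\alpha$ versus left fixed by $L$, and hence which power of $\det\gm_G$ survives---this is precisely what the root-restriction lemma and the two product identities of the previous subsection were arranged to control.
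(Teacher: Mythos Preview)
Your proposal is correct and follows essentially the same route as the paper: the Corollary is immediate from Proposition~\ref{delta/det} ($\delta=\pm\det L$), the Proposition giving formula~(\ref{bigprod}) for $|\det L|$, the regrouping computations just above the Corollary (which you correctly note go through verbatim without absolute values, at the cost of an overall sign), and then Proposition~\ref{deltas} for the second assertion. Your explicit description of the block-triangular structure behind (\ref{bigprod}) is a bit more detailed than what the paper writes out, but the argument is the same.
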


\section{The Integration Formulas} \label{LastSection}

\subsection{Haar measure}

Let $dn$ be a Haar measure on $N$.  Since $N^S$ is open in $N$ (by Corollary \ref{immersion}), we may restrict $dn$ to $N^S$.

By Proposition \ref{difffibre}, we obtain

\begin{prop} Let $f \in L^1(N^S,dn)$.  Then $(f\circ\Sha_{S}) \in L^1(M/\Delta_S \times S_{r}, \Sha_{S}^{*}(dn))$ and 
\begin{equation*}
\int_{N^S} f(n) dn=|W_H(S)|^{-1}  \int_{M/\Delta_S \times S_{r}}  (f\circ\Sha_{S})\Sha_{S}^{*}(dn).
\end{equation*}
\end{prop}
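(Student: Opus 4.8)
The plan is to obtain the formula as a direct application of the fibre integration formula, Proposition \ref{difffibre}, to the surjection $\Sha_S : M/\Delta_S \times S_r \to N^S$ of (\ref{real_Sha}), taking $d = |W_H(S)|$ and choosing $\omega$ to be any top form on $N^S$ with $|\omega|$ equal to the restriction of $dn$ (this restriction is legitimate because $N^S$ is open in $N$ by Corollary \ref{immersion}). Once the hypotheses of Proposition \ref{difffibre} are checked, its conclusion reads exactly $\int_{N^S} f(n)\, dn = |W_H(S)|^{-1} \int_{M/\Delta_S \times S_r} (f \circ \Sha_S)\, \Sha_S^*(dn)$, together with the integrability statement $f \circ \Sha_S \in L^1(M/\Delta_S \times S_r, \Sha_S^*(dn))$, since $\Sha_S^*(dn) = |\Sha_S^*(\omega)|$ by definition of the pullback measure.

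So I only need to verify the two hypotheses of Proposition \ref{difffibre}. For the first -- that $d\Sha_S$ is everywhere nondegenerate -- I would invoke Corollary \ref{immersion}: $\Sha_S$ is an immersion, and since $\dim(M/\Delta_S \times S_r) = \dim N$ by Corollary \ref{dimension_formula}, an immersion between manifolds of equal dimension has invertible differential at every point. Equivalently, by Proposition \ref{deltas} the Jacobian of $\Sha_S$ with respect to the calibrated forms equals $\delta_N(m)\delta(\gm)$, and this is nonzero because $\delta_N(m) = \det(\Ad(m);\nn) \neq 0$ while $\delta(\gm) \neq 0$ for $\gm \in S_r$ by the computation of Section \ref{routes}. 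For the second hypothesis -- that each point of $N^S$ has precisely $|W_H(S)|$ preimages -- recall from the description of the fibres of $\Sha_S$ in Section \ref{alg_theory} that the fibre through $m \times \gm$ is the $W_H(S)$-orbit $\{ m\Delta(w) \times w^{-1}\gm w \mid w \in W_H(S) \}$, so it suffices to show this action of $W_H(S)$ on $M/\Delta_S \times S_r$ is free. By Lemma \ref{basic xi prop} we have $N_H(S) = H^{E^\circ} N_{H^E}(S)$ and $Z_H(S) = H^{E^\circ} Z_{H^E}(S)$, so $W_H(S)$ is identified with $N_{H^E}(S)/Z_{H^E}(S)$, acting on $S$ as the Weyl group of the maximal torus $S$ of $H^E$. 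If $w$ fixes $m \times \gm$ then $w^{-1}\gm w = \gm$; but $\gm \in S_r$ is regular in $H^E$, i.e. $\alpha(\gm) \neq 1$ for every root $\alpha$ of $S$ in $H^E$, and a regular element of a torus has trivial stabilizer in the Weyl group, so $w = 1$. Hence the action is free and every fibre of $\Sha_S$ has exactly $|W_H(S)|$ points.

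With both hypotheses established, Proposition \ref{difffibre} yields the claimed integrability and the stated identity. The only step requiring genuine argument beyond assembling earlier results is the freeness of the $W_H(S)$-action on $S_r$ -- that is, reading off from the definition of $S_{\reg}$ that a regular element of $S$ has trivial stabilizer in $W_H(S) \cong W_{H^E}(S)$; everything else is a transcription of Corollaries \ref{dimension_formula} and \ref{immersion} and of the fibre description into the hypotheses of Proposition \ref{difffibre}, so I expect this to be the shortest proof in the paper.
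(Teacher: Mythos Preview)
Your proposal is correct and follows exactly the paper's approach: the paper simply writes ``By Proposition \ref{difffibre}, we obtain'' and states the result. You have supplied the verification of the hypotheses---in particular the freeness of the $W_H(S)$-action on $S_r$---that the paper leaves implicit, so your argument is if anything more complete than the original.
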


Recall the set $N_{r}=\bigcup_S N^S$. By Theorem \ref{Theorem 1}, the set of $\ol F$-points $\bN_{r}$ is a nonempty Zariski subset of the affine space $\bN$.
It follows that the set of $F$-points $N_{r}$ has negligible complement in $N$.  We obtain

\begin{prop} Let $f \in L^1(N,dn)$.  Then $f\circ\Sha_{S} \in L^1(M/\Delta_S \times S_{r}, \Sha_{S}^{*}(dn))$ for all $(\dim W)$-maximal tori $S$ of $H$, and
\begin{equation*}
\int_{N} f(n) dn=\sum_S |W_H(S)|^{-1}  \int_{M/\Delta_S \times S_{r}}  (f\circ\Sha_{S})\Sha_{S}^{*}(dn).
\end{equation*}
\end{prop}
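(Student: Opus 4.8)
The plan is to reduce this global statement to the preceding proposition, which handles a single $N^S$, together with the facts that $N_r=\bigcup_S N^S$ exhausts $N$ up to a null set and that this union is disjoint once taken over $H$-conjugacy classes. First I would observe that, by Corollary \ref{immersion}, each $N^S$ is open in $N$, so the restriction of $f\in L^1(N,dn)$ to $N^S$ lies in $L^1(N^S,dn)$; the preceding proposition then yields $f\circ\Sha_S\in L^1(M/\Delta_S\times S_r,\Sha_S^*(dn))$ and the identity $\int_{N^S}f(n)\,dn=|W_H(S)|^{-1}\int_{M/\Delta_S\times S_r}(f\circ\Sha_S)\,\Sha_S^*(dn)$ for each maximal $k$-torus $S$, where $k=\dim W$. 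Note also that $N^S$ depends only on the $H$-conjugacy class of $S$, since $S'=hSh^{-1}$ forces $S'_r=hS_rh^{-1}$. So the remaining task is to break $\int_N f\,dn$ into the sum of the $\int_{N^S}f\,dn$ over the (finitely many) classes.

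The key step is to check that the $N^S$ are pairwise disjoint as $S$ runs over representatives of the $H$-conjugacy classes of maximal $k$-tori, and for this I would show that a single element $\gm\in S_r$ determines $S$ up to $H$-conjugacy. Since $\gm$ fixes $E$ pointwise and, because $\gm\in S_r\subseteq S_{-1}$, the map $\gm-1|_{E^\circ}$ is invertible, we get $X^\gm=E$; and since $\gm\in S_{\reg}$ is a regular element of $H^E$, we get $S=Z_{H^E}(\gm)^\circ$ (recalling that $S$ is a maximal torus in $H^E$). Now if $n\in N^S\cap N^{S'}$, then $\Norm(n)$ is $H$-conjugate to some $\gm\in S_r$ and to some $\gm'\in S'_r$, say $h\gm h^{-1}=\gm'$; conjugating by $h$ sends $E=X^\gm$ to $E'=X^{\gm'}$, hence $H^E$ to $H^{E'}$, hence $S=Z_{H^E}(\gm)^\circ$ to $Z_{H^{E'}}(\gm')^\circ=S'$, so $S$ and $S'$ are $H$-conjugate. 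I would also record that over the local field $F$ there are only finitely many $H$-conjugacy classes of maximal tori, hence of maximal $k$-tori, so all the sums in sight are finite.

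The last step is to assemble the measure-theoretic pieces. By Theorem \ref{Theorem 1}, $\bN_r$ is a nonempty Zariski-open subset of the affine space $\bN$, so $N\setminus N_r$ lies in the zero locus of a nonzero polynomial and is $dn$-null; hence $\int_N f(n)\,dn=\int_{N_r}f(n)\,dn$. Using the disjointness from the previous step, and the bound $\sum_S\int_{N^S}|f|\,dn\le\int_N|f|\,dn<\infty$ to justify integrating term by term, we get $\int_{N_r}f(n)\,dn=\sum_S\int_{N^S}f(n)\,dn$; substituting the per-torus identity from the first step gives the asserted formula. I expect the disjointness claim — recovering $S$ up to $H$-conjugacy from an element of $S_r$ — to be the only genuinely new point; everything else is routine bookkeeping with open sets, null sets, and finite sums.
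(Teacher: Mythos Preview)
Your proof is correct and follows the same approach as the paper: reduce to the per-torus proposition, use that $\bN_r$ is nonempty Zariski open so that $N\setminus N_r$ is $dn$-null, and sum over $H$-conjugacy classes of maximal $k$-tori. The paper's argument is considerably terser---it leaves the disjointness of the $N^S$ over conjugacy class representatives and the finiteness of the sum implicit---whereas you supply a careful justification by recovering $S$ (via $E=X^\gm$ and $S=Z_{H^E}(\gm)^\circ$) from any $\gm\in S_r$.
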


The sum is taken over conjugacy classes of $(\dim W)$-maximal tori $S$ in $H$.  

By Proposition \ref{deltas}, our integration formula takes the form:

\begin{prop} Let $f \in L^1(N,dn)$ with $dn$ a Haar measure on $N$.  Then
\begin{equation*}
\int_N f(n) dn=\sum_S |W_H(S)|^{-1} \int_S |\delta(\gm)| \int_{M/\Delta_S} f(\Ad(m) n_S(\gm))|\delta_N(m)| \frac{dm}{dz} d \gm.
\end{equation*}
\end{prop}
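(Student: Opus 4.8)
The plan is to combine the two preceding Propositions of Section~\ref{LastSection} with the explicit Jacobian of Proposition~\ref{deltas}. The preceding Proposition already expresses $\int_N f\,dn$ as $\sum_S|W_H(S)|^{-1}\int_{M/\Delta_S\times S_r}(f\circ\Sha_S)\,\Sha_S^*(dn)$, so the only work is to unwind the pullback measure $\Sha_S^*(dn)$ and perform a Fubini argument. First I would take $dn=|\omega_N|$ for the invariant top form $\omega_N$ fixed in Section~\ref{local_now} (any Haar measure on $N$ is a positive multiple of this one, with the other measures rescaled compatibly). By Corollary~\ref{dimension_formula} and Corollary~\ref{immersion} the map $\Sha_S\colon M/\Delta_S\times S_r\to N^S$ is a local diffeomorphism onto the open set $N^S$, so its pullback of a measure is computed at the level of forms: $\Sha_S^*(dn)=|\Sha_S^*(\omega_N)|$.

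Next I would apply Proposition~\ref{deltas}, which gives $\Sha_S^*(\omega_N)=\delta_N(m)\,\delta(\gm)\,\omega_{M/\Delta_S}\wedge\omega_S$ on $M/\Delta_S\times S_r$. Taking absolute values, and using that the measure attached to $\omega_{M/\Delta_S}\wedge\omega_S$ is the product of the associated measures $\frac{dm}{dz}=|\omega_{M/\Delta_S}|$ and $d\gm=|\omega_S|$, I obtain
\[
\Sha_S^*(dn)=|\delta_N(m)|\,|\delta(\gm)|\,\frac{dm}{dz}\,d\gm .
\]
Since $|\delta_N(m)\delta(\gm)|$ factors as $|\delta_N(m)|\cdot|\delta(\gm)|$, this is a \emph{product} measure on $M/\Delta_S\times S_r$, and $f\circ\Sha_S$ lies in $L^1$ for it by the preceding Proposition. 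Hence Fubini's theorem applies, and together with $\Sha_S(m\times\gm)=\Ad(m)n_S(\gm)$ it yields
\[
\int_{M/\Delta_S\times S_r}(f\circ\Sha_S)\,\Sha_S^*(dn)=\int_{S_r}|\delta(\gm)|\int_{M/\Delta_S}f(\Ad(m)n_S(\gm))\,|\delta_N(m)|\,\frac{dm}{dz}\,d\gm .
\]

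Finally I would enlarge the inner domain from $S_r$ to $S$. By definition $S_r=S_{\reg}\cap S_{-1}$ is the locus in $S$ where finitely many regular functions are nonzero --- the $\alpha-1$ for $\alpha\in R(\bH^{\bE},\bS)$ together with $\det(\gm\pm 1\,;\,\bE^\circ)$ --- and none of these vanishes identically on the irreducible variety $S$; hence $S\smallsetminus S_r$ is a proper Zariski-closed subset, which over a local field has $d\gm$-measure zero. Therefore $\int_{S_r}=\int_S$, and restoring the sum over the $H$-conjugacy classes of $(\dim W)$-maximal tori together with the factors $|W_H(S)|^{-1}$ gives the asserted identity. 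There is no genuinely hard step here; the point requiring care is the bookkeeping in the second paragraph --- that the pullback of $dn$ is exactly the product of the associated measures $\frac{dm}{dz}$ and $d\gm$ weighted by the Jacobian of Proposition~\ref{deltas} --- together with the (standard) fact that a proper Zariski-closed subset of a torus over a local field is null for Haar measure.
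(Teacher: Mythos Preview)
Your proposal is correct and follows exactly the paper's approach: the paper simply writes ``By Proposition~\ref{deltas}, our integration formula takes the form'' and states the result, so the entire content is to substitute $\Sha_S^*(\omega_N)=\delta_N(m)\delta(\gm)\,\omega_{M/\Delta_S}\wedge\omega_S$ into the preceding Proposition. You have spelled out the Fubini step and the passage from $S_r$ to $S$ that the paper leaves implicit, but these are routine and your justifications are accurate.
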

Here $|\delta(\gm)|=|\det \gm_G|^{\dim W \pm 1} | D_{H^E}(\gm) |$.
Again, the sign $\pm$ is $+$ in the symplectic case, and $-$ in the orthogonal case.
  
\subsection{ $\Ad(M)$-invariant version} \label{Ad-invt}

The factor $\delta_N$ in the above formula suggests that we replace $dn$ with an $\Ad(M)$-invariant measure on $N$.   Such measures arise in the theory of intertwining operators (see \cite{S92}, \cite{GS98}).

\begin{prop} For $n=n(\xi,\eta) \in N'$, put $\phi(n(\xi,\eta))=|\delta_N(m(\eta \ups,1))|^{-\half}$.  Then 
\begin{enumerate}
\item For all $m \in M$ and $n \in N'$ we have $\phi(\Ad(m)(n))=\phi(n) |\delta_N(m)|^{-1}$.
\item $d_mn=\phi(n) dn$ is an $\Ad(M)$-invariant measure on $N$.
\end{enumerate}
\end{prop}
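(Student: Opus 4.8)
The plan is to get part (i) by a short bookkeeping of adjoints, and then to read off part (ii) as a matching of modular factors.

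First I would record two facts about $\delta_N$. Since $\delta_N=\det\circ\Ad$ is a composite of homomorphisms it is a homomorphism $M\to F^{\times}$; write $\nu(g)=\delta_N(m(g,1))$ for its restriction to the $\GL(W)$-factor. This $\nu$ is a character of $\GL(W)$, hence a power $(\det)^{c}$ of the determinant, so in particular $\nu(g^{*})=\nu(g)$. On the $H$-factor, $\Ad(m(1,h))$ sends $u(A,B)\mapsto u(Ah^{-1},B)$, so $\delta_N(m(1,h))=\det(h)^{-\dim W}$; because $\dim W$ is even and $H$ is orthogonal or symplectic this equals $1$, and therefore $\delta_N(m(g,h))=\nu(g)$ for every $m(g,h)\in M$. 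This is the only place the parity hypothesis on $\dim W$ is used.

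For part (i), I would start from $\Ad(m(g,h))\,n(\xi,\eta)=n(g\xi h^{-1},g\eta g^{*})$, so that $\phi(\Ad(m(g,h))n)=|\nu(g\eta g^{*}\ups)|^{-\half}$. The one genuine computation is to factor $g\eta g^{*}\ups=g\cdot(\eta\ups)\cdot(\ups^{-1}g^{*}\ups)$ as a product of three elements of $\GL(W)$ and to note that $\nu(\ups^{-1}g^{*}\ups)=\det(\ups^{-1}g^{*}\ups)^{c}=\det(g^{*})^{c}=\nu(g)$; multiplicativity of $\nu$ then gives $\nu(g\eta g^{*}\ups)=\nu(g)^{2}\,\nu(\eta\ups)$, and applying $|\cdot|^{-\half}$ yields $\phi(\Ad(m)n)=|\nu(g)|^{-1}\phi(n)=|\delta_N(m)|^{-1}\phi(n)$. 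A point worth flagging: here $\ups$ must be the fixed map $(\xi\xi^{*})^{-1}$ attached to the fixed $\xi$ of Section \ref{Section_Section}, not the analogous map built from the variable $\xi$-coordinate of $n$; with that latter choice $\phi$ would be outright $\Ad(M)$-invariant and part (i) would be false.

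For part (ii), I would combine part (i) with the transformation law $\Ad(m)^{*}\omega_N=\delta_N(m)\,\omega_N$, equivalently $\int_N f(\Ad(m)n)\,dn=|\delta_N(m)|^{-1}\int_N f(n)\,dn$. Substituting $n\mapsto\Ad(m_0^{-1})n$ into $\int_N f(\Ad(m_0)n)\,\phi(n)\,dn$ and inserting part (i) makes the resulting factors $|\delta_N(m_0)|^{-1}$ and $|\delta_N(m_0)|$ cancel, leaving $\int_N f(n)\,\phi(n)\,dn$; hence $\phi(n)\,dn$ is invariant under each $\Ad(m_0)$. Since $\phi$ is defined on the open set $N'$, whose complement $\{\det\eta=0\}$ is $dn$-null and $\Ad(M)$-stable, $d_mn=\phi(n)\,dn$ is a well-defined $\Ad(M)$-invariant Borel measure on $N$. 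I do not anticipate a real obstacle here; the only things needing care are the consistent handling of the adjoint $*$ across the pairing of $W$ and $W'$ (so that $\ups^{-1}g^{*}\ups$ is genuinely read as an element of $\GL(W)$), and not forgetting that the parity of $\dim W$ is what kills $\delta_N$ on $H$.
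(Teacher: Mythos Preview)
Your argument is correct. The paper actually states this proposition without proof, so there is nothing to compare against; you have supplied a valid proof where the paper left the verification to the reader. Two small remarks: your early aside ``$\nu(g^{*})=\nu(g)$'' is ill-typed since $g^{*}\in\GL(W')$, but you handle this properly later via the conjugation $\ups^{-1}g^{*}\ups\in\GL(W)$; and your claim that the parity of $\dim W$ is ``only'' used to kill $\delta_N$ on $H$ is true for this proposition but not for the paper as a whole.
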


One computes in the orthogonal case that  
\begin{equation*}
\delta_N(m(g,h))=(\det g)^{\dim W+ \dim X -1},
\end{equation*}
and in the symplectic case that
\begin{equation*}
\delta_N(m(g,h))=(\det g)^{\dim W+ \dim X +1}.
\end{equation*}
(See Proposition 1 of \cite{IFS}.) 
 
Making this substitution for $dn$ we obtain
\begin{cor}
\begin{equation*}
\int_{N}f(n)d_{m}n=\sum_{S}|W_{H}(S)|^{-1}\int_{S} |\det L(\gm)| |\delta_N(\gm_G)|^{-\half}  \int_{M/S_{\Delta}}f(\Ad(m)n_{S}(\gamma))\frac{dm}{dz}d\gamma.
\end{equation*}
\end{cor}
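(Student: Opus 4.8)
The plan is to derive this as a direct repackaging of the Haar-measure integration formula proved just above,
\begin{equation*}
\int_N f(n)\, dn = \sum_S |W_H(S)|^{-1} \int_S |\delta(\gm)| \int_{M/\Delta_S} f(\Ad(m) n_S(\gm))\, |\delta_N(m)|\, \frac{dm}{dz}\, d\gm,
\end{equation*}
via the substitution $d_m n = \phi(n)\, dn$ with $\phi(n(\xi,\eta)) = |\delta_N(m(\eta\ups,1))|^{-\half}$, as in the proposition introducing the $\Ad(M)$-invariant measure. First I would observe that the hypothesis $f \in L^1(N,d_m n)$ says precisely that $f\phi \in L^1(N,dn)$, so the Haar-measure formula may be applied verbatim to the function $f\phi$ in place of $f$. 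Since $\int_N f(n)\,d_m n = \int_N f(n)\phi(n)\,dn$, this yields
\begin{equation*}
\int_N f(n)\, d_m n = \sum_S |W_H(S)|^{-1} \int_S |\delta(\gm)| \int_{M/\Delta_S} f(\Ad(m) n_S(\gm))\, \phi(\Ad(m) n_S(\gm))\, |\delta_N(m)|\, \frac{dm}{dz}\, d\gm.
\end{equation*}

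Next I would invoke the first property in that proposition, the transformation rule $\phi(\Ad(m)(n)) = \phi(n)\,|\delta_N(m)|^{-1}$. Applied with $n = n_S(\gm)$, it collapses $\phi(\Ad(m)n_S(\gm))\,|\delta_N(m)|$ to the single constant $\phi(n_S(\gm))$, which then pulls out of the $M/\Delta_S$-integral:
\begin{equation*}
\int_N f(n)\, d_m n = \sum_S |W_H(S)|^{-1} \int_S |\delta(\gm)|\, \phi(n_S(\gm)) \int_{M/\Delta_S} f(\Ad(m) n_S(\gm))\, \frac{dm}{dz}\, d\gm.
\end{equation*}
It remains only to evaluate the two scalar factors in front of the inner integral. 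By construction $n_S(\gm) = n(\xi, \eta_S(\gm))$ with $\eta_S(\gm) = \gm_G\,\ups^{-1}$, hence $\eta_S(\gm)\,\ups = \gm_G$, and therefore $\phi(n_S(\gm)) = |\delta_N(m(\gm_G,1))|^{-\half} = |\delta_N(\gm_G)|^{-\half}$. Also $|\delta(\gm)| = |\det L(\gm)|$ by Proposition~\ref{delta/det}, where $\delta = \pm\det L$. Substituting these two identities (and writing $S_\Delta$ for $\Delta_S$ as in the statement) produces exactly the claimed formula.

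I expect no genuine obstacle here; the argument is bookkeeping built entirely on results already established. The two points deserving a moment's care are the measure-theoretic equivalence $f \in L^1(N,d_m n) \Leftrightarrow f\phi \in L^1(N,dn)$, which is immediate from the definition of $d_m n$, and the identity $\phi(n_S(\gm)) = |\delta_N(\gm_G)|^{-\half}$ — this last evaluation is precisely the mechanism that converts the Haar-measure integration formula into its $\Ad(M)$-invariant counterpart, and it relies on the transformation law for $\phi$ holding pointwise so that the Jacobian factor $|\delta_N(m)|$ cancels identically rather than merely up to a constant.
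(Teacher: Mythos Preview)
Your proposal is correct and follows exactly the approach the paper intends: the paper's entire argument is the phrase ``Making this substitution for $dn$ we obtain,'' and you have faithfully expanded that into the computation $\phi(\Ad(m)n_S(\gm))|\delta_N(m)| = \phi(n_S(\gm)) = |\delta_N(\gm_G)|^{-\half}$ together with $|\delta(\gm)| = |\det L(\gm)|$.
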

Here we write $\delta_N(\gm_G)$ for $\delta_N(m(\gm_G,1))$.  Now we have
\begin{equation*}
\begin{split}
  |\det L(\gm)||\delta_N(\gm_G)|^{-\half} &=  |\det \gm_G|^{\pm \half+\half (\dim W-\dim X)}|D_{H^E}(\gm)|\\
  								&=|D_{H_E}(\gm)|^\half |D^\theta_G(\gm_G)|^\half |\det(\gm-1;E^\circ)|^{\half \dim E}, \\
  \end{split}
  \end{equation*}
  with the last equality following from Proposition 14 of \cite{IFS}.  To clarify, this is indeed a uniform formula for both the orthogonal and symmetric cases.
  
  Theorem \ref{intro_thm}, stated in the Introduction, follows from this.
  
\bibliographystyle{plain}

\end{document}